\newcommand{\zetp}{\mathbb{Z}_{(p)}}
\newcommand{\Q}{\mathbb{Q}}
\newcommand{\Z}{\mathbb{Z}}
\newcommand{\C}{\mathbb{C}}
\newcommand{\five}{~~~~~}
\newcommand{\fift}{~~~~~~~~~~~~~~~}
\newcommand{\mm}[1]{\mathop{\mathrm{#1}}}
\newcommand{\be}[2]{\beta_{#1}^{(#2)}}
\newcommand{\ta}[1]{\tanh^{#1}{(t/2)}}
\newcommand{\co}[2]{D_{#1}^{(#2)}}
\newcommand{\copoly}[4]{\!^{#1}\!D_{#2}^{(#3)}(#4)}
\newcommand{\copasym}[4]{\!^{#1}{\mathfrak{D}}_{#2}^{(#3)}(#4)}
\newcommand{\cosym}[3]{{\mathfrak{D}}_{#1}^{(#2)}(#3)}
\newcommand{\stirf}[2]{\left[#1 \atop #2\right]}
\newcommand{\stirs}[2]{\left\{#1 \atop #2\right\}}
\newcommand{\dnkij}{d_{n}^{(k)}(i,j)}
\newcommand{\ordp}{{\mm{ord}}_{p}}
\newcommand{\dv}[2]{\frac{d^{#1}}{{d#2}^{#1}}}
\newcommand{\pdv}[2]{\frac{{\partial}^{#1}}{\partial {#2}^{#1}}}
\def\mapstofill@{%
	\arrowfill@{\mapstochar\relbar}\relbar\rightarrow}
\newcommand*\xmapsto[2][]{%
	\ext@arrow 0395\mapstofill@{#1}{#2}}
\newtheorem{theorem}{Theorem}[section]
\newtheorem{definition}[theorem]{Definition}
\newtheorem{lemma}[theorem]{Lemma}
\newtheorem{proposition}[theorem]{Proposition}
\newtheorem{corollary}[theorem]{Corollary}
\theoremstyle{definition}
\newtheorem{remark}[theorem]{Remark}
\newtheorem{example}[theorem]{Example}
\def\underbrace@#1#2{\vtop {\m@th \ialign {##\crcr $\hfil #1{#2}\hfil $\crcr \noalign {\kern 3\p@ \nointerlineskip }\upbracefill \crcr \noalign {\kern 3\p@ }}}}
\def\overbrace@#1#2{\vbox {\m@th \ialign {##\crcr \noalign {\kern 3\p@ }\downbracefill \crcr \noalign {\kern 3\p@ \nointerlineskip }$\hfil #1 {#2}\hfil $\crcr }}}
\def\underbrace#1{%
	\mathop{\mathchoice{\underbrace@{\displaystyle}{#1}}
		{\underbrace@{\textstyle}{#1}}
		{\underbrace@{\scriptstyle}{#1}}
		{\underbrace@{\scriptscriptstyle}{#1}}}\limits
}
\def\overbrace#1{%
	\mathop{\mathchoice{\overbrace@{\displaystyle}{#1}}
		{\overbrace@{\textstyle}{#1}}
		{\overbrace@{\scriptstyle}{#1}}
		{\overbrace@{\scriptscriptstyle}{#1}}}\limits
}
\begin{document}
	
	\title{On some properties of polycosecant numbers and polycotangent numbers}
	\author{Kyosuke Nishibiro}
	\date{}
	\maketitle
	
	\setcounter{section}{0}
	
	\begin{abstract}
		Polycosecant numbers and polycotangent numbers are introduced as level two analogues of poly-Bernoulli numbers. It is shown that polycosecant numbers and polycotangent numbers satisfy many formulas similar to those of poly-Bernoulli numbers. However, there is much unknown about polycotangent numbers. For example, the zeta function interpolating them at non-positive integers has not yet been constructed. In this paper, we show some algebraic properties of polycosecant numbers and polycotangent numbers. Also, we generalize duality formulas for polycosecant numbers which essentially include those for polycotangent numbers.
	\end{abstract}
	
	\section{Introduction.}
	
	For $k\in\Z$, two types of poly-Bernoulli numbers $\{B_n^{(k)}\}$ and $\{C_n^{(k)}\}$ are defined by Kaneko as follows:
	\begin{align}
		\frac{\mm{Li}_k(1-e^{-t})}{1-e^{-t}}&=\sum_{n=0}^{\infty}B_{n}^{(k)}\frac{t^n}{n!},\\
		\frac{\mm{Li}_k(1-e^{-t})}{e^t-1}&=\sum_{n=0}^{\infty}C_{n}^{(k)}\frac{t^n}{n!},
	\end{align}
	where 
	\begin{align*}
		{\mm{Li}}_k(z)=\sum_{n=1}^{\infty}\frac{z^n}{n^k}~~(|z|<1)
	\end{align*}
	is the polylogarithm function~(see Kaneko\cite{K1}, Arakawa-Kaneko\cite{AK1}, and Arakawa-Ibukiyama-Kaneko\cite{AIK}). Since ${\mm{Li}}_1(z)=-\log{(1-z)}$, $C_n^{(1)}=(-1)^nB_n^{(1)}$ coincides with the ordinary Bernoulli number $B_n$.
	
	In \cite{KPT}, as the level two analogue of poly-Bernoulli numbers, polycosecant numbers $\{\co{n}{k}\}$ are defined by Kaneko, Pallewatta and Tsumura as follows:
	\begin{align}
		\frac{\mm{A}_{k}(\ta{})}{\sinh{t}} &= \sum_{n=0}^{\infty}\co{n}{k} \frac{t^n}{n!}, \label{genecosecant}
	\end{align}
	where
	\begin{align*}
		{\mm{A}}_k(z)=2\sum_{n=0}^{\infty}\frac{z^{2n+1}}{(2n+1)^k}={\mm{Li}}_k(z)-{\mm{Li}}_k(-z)~~(|z|<1)
	\end{align*}
	is the polylogarithm function of level two. Note that Sasaki first considered \eqref{genecosecant} in \cite{Sasaki}. Since ${\mm{A}}_1(z)=2\tanh^{-1}(z)$, $\co{n}{1}$ coincides with the ordinary cosecant number $D_n$ (see \cite{No}). Also, as its relatives, polycotangent numbers $\{\be{n}{k}\}$ are defined by Kaneko, Komori and Tsumura as follows:
	\begin{align}
		\frac{\mm{A}_{k}(\ta{})}{\tanh{t}} &= \sum_{n=0}^{\infty}\be{n}{k}\frac{t^n}{n!}.
	\end{align}
	
	\begin{remark}
		Because each generating function is an even function, we have $\co{2n-1}{k}=0$ and $\be{2n-1}{k}=0$ for $n\in\Z_{\geq1}$ and $k\in\Z$.
	\end{remark}
	
	A calculation shows that
	\begin{align}
		\be{2n}{k}&=\sum_{i=0}^{n} \dbinom{2n}{2i}D_{2i}^{(k)}\label{cotacose},\\
		\co{2n}{k}&=\sum_{i=0}^{n} \dbinom{2n}{2i}E_{2n-2i}\be{2i}{k}
	\end{align}
	hold, where $E_n$ is the Euler number defined by
	\[
	\frac{1}{\cosh{t}}=\sum_{n=0}^{\infty} E_n\frac{t^n}{n!}.
	\]  
	
	It is known that the Bernoulli numbers satisfy the following congruence relations, which are called Kummer congruences. Here, $\varphi$ is the Euler's totient function.
	
	\begin{theorem}
		Let $p$ be an odd prime number. For $m,n$ and $N\in\Z_{\geq1}$ with $m\equiv n \bmod \varphi(p^N)$ and $(p-1)\nmid n$, we have
		\[
		(1-p^{m-1})\frac{B_{m}}{m}\equiv(1-p^{n-1})\frac{B_{n}}{n} \bmod{p^N}.
		\]
	\end{theorem}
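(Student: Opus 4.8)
The plan is to reinterpret the quantity $(1-p^{m-1})\frac{B_m}{m}$ as a $p$-adic integral and to read the congruence off from the $p$-adic continuity of the power map on units. A preliminary remark: since $(p-1)\mid\varphi(p^N)$, the hypothesis $m\equiv n\bmod\varphi(p^N)$ forces $m\equiv n\bmod(p-1)$, so $(p-1)\nmid m$ holds as well; by the von Staudt--Clausen theorem $B_m$ and $B_n$ are then $p$-integral, and in fact $\frac{B_m}{m},\frac{B_n}{n}\in\zp$, so the asserted congruence modulo $p^N$ is meaningful. The elementary heart of the matter is a statement about exponents. Writing each $x\in\zp^{\times}$ uniquely as $x=\omega(x)\brkt{x}$, where $\omega(x)$ is the Teichm\"uller representative (a $(p-1)$-st root of unity) and $\brkt{x}\in1+p\zp$, the hypothesis yields $\omega(x)^{m-1}=\omega(x)^{n-1}$ exactly (as $m\equiv n\bmod(p-1)$) and $\brkt{x}^{m-1}\equiv\brkt{x}^{n-1}\bmod p^N$ (as $m\equiv n\bmod p^{N-1}$, and here we use $p$ odd so that $\brkt{x}^{p^{N-1}}\in1+p^N\zp$). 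Hence $x^{m-1}\equiv x^{n-1}\bmod p^N$ for every unit $x$.

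The main work is to manufacture a bounded, that is $\zp$-valued, measure $\mu$ on $\zp^{\times}$ whose moments recover our numbers:
\[
\int_{\zp^{\times}}x^{m-1}\,d\mu(x)=(1-p^{m-1})\frac{B_m}{m}\qquad\big((p-1)\nmid m\big).
\]
The starting point is the Bernoulli distribution attached to $B_1$, which assigns to the basic open set $a+p^{r}\zp$ (with $0\le a<p^{r}$) the value $\frac{a}{p^{r}}-\frac12$. This is only a distribution, not a measure, because the values $\frac{a}{p^{r}}$ are $p$-adically unbounded. The standard remedy is to regularize by an auxiliary integer $c$ coprime to $p$: the combination obtained by subtracting the $c$-scaled distribution cancels the unbounded part and defines a genuine $\zp$-valued measure $\mu_c$, whose $(m-1)$-st moment computes to $(1-c^{m})(1-p^{m-1})\frac{B_m}{m}$ up to normalization; crucially, the Euler factor $(1-p^{m-1})$ appears precisely because we integrate over $\zp^{\times}$ rather than over $\zp$, that is, because the contribution of $p\zp$ has been removed.

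With such a measure in hand the theorem is immediate: integrating the pointwise congruence $x^{m-1}\equiv x^{n-1}\bmod p^N$ against the $\zp$-valued measure $\mu$ preserves it, so
\[
\int_{\zp^{\times}}x^{m-1}\,d\mu(x)\equiv\int_{\zp^{\times}}x^{n-1}\,d\mu(x)\bmod p^N,
\]
which is exactly the claimed relation. The auxiliary factor $(1-c^{m})$ is dealt with either by choosing $c$ so that $1-c^{m}$ is a $p$-adic unit and dividing it out (noting $1-c^{m}\equiv1-c^{n}\bmod p^N$ by the same exponent congruence, since $c^{m}\equiv c^{n}\bmod p^N$), or by passing to the limit in $c$.

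I expect the genuine obstacle to be the construction and verification of the bounded measure: one must check the distribution (additivity) relations for the regularized Bernoulli data, prove that the $c$-regularization really does bound the values $p$-adically, and verify the interpolation formula for the moments together with the correct emergence of the factor $(1-p^{m-1})$ upon restricting to units. The integrality input from von Staudt--Clausen (equivalently, the $p$-integrality of $\frac{B_m}{m}$ for $(p-1)\nmid m$) is what guarantees the measure is $\zp$-valued in the first place. A more hands-on alternative avoids measures altogether and argues directly with power sums $\sum_{p\nmid a,\,0\le a<p^{N}}a^{m-1}$ and their Bernoulli-polynomial evaluations, but tracking the $p$-adic valuations of the error terms there is more delicate than the clean continuity argument above.
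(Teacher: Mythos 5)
The paper never proves this theorem: it is quoted as the classical Kummer congruence, purely as background motivating the Kummer-type congruences the paper does prove, namely Theorem \ref{kupoly} for $\co{2n}{-k}$ and $\be{2n}{-k}$, whose proof is elementary (Lemma \ref{congruencestir}, i.e.\ $j!\stirs{n}{j}\equiv j!\stirs{m}{j}\bmod p^N$ via Euler's theorem, fed into the closed formula of Proposition \ref{sasaki}). So there is no in-paper argument to compare yours against, and I can only judge the proposal on its own terms. On those terms it is the standard $p$-adic measure (Mazur measure) proof, and its logic is sound: the hypothesis splits into $m\equiv n\bmod(p-1)$, which kills the Teichm\"uller part, and $m\equiv n\bmod p^{N-1}$, which handles $\brkt{x}$ for $p$ odd, giving $x^{m-1}\equiv x^{n-1}\bmod p^N$ on $\zp^{\times}$; integrating this against a $\zp$-valued measure $\mu_c$ whose $(m-1)$-st moment over $\zp^{\times}$ is $(1-c^{m})(1-p^{m-1})\frac{B_m}{m}$, and choosing $c$ a primitive root modulo $p$ (so that $1-c^{m}$ is a unit precisely because $(p-1)\nmid m$, while $c^{m}\equiv c^{n}\bmod p^N$ by Euler), yields the claim after dividing by that unit. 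The step you defer --- construction of $\mu_c$, the distribution relations, boundedness, and the moment formula with the Euler factor arising from restriction to $\zp^{\times}$ --- is genuinely the bulk of the work, but it is a textbook construction (Washington, \emph{Introduction to Cyclotomic Fields}, Ch.~12, or Koblitz), so I regard the proposal as correct in outline rather than as having a gap. Two inaccuracies are worth fixing: the $\zp$-valuedness of $\mu_c$ comes from the explicit $c$-regularized values on the sets $a+p^r\zp$, not from von Staudt--Clausen as you assert; and the $p$-integrality of $\frac{B_m}{m}$ for $(p-1)\nmid m$ (Adams' theorem) does not follow from von Staudt--Clausen alone --- but you never actually need it a priori, since the $p$-integrality of $(1-p^{m-1})\frac{B_m}{m}$ falls out of the moment formula once $1-c^{m}$ is a unit. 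Finally, drop the vague alternative of ``passing to the limit in $c$''; the primitive-root choice of $c$ is the clean and complete way to remove the auxiliary factor. It is also worth noting the structural contrast: your argument needs genuine $p$-adic interpolation because positive-index Bernoulli numbers carry denominators and Euler factors, whereas the paper's negative-index analogues are integers expressible in Stirling numbers, which is exactly why the paper can prove its own Kummer-type congruences by elementary means.
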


	Sakata proved Kummer type congruences for poly-Bernoulli numbers.
	
	\begin{theorem}[{\cite[Theorem 6.1]{Sa}\label{kusakata}}]
		Let $p$ be an odd prime number. For $m,n,$ and $N\in\Z_{\geq1}$ with $m\equiv n \bmod \varphi(p^N)$ and $m, n\geq N$, we have
		\begin{align*}
			B_n^{(-k)}&\equiv B_m^{(-k)} \bmod{p^N},\\
			C_{n}^{(-k)}&\equiv C_{m}^{(-k)} \bmod{p^N}.
		\end{align*}
	\end{theorem}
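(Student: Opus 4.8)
The plan is to reduce both congruences to a single elementary fact about powers modulo $p^N$, after rewriting $B_n^{(-k)}$ and $C_n^{(-k)}$ in a closed form that, for fixed $k$, displays them as $\Z$-linear combinations of $n$-th powers of fixed integers with coefficients not depending on $n$.

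First I would record the needed closed formulas. Writing $\mm{Li}_{-k}(x)=\big(x\tfrac{d}{dx}\big)^k\tfrac{x}{1-x}=\sum_{i=1}^{k}i!\stirs{k}{i}\tfrac{x^i}{(1-x)^{i+1}}$ and substituting $x=1-e^{-t}$ (so that $1-x=e^{-t}$ and $(1-x)^{-(i+1)}=e^{(i+1)t}$), one expands and collects exponentials in the generating functions to obtain
\[
B_n^{(-k)}=\sum_{i=0}^{k}(-1)^{k-i}\,i!\,\stirs{k}{i}(i+1)^n,\qquad
C_n^{(-k)}=\sum_{i=0}^{k}(-1)^{k-i}\,i!\,\stirs{k}{i}\,i^n .
\]
These are the same identities that follow from Kaneko's duality $B_n^{(-k)}=B_k^{(-n)}$ applied to the standard formula $B_n^{(-k)}=\sum_{i=0}^{n}(-1)^{n-i}i!\stirs{n}{i}(i+1)^k$, together with $C_n^{(-k)}=\sum_{j=0}^{n}\binom{n}{j}(-1)^{n-j}B_j^{(-k)}$ and the binomial identity $\sum_j\binom{n}{j}(-1)^{n-j}(i+1)^j=i^n$. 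The crucial structural point is that $B_n^{(-k)}$ and $C_n^{(-k)}$ are integer combinations of the $n$-th powers $(i+1)^n$ and $i^n$ respectively, with coefficients independent of $n$.

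Next I would prove the key lemma: for any $a\in\Z$, if $m\equiv n\pmod{\varphi(p^N)}$ and $m,n\ge N$, then $a^m\equiv a^n\pmod{p^N}$. If $p\nmid a$, then $a\in\jg{p^N}$, a group of order $\varphi(p^N)$, so $a^{\varphi(p^N)}\equiv1\pmod{p^N}$ and hence $a^m\equiv a^n$ whenever $m\equiv n\pmod{\varphi(p^N)}$. If $p\mid a$, then $\ordp(a^m)=m\,\ordp(a)\ge m\ge N$ and likewise $\ordp(a^n)\ge N$, so $a^m\equiv a^n\equiv0\pmod{p^N}$.

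Finally I would combine the pieces. Subtracting the two closed formulas termwise gives
\[
B_n^{(-k)}-B_m^{(-k)}=\sum_{i=0}^{k}(-1)^{k-i}\,i!\,\stirs{k}{i}\big[(i+1)^n-(i+1)^m\big],
\]
and each bracket is divisible by $p^N$ by the lemma applied to $a=i+1$; the identical computation with $a=i$ handles $C_n^{(-k)}-C_m^{(-k)}$. The main thing to watch is that the hypotheses differ from the classical Kummer congruence — there is no condition of the form $(p-1)\nmid n$, and in its place the assumption $m,n\ge N$ does precisely the work needed for the ``bad'' indices $i$ with $p\mid(i+1)$ (resp. $p\mid i$), where Euler's theorem is unavailable and one instead relies on a power $\ge N$ of a multiple of $p$ being $0$ modulo $p^N$. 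Getting the closed form of the first step into pure $n$-th powers with $n$-independent integer coefficients is the real content; after that the congruence is immediate.
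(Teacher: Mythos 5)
Your proof is correct. One contextual point first: the paper gives no proof of this statement at all --- it is quoted from Sakata's thesis \cite[Theorem 6.1]{Sa} --- so the only in-paper object of comparison is the proof of the level-two analogue, Theorem \ref{kupoly}, and your argument is in essence the same as that one. There, $\co{2n}{-k}$ is written via Proposition \ref{sasaki} as a combination of $i!\stirs{2n+1}{i}$ with $n$-independent, $p$-integral coefficients, and the congruence is finished by Lemma \ref{congruencestir}, whose proof expands $j!\stirs{n}{j}$ as the power sum $\sum_{l}(-1)^{j+l}\binom{j}{l}l^n$ and invokes Euler's theorem; your decomposition of $B_n^{(-k)}$ and $C_n^{(-k)}$ into pure powers $(i+1)^n$ and $i^n$ with $n$-independent integer coefficients is exactly the poly-Bernoulli counterpart of that mechanism, and your key lemma on $a^m\equiv a^n \bmod p^N$ is precisely the fact underlying Lemma \ref{congruencestir}. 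Two remarks on your write-up. First, the claim that one can get the closed forms by directly ``expanding and collecting exponentials'' is glib --- carrying that out amounts to re-proving the duality \eqref{dualb} --- but this is harmless because your second derivation (duality plus the standard formula $B_n^{(-k)}=\sum_{i=0}^{n}(-1)^{n-i}i!\stirs{n}{i}(i+1)^k$, plus the binomial transform relating $C$ to $B$) is complete and relies only on results quoted in the paper. Second, you are actually more careful than the paper on the key step: the proof of Lemma \ref{congruencestir} simply cites ``Euler's theorem,'' which covers only the terms with $p\nmid l$, whereas the terms with $p\mid l$ are exactly where the hypothesis $m,n\geq N$ is needed; your explicit split into the cases $p\nmid a$ and $p\mid a$ supplies that missing detail.
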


    \begin{remark}
    	
    	Kitahara generalized Theorem \ref{kusakata} by using $p$-adic distributions (\cite[Theorem 7]{Ki}), and Katagiri proved the congruence relation for multi-poly-Bernoulli numbers (\cite[Theorem 1.6]{Ka}).
    \end{remark}

    Also, Sakata proved the following congruence relation.
    
    \begin{theorem}[{\cite[Theorem 6.10]{Sa}}]
    	Let $p$ be an odd prime number. For $k \in\Z_{\geq 0}$ and $n, N\in\Z_{\geq 1}$ with $n\geq N$, we have
    	\begin{align}
    		\sum_{i=0}^{\varphi(p^N)-1} B_{n}^{(-k-i)}\equiv0 \bmod{p^N}. \label{sum}
    	\end{align} 
    \end{theorem}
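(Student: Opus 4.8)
The plan is to reduce everything to the explicit formula for poly-Bernoulli numbers of negative upper index and then analyze a geometric sum. Expanding $\mm{Li}_{-k}(1-e^{-t})=\sum_{m\geq1}m^k(1-e^{-t})^m$ and using the standard expansion $(1-e^{-t})^m=m!\sum_{n}(-1)^{n+m}\stirs nm t^n/n!$, I would first record
\[
B_n^{(-k)}=\sum_{m=0}^{n}(-1)^{n+m}m!\stirs nm (m+1)^k .
\]
The crucial feature is that the coefficients $(-1)^{n+m}m!\stirs nm$ do not depend on $k$, so $B_n^{(-k)}$ is an exponential polynomial in $k$ with bases $m+1$. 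Substituting into the sum, interchanging the two summations, and noting that the $m=0$ term drops out because $\stirs n0=0$ for $n\geq1$, I obtain
\[
\sum_{i=0}^{\varphi(p^N)-1}B_n^{(-k-i)}=\sum_{m=1}^{n}(-1)^{n+m}m!\stirs nm (m+1)^k\,\frac{(m+1)^{\varphi(p^N)}-1}{m},
\]
where $\frac{(m+1)^{\varphi(p^N)}-1}{m}=\sum_{i=0}^{\varphi(p^N)-1}(m+1)^i$ is an integer. The task is then to show this explicit sum is divisible by $p^N$, and I would split it according to whether $p\mid m+1$.

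For the terms with $p\nmid m+1$, Euler's theorem gives $(m+1)^{\varphi(p^N)}\equiv1\bmod p^N$, so the numerator $(m+1)^{\varphi(p^N)}-1$ is divisible by $p^N$. Dividing by $m$ can lose at most $\ordp(m)$ powers of $p$, but the prefactor $m!$ supplies them since $\ordp(m!)\geq\ordp(m)$; hence each such term is already $\equiv0\bmod p^N$. This part is routine and uses nothing beyond $n\geq1$.

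The terms with $p\mid m+1$ are the heart of the matter and the main obstacle. Here $m\equiv-1\bmod p$ is a $p$-adic unit, so the geometric factor is itself a unit, $\frac{(m+1)^{\varphi(p^N)}-1}{m}\equiv-\frac1m\bmod p^N$ (using $\varphi(p^N)\geq N$ to annihilate $(m+1)^{\varphi(p^N)}$), and these terms do not vanish one by one; they must be treated collectively. Modulo $p^N$ the contribution collapses to $-\sum_{p\mid m+1}(-1)^{n+m}(m-1)!\stirs nm (m+1)^k$. Writing $m+1=lp$, Legendre's formula yields $\ordp\big((lp-2)!\big)\geq l-1$ while the factor $(lp)^k$ contributes $\geq k$, so the genuine content is a $p$-adic lower bound for $\ordp\stirs{n}{lp-1}$, i.e. a congruence for Stirling numbers of the second kind whose lower index is $\equiv-1\bmod p$. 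I would attack this through the generating identity $\sum_n\big(\sum_m(-1)^m(m-1)!\stirs nm x^m\big)\frac{t^n}{n!}=-\log\!\big(1+x(e^t-1)\big)$, inserting the factor $(m+1)^k$ by means of the Euler operator $x\frac{d}{dx}$ and isolating the progression $m\equiv-1\bmod p$ with a $p$-th root of unity filter, then reading off the $p$-adic valuations of the resulting coefficients.

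The step I expect to be genuinely hard is exactly this last one: controlling $\ordp\stirs{n}{lp-1}$ — equivalently, the collective $p$-divisibility of the $p\mid m+1$ terms — finely enough to reach $p^N$. Everything else, namely the explicit formula, the interchange of summations, and the entire $p\nmid m+1$ case, is mechanical, so the full weight of the theorem rests on this Stirling-number congruence.
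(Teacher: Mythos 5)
Your reduction is correct as far as it goes: the explicit formula $B_n^{(-k)}=\sum_{m=0}^{n}(-1)^{n+m}m!\stirs{n}{m}(m+1)^k$ is right, the interchange of summations and the geometric summation are legitimate, and your treatment of the terms with $p\nmid m+1$ (Euler's theorem plus the observation that $m!/m=(m-1)!$ is an integer) is complete and clean. But the proposal stops exactly where it must not: the terms with $p\mid m+1$ are left as an unproven ``Stirling congruence,'' so this is not a proof. Worse, that step is not merely hard --- under the hypotheses as printed ($k\geq0$, $n\geq N$) it is \emph{false}, and so is the stated congruence: take $p=3$, $N=1$, $n=2$, $k=0$; then
\begin{align*}
\sum_{i=0}^{\varphi(3)-1}B_2^{(-i)}=B_2^{(0)}+B_2^{(-1)}=1+4=5\equiv2\bmod 3.
\end{align*}
In your notation the offending term is $m=2$, $m+1=p$, where $(m-1)!\,\stirs{2}{2}\,(m+1)^0=1$ has $p$-adic valuation $0$; no lower bound on $\ordp\stirs{n}{lp-1}$ can repair this (indeed Lemma \ref{vonlemma} shows $\stirs{n}{lp-1}$ is typically a unit mod $p$), so the generating-function and root-of-unity-filter program you sketch for the last step cannot succeed.

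The resolution is that the hypothesis ``$n\geq N$'' in the quoted statement is a mis-transcription of ``$k\geq N$'': that is the hypothesis in the paper's own level-two analogue, the proposition establishing \eqref{sumcose} and \eqref{sumcota}, and in the remark there on $\sum_i C_n^{(-k-i)}$, both proved by exactly your method (explicit formula, Lemma \ref{bekistir}, geometric sums split according to the residue of the base modulo $p$). Once $k\geq N$ is assumed, your ``heart of the matter'' evaporates: for $p\mid m+1$ the geometric factor $\sum_{i=0}^{\varphi(p^N)-1}(m+1)^i$ is an integer and $\ordp\bigl((m+1)^k\bigr)\geq k\geq N$, so each such term vanishes mod $p^N$ term by term, with no information about $\stirs{n}{lp-1}$ needed; combined with your $p\nmid m+1$ case this finishes the proof, and no condition on $n$ is needed at all (consistent with the analogous proposition, which allows all $n\geq0$). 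A final remark on why $B_n$ behaves better than its relatives: in the paper's computations the base $l=1$ survives and leaves the residues $(-1)^nT_{2n+1}\varphi(p^N)$ and $\widetilde{T}_{2n}\varphi(p^N)$, whereas in your parametrization the bases are $m+1\geq2$, and bases $\equiv1\bmod p$ (i.e.\ $p\mid m$) are absorbed by the factor $m!/m$ exactly as you showed --- which is why $B_n$ satisfies the full congruence $0\bmod p^N$ while $C_n$, $\co{2n}{-k}$ and $\be{2n}{-k}$ satisfy it only up to a $\varphi(p^N)$ defect.
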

	
	By using the explicit formula for polycosecant numbers, Pallewatta showed that polycosecant numbers satisfy the same type of congruence.
	
	\begin{theorem}[{\cite[Theorem 3.12]{Pa}\label{kupa}}]
		Let $p$ be a prime number. For $k,m,n$ and $N\in\Z_{\geq 1}$ with $2m\equiv 2n \bmod \varphi(p^N)$ and $2m, 2n\geq N$, we have
		\[
		\co{2m}{-2k+1}\equiv\co{2n}{-2k+1}\bmod{p^N}.
		\]
	\end{theorem}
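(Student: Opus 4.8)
The plan is to reduce the congruence to an explicit formula in which the lower index $2n$ appears only as an exponent. Writing $s=2k-1$, I would start from $\mm{A}_{-s}(z)=\bigl(z\tfrac{d}{dz}\bigr)^{s}\tfrac{2z}{1-z^{2}}$ and substitute $z=\tanh(t/2)$. Since $\mm{A}_{0}(\tanh(t/2))=\sinh t$ and, because $\tfrac{dz}{dt}=\tfrac12(1-z^{2})$, the operator $z\tfrac{d}{dz}$ becomes $\sinh t\,\tfrac{d}{dt}$ under this substitution, the generating function in \eqref{genecosecant} becomes $\tfrac{1}{\sinh t}\bigl(\sinh t\,\tfrac{d}{dt}\bigr)^{s}\sinh t$. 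Setting $X=e^{t}$ turns $\sinh t\,\tfrac{d}{dt}$ into $\tfrac{X^{2}-1}{2}\tfrac{d}{dX}$, so the whole expression is a Laurent polynomial in $X$; concretely, using the Stirling expansion of $\bigl(z\tfrac{d}{dz}\bigr)^{s}$ and dividing by $\sinh t=\tfrac{(X-1)(X+1)}{2X}$ one is left with
\[
\sum_{j=1}^{s}\stirs{s}{j}\frac{j!}{2^{j}}\cdot\frac{(X-1)^{j-1}\bigl(X^{j+1}-(-1)^{j}\bigr)}{X^{j}}.
\]
Expanding this as $\sum_{r}\gamma_{r}e^{rt}$ and reading off the coefficient of $t^{2n}/(2n)!$ yields $\co{2n}{-2k+1}=\sum_{r}\gamma_{r}\,r^{2n}$, where the $\gamma_{r}$ are rational numbers (with denominators that are powers of $2$), independent of $n$, supported on $|r|\le s$.

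The next step I would isolate is a symmetry that controls which $r$ occur. Because $\mm{A}_{-s}(1/z)=(-1)^{s+1}\mm{A}_{-s}(z)$, replacing $t$ by $t+i\pi$ (so that $\tanh(t/2)\mapsto\coth(t/2)$ and $\sinh t\mapsto-\sinh t$) sends the generating function $G(t)$ to $(-1)^{s}G(t)=-G(t)$, as $s$ is odd; comparing with $G(t+i\pi)=\sum_{r}\gamma_{r}(-1)^{r}e^{rt}$ forces $\gamma_{r}=0$ for every even $r$. Since moreover $\gamma_{r}=\gamma_{-r}$ and $r^{2n}=(-r)^{2n}$, the formula collapses to
\[
\co{2n}{-2k+1}=\sum_{\substack{r\ \mathrm{odd}\\ 1\le r\le 2k-1}}\gamma_{r}\,r^{2n},
\]
with only odd bases $r$ appearing. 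This oddness, forced by the odd upper index $-2k+1$, is exactly the feature that makes the even lower index essential.

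For an odd prime $p$ the theorem is then immediate. The denominators of the $\gamma_{r}$ are powers of $2$, so $\gamma_{r}\in\zetp$, and it suffices to compare $r^{2m}$ with $r^{2n}$ modulo $p^{N}$ term by term. If $p\nmid r$, then $r^{\varphi(p^{N})}\equiv 1\pmod{p^{N}}$ by Euler's theorem, so $2m\equiv 2n\pmod{\varphi(p^{N})}$ gives $r^{2m}\equiv r^{2n}\pmod{p^{N}}$; if $p\mid r$, then $r^{2m}\equiv r^{2n}\equiv 0\pmod{p^{N}}$ because $2m,2n\ge N$ and $\ordp(r)\ge 1$. Summing over $r$ gives $\co{2m}{-2k+1}\equiv\co{2n}{-2k+1}\pmod{p^{N}}$.

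The main obstacle is the case $p=2$, where the $\gamma_{r}$ are genuinely not $2$-integral (for instance the extreme coefficient is $s!/2^{s-1}$, of $2$-adic valuation $1-\sigma_{2}(s)$ with $\sigma_{2}$ the binary digit sum, which can be very negative), so the term-by-term argument fails and one must exploit cancellation. Here I would use that every base is an odd square, hence $r^{2}\equiv 1\pmod 8$: by the $2$-adic lifting-the-exponent lemma $\mm{ord}_{2}\bigl((r^{2})^{m}-(r^{2})^{n}\bigr)\ge 3+\mm{ord}_{2}(m-n)$, while $2m\equiv 2n\bmod\varphi(2^{N})=2^{N-1}$ gives $\mm{ord}_{2}(m-n)\ge N-2$. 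Since the sequence $\{\co{2n}{-2k+1}\}_{n}$ is integer-valued and satisfies an integer linear recurrence whose characteristic roots are the odd units $r^{2}$, one expects the valuation lost to the denominators to be recovered in the full sum. Making this precise — e.g.\ by bounding the period of $\co{2n}{-2k+1}$ modulo $2^{N}$ through the multiplicative orders of the $r^{2}$ in $(\Z/2^{N}\Z)^{\times}$ — is the delicate part of the argument.
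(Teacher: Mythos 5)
Your treatment of odd primes is correct, and in substance it is the same mechanism the paper itself uses for its generalization (Theorem \ref{kupoly}, proved via Proposition \ref{sasaki}, Lemma \ref{bekistir} and Lemma \ref{congruencestir}): express the number as a finite linear combination of $n$-th powers with coefficients independent of $n$, then apply Euler's theorem; your Laurent-polynomial substitution just produces this power-sum form (essentially the dual of Pallewatta's explicit formula) by a different route. The problem is that the statement you were asked to prove is Pallewatta's theorem, which allows \emph{every} prime, while the paper's own Theorem \ref{kupoly} is restricted to odd $p$; the case $p=2$ is precisely the extra content carried by this statement, it is nontrivial (e.g.\ $\varphi(2^N)=2^{N-1}$ gives genuine congruences modulo $2^N$), and your proposal openly leaves it unproved. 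So there is a genuine gap, not a missing routine detail.

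Moreover, the gap cannot be closed along the lines you sketch, because the term-by-term estimate fails in an essential way: cross-cancellation between distinct bases $r$ is required. Concretely, take $2k-1=11$ and write $\co{2n}{-11}=\sum_{i=0}^{5}d_i(2i+1)^{2n}$ with $d_i=2\gamma_{2i+1}$; one computes
\[
d_0=-\tfrac{691}{2},\quad d_1=\tfrac{21153}{2},\quad d_2=-\tfrac{238425}{4},\quad d_3=\tfrac{509355}{4},\quad d_4=-\tfrac{467775}{4},\quad d_5=\tfrac{11!}{2^{10}}=\tfrac{155925}{4},
\]
which is consistent with $\sum_i d_i=\co{0}{-11}=1$ and $\sum_i d_i(2i+1)^2=\co{2}{-11}=\co{10}{-3}=88573$, the value in the paper's example. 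Now take ${\mm{ord}}_2(m-n)=N-2$, the extremal case permitted by $2m\equiv 2n \bmod \varphi(2^N)$. Lifting the exponent gives ${\mm{ord}}_2\bigl(5^{2(m-n)}-1\bigr)={\mm{ord}}_2\bigl(11^{2(m-n)}-1\bigr)=N+1$, so in the difference $\co{2m}{-11}-\co{2n}{-11}=\sum_{i\geq 1}d_i(2i+1)^{2n}\bigl((2i+1)^{2(m-n)}-1\bigr)$ the $i=2$ and $i=5$ terms each have $2$-adic valuation \emph{exactly} $N-1$, while the terms $i=1,3,4$ have valuation at least $N$. The congruence modulo $2^N$ holds only because the two bad terms are both odd multiples of $2^{N-1}$, so their sum is divisible by $2^N$. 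Hence any correct proof of the $p=2$ case must establish integrality of suitable \emph{combinations} of the $\gamma_r$ (or work from a formula whose coefficients are manifestly $2$-integral); neither your LTE bound nor your proposed control of the period via the multiplicative orders of the $r^2$ in $(\Z/2^N\Z)^{\times}$ can do this, since for non-$2$-integral $\gamma_r$ the reduction of $\sum_r \gamma_r r^{2n}$ modulo $2^N$ is not determined by the residues $r^2 \bmod 2^N$.
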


    Also, Kaneko proved that poly-Bernoulli numbers satisfy duality formulas as follows:
    
    \begin{theorem}[{\cite[Theorem 2]{K1}}]
    	For $l, m\in\Z_{\geq0}$, we have
    	\begin{align}
    		B_m^{(-l)}&=B_l^{(-m)},\label{dualb}\\
    		C_m^{(-l-1)}&=C_l^{(-m-1)}.\label{dualc}
    	\end{align}
    \end{theorem}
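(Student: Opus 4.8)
The plan is to prove both identities by forming the two-variable exponential generating function of the numbers on the left-hand side and exhibiting it as a function of $x$ and $y$ that is manifestly symmetric under $x\leftrightarrow y$. For \eqref{dualb} I would set
\[
F(x,y)=\sum_{m=0}^{\infty}\sum_{l=0}^{\infty}B_m^{(-l)}\frac{x^m}{m!}\frac{y^l}{l!},
\]
sum first over $m$ so that the inner sum becomes $\mm{Li}_{-l}(1-e^{-x})/(1-e^{-x})$ from the defining generating function of $B_n^{(k)}$, and then sum over $l$. The essential input is the elementary expansion $\mm{Li}_{-l}(z)=\sum_{n\geq1}n^l z^n$ for $l\in\Z_{\geq0}$, which lets me interchange the two summations and recognize $\sum_{l\geq0}\mm{Li}_{-l}(z)\,y^l/l!=\sum_{n\geq1}z^n e^{ny}=ze^y/(1-ze^y)$.

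Setting $z=1-e^{-x}$ and cancelling the prefactor $1/(1-e^{-x})$ gives $F(x,y)=e^y/\bigl(1-e^y+e^{y-x}\bigr)$, and clearing the hidden factor $e^{-x}$ by multiplying numerator and denominator by $e^{x}$ should collapse this to
\[
F(x,y)=\frac{e^{x+y}}{e^x+e^y-e^{x+y}},
\]
which is visibly invariant under $x\leftrightarrow y$. Since the coefficient of $\frac{x^m}{m!}\frac{y^l}{l!}$ in $F(x,y)$ is $B_m^{(-l)}$ and in $F(y,x)$ is $B_l^{(-m)}$, the identity $F(x,y)=F(y,x)$ yields \eqref{dualb}.

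For \eqref{dualc} I would run the same argument from the defining generating function of $C_n^{(k)}$. Because $e^t-1=e^t(1-e^{-t})$, dividing by $e^t-1$ rather than by $1-e^{-t}$ only introduces the extra overall factor $e^{-t}$, so the $C$-generating function equals $e^{-x}$ times the corresponding $B$-type sum. Summing the shifted index $-l-1$ now uses $\mm{Li}_{-l-1}(z)=\sum_{n\geq1}n^{l+1}z^n$ together with $\sum_{n\geq1}nw^n=w/(1-w)^2$, giving $\sum_{l\geq0}\mm{Li}_{-l-1}(z)\,y^l/l!=ze^y/(1-ze^y)^2$. After the substitution $z=1-e^{-x}$ and the same clearing of $e^{-x}$ (now squared, via multiplication by $e^{2x}$) I expect to reach
\[
G(x,y)=\sum_{m,l\geq0}C_m^{(-l-1)}\frac{x^m}{m!}\frac{y^l}{l!}=\frac{e^{x+y}}{\left(e^x+e^y-e^{x+y}\right)^2},
\]
whose symmetry gives \eqref{dualc}.

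The routine but delicate step is the algebraic simplification that brings each generating function into manifestly symmetric form; the only conceptual point is justifying the interchange of the two summations. I would settle this by working throughout with formal power series in $y$ whose coefficients lie in $\Q[[x]]$: since $z=1-e^{-x}$ has positive order in $x$, the coefficient of any fixed monomial $x^m y^l$ is a finite sum, so no analytic convergence issue arises and the interchange is valid. Beyond this bookkeeping I do not anticipate a genuine obstacle.
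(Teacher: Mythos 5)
Your proof is correct and is essentially the same argument as in the cited source \cite{K1} (the paper itself only quotes this theorem without proof): both dualities follow from the manifestly symmetric closed forms $e^{x+y}/(e^x+e^y-e^{x+y})$ and $e^{x+y}/(e^x+e^y-e^{x+y})^2$ of the double generating functions, which are precisely the $n=0$ and $n=1$ cases of the symmetrized generating function $n!\,e^{x+y}/(e^x+e^y-e^{x+y})^{n+1}$ from \cite[Theorem 2.1]{KST} that the paper recalls in Section 4 together with \eqref{casesymber}. Your computations (including the factor $e^{-x}$ from $e^x-1=e^x(1-e^{-x})$ and the formal-power-series justification of interchanging the sums) check out.
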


    As its analogue, it was proved that $\co{2m}{-2l-1}$ and $\be{2m}{-2l}$ satisfy duality formulas (see \cite[Theorem 3.1]{Pa} and \cite{KKT}).
    
    \begin{theorem}
    	For $l, m\in\Z_{\geq0}$, we have
    	\begin{align}
    		\co{2m}{-2l-1}&=\co{2l}{-2m-1},\label{dualcose}\\
    		\be{2m}{-2l}&=\be{2l}{-2m}.\label{dualcota}
    	\end{align}
    \end{theorem}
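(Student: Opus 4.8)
The plan is to package all of the negative-index numbers into a single two-variable generating function and then exhibit closed forms that are \emph{visibly} symmetric in the two variables. The first step is to rewrite the level-two polylogarithm at negative index as a differential operator. Since
\[
\mm{A}_{-k}(z)=2\sum_{n=0}^{\infty}(2n+1)^{k}z^{2n+1}=\left(z\frac{d}{dz}\right)^{k}\mm{A}_0(z),\qquad \mm{A}_0(z)=\frac{2z}{1-z^2},
\]
and since the substitution $z=\ta{}$ converts the Euler operator $z\,d/dz$ into $\sinh t\,(d/dt)$ and sends $\mm{A}_0(\ta{})$ to $\sinh t$, the defining relation \eqref{genecosecant} yields the operator formula
\[
\sum_{n=0}^{\infty}\co{n}{-k}\frac{t^n}{n!}=\frac{1}{\sinh t}\left(\sinh t\,\frac{d}{dt}\right)^{k}\sinh t .
\]

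Next I would sum against $u^k/k!$ over $k\geq0$ and use the fact that the Euler operator exponentiates to a dilation, $e^{u\,z\,d/dz}g(z)=g(e^{u}z)$, as a formal identity. Working in the $z$-variable this collapses the whole family into
\[
G(u,t):=\sum_{n,k\geq0}\co{n}{-k}\frac{t^n}{n!}\frac{u^k}{k!}=\frac{(1-z^2)\,e^{u}}{1-e^{2u}z^2},\qquad z=\ta{}.
\]
This single closed form carries both the cosecant and, after multiplying by $\cosh t$ (which by \eqref{cotacose} is exactly the passage from cosecant to cotangent generating functions), the cotangent data.

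Then comes the parity bookkeeping, which is where the two dualities separate. For \eqref{dualcose} the superscript $-2l-1$ is odd, so I take the odd part of $G$ in $u$; the $t$-expansion is automatically even because $\co{2m-1}{k}=0$. Rewriting the hyperbolic expression and integrating in $t$ (substituting $w=\sinh t$ makes the integral elementary) I expect to reach
\[
\sum_{m,l\geq0}\co{2m}{-2l-1}\frac{x^{2m+1}}{(2m+1)!}\frac{y^{2l+1}}{(2l+1)!}=\tanh^{-1}(\sinh x\,\sinh y),
\]
which is patently symmetric in $x\leftrightarrow y$ and hence gives \eqref{dualcose}. For \eqref{dualcota} the superscript $-2l$ is even, so I instead take the even part in $u$ and multiply by $\cosh t$; I expect this to produce
\[
\sum_{m,l\geq0}\be{2m}{-2l}\frac{x^{2m}}{(2m)!}\frac{y^{2l}}{(2l)!}=\frac{\cosh x\,\cosh y}{1-\sinh^2 x\,\sinh^2 y},
\]
again symmetric, yielding \eqref{dualcota}.

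The main obstacle is the hyperbolic simplification that reveals the symmetry: one must show that the common denominator arising from the even/odd parts of $G$, namely $\cosh^2 y-\sinh^2 y\cosh^2 t$, equals $1-\sinh^2 t\,\sinh^2 y$, and — crucially for the cotangent case — that the extra factor $\cosh t$ is precisely what turns the asymmetric numerator $\cosh y$ into the symmetric $\cosh x\cosh y$. (Note that without this factor the even-superscript cosecant numbers $\co{2m}{-2l}$ are \emph{not} self-dual, so the cotangent duality does not reduce to a cosecant duality merely through \eqref{cotacose}; the symmetry is genuinely created by the $\cosh t$ factor.) Secondary technical points to handle carefully are the justification of the operator-exponentiation identity at the level of formal power series and the term-by-term integration used in the cosecant case.
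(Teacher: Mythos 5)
Your proposal is correct, and every closed form you claim checks out: with $z=\ta{}$ one indeed has $z\frac{d}{dz}=\sinh t\,\frac{d}{dt}$ and $\mm{A}_0(\ta{})=\sinh t$, so the dilation identity gives
\[
G(u,t)=\sum_{n,k\geq0}\co{n}{-k}\frac{t^n}{n!}\frac{u^k}{k!}=\frac{(1-z^2)e^u}{1-e^{2u}z^2}=\frac{1}{\cosh u-\cosh t\,\sinh u},
\]
whose odd and even parts in $u$ are $\frac{\cosh t\,\sinh u}{1-\sinh^2 t\,\sinh^2 u}$ and $\frac{\cosh u}{1-\sinh^2 t\,\sinh^2 u}$ (via the simplification $\cosh^2 u-\cosh^2 t\,\sinh^2 u=1-\sinh^2 t\,\sinh^2 u$ that you flag); integrating in $t$ and multiplying by $\cosh t$ then yield your symmetric functions $\tanh^{-1}(\sinh x\,\sinh y)$ and $\frac{\cosh x\,\cosh y}{1-\sinh^2 x\,\sinh^2 y}$, and both dualities follow. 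However, this is a genuinely different route from the paper's. The paper derives both formulas as specializations of the more general Theorem \ref{cosym}: it introduces symmetrized polycosecant numbers $\cosym{m}{-l}{n}$, shows via Lemmas \ref{fncosym} and \ref{gencosym} that $f_{1,n}(t,y)+f_{2,n}(t,y)$ is their generating function, and reads the duality $\cosym{2m}{-2l}{n}=\cosym{2l}{-2m}{n}$ off the manifest $(t,y)$-symmetry of $f_{1,n}(t,y)+f_{2,n}(t,y)+f_{1,n}(t,-y)+f_{2,n}(t,-y)$; the case $n=1$ gives \eqref{dualcose} since $\cosym{m}{-l}{1}=\frac12\co{m}{-l-1}$, while $n=0$ gives \eqref{dualcota} only after identifying the $n=0$ numbers with $\be{2m}{-2l}+\co{2m}{-2l}+\co{2l}{-2m}$ through the Stirling-number formulas (Propositions \ref{sasaki}, \ref{stircota} and \ref{expsym}). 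Your argument is more elementary and self-contained — no Stirling numbers, no auxiliary family — and it isolates precisely the structural point you emphasize: $\co{2m}{-2l}$ is not self-dual, and the factor $\cosh t$ (i.e.\ the passage from cosecant to cotangent) is exactly what symmetrizes the numerator. What the paper's approach buys in exchange is generality: Theorem \ref{cosym} is a one-parameter family of dualities interpolating both \eqref{dualcose} and \eqref{dualcota}, parallel to the symmetrized poly-Bernoulli theory of \cite{KST}. The only points you should write out carefully are the justification of $e^{u\,z\,d/dz}g(z)=g(e^u z)$ as an identity of formal power series (immediate on monomials $z^n$) and the term-by-term integration in $t$, which is needed to shift $\frac{t^{2m}}{(2m)!}$ to $\frac{x^{2m+1}}{(2m+1)!}$ so that the index structure on the two sides matches before invoking symmetry; both are routine.
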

	
	In Section 2, we review some properties of polycosecant numbers and polycotangent numbers. In Section 3, we prove some algebraic properties of $\co{n}{k}$ and $\be{n}{k}$. In Section 4, we define symmetrized polycosecant numbers and prove their duality formulas.
	
	\section{Preliminaries}
	
	In this section, we review some properties of $\co{n}{k}$. First of all, we prepare some notations.
	
	\begin{definition}[{\cite[Definition 2.2]{AIK}}]
		For $n,m\in\Z$, the Stirling number of the first kind $\stirf{n}{m}$ and of the second kind $\stirs{n}{m}$ are defined by the following recursion formulas, respectively:
		\begin{align*}
			\stirf{0}{0}&=1, \stirf{n}{0}=\stirf{0}{m}=0,\\
			\stirf{n}{m}&=\stirf{n-1}{m-1}+n\stirf{n}{m-1},\\
			\stirs{0}{0}&=1, \stirs{n}{0}=\stirs{0}{m}=0,\\
			\stirs{n}{m}&=\stirs{n-1}{m-1}+m\stirs{n-1}{m}.
		\end{align*}
	\end{definition}
	
	\begin{lemma}[{\cite[Proposition 2.6]{AIK}\label{expstir}}]
		For $j\in\Z_{\geq0}$, we have
		\[
		\frac{(e^t-1)^m}{m!}=\sum_{n=m}^{\infty} \stirs{n}{m}\frac{t^n}{n!}.
		\]
	\end{lemma}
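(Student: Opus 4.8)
The plan is to prove the identity by induction on $m$, comparing the Taylor coefficients of both sides against the defining recursion of $\stirs{n}{m}$. Write $F_m(t)=\di\frac{(e^t-1)^m}{m!}$ and expand it as a formal power series $F_m(t)=\sum_{n\geq 0} s(n,m)\,t^n/n!$; the goal is to identify $s(n,m)=\stirs{n}{m}$ for all $n,m\in\Z_{\geq0}$. Since the right-hand side of the asserted identity is precisely $\sum_{n\geq m} s(n,m)\,t^n/n!$ once we know that $s(n,m)$ vanishes for $n<m$, it suffices to check that the sequence $s(n,m)$ obeys the same initial data and recursion that define $\stirs{n}{m}$.

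First I would dispose of the boundary conditions. For $m=0$ we have $F_0(t)=1$, so $s(0,0)=1$ and $s(n,0)=0$ for $n\geq1$; and for $m\geq1$ the constant term of $(e^t-1)^m$ is $0$, giving $s(0,m)=0$. These match $\stirs{0}{0}=1$ and $\stirs{n}{0}=\stirs{0}{m}=0$. The key step is then to produce the recursion. Differentiating gives
\[
F_m'(t)=\frac{(e^t-1)^{m-1}e^t}{(m-1)!},
\]
and writing $e^t=1+(e^t-1)$ turns the right-hand side into $F_{m-1}(t)+mF_m(t)$. Comparing the coefficient of $t^n/n!$ on both sides of $F_m'=F_{m-1}+mF_m$, and noting that the coefficient of $t^n/n!$ in $F_m'$ is $s(n+1,m)$, yields
\[
s(n+1,m)=s(n,m-1)+m\,s(n,m),
\]
which is exactly the defining recursion $\stirs{n}{m}=\stirs{n-1}{m-1}+m\stirs{n-1}{m}$ after shifting $n$. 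Together with the boundary conditions, an induction on $m$ (or a simultaneous induction on $n+m$) identifies $s(n,m)$ with $\stirs{n}{m}$, and in particular forces $s(n,m)=0$ for $n<m$, completing the proof.

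I expect the only real content to be the differential manipulation $e^t=1+(e^t-1)$, which simultaneously produces the $F_{m-1}$ and $mF_m$ terms; everything else is bookkeeping. An alternative, equally short route avoids induction entirely: expand $(e^t-1)^m=\sum_{j=0}^m (-1)^{m-j}\binom{m}{j}e^{jt}$ by the binomial theorem, use $e^{jt}=\sum_{n\geq0} j^n t^n/n!$, and recognize the inner sum $\frac{1}{m!}\sum_{j=0}^m(-1)^{m-j}\binom{m}{j}j^n$ as the closed form of $\stirs{n}{m}$. This, however, presupposes that explicit formula rather than the recursion taken as the definition in this excerpt, so I would favor the inductive argument for self-containedness.
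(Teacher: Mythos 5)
Your proof is correct. Note, however, that the paper does not prove this lemma at all: it is quoted verbatim from \cite[Proposition 2.6]{AIK} as a known fact, so there is no in-paper argument to compare against. Your inductive argument is the natural self-contained one given that the paper \emph{defines} $\stirs{n}{m}$ by the recursion $\stirs{n}{m}=\stirs{n-1}{m-1}+m\stirs{n-1}{m}$ with the stated boundary values: the differentiation step $F_m'=F_{m-1}+mF_m$ (via $e^t=1+(e^t-1)$) reproduces exactly that recursion for the Taylor coefficients $s(n,m)$, the boundary cases match, and uniqueness of the solution of the recursion then forces $s(n,m)=\stirs{n}{m}$, including the vanishing $s(n,m)=0$ for $n<m$ that justifies starting the sum at $n=m$ --- a point you were right not to gloss over. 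Your alternative route via $(e^t-1)^m=\sum_{j=0}^m(-1)^{m-j}\binom{m}{j}e^{jt}$ is also valid, but as you note it rests on the closed form $\stirs{n}{m}=\frac{(-1)^m}{m!}\sum_{l=0}^m(-1)^l\binom{m}{l}l^n$, which in this paper is itself only a cited result (Lemma \ref{bekistir}, again from \cite{AIK}); so for self-containedness relative to the paper's definitions, your preference for the inductive argument is the right call.
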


    \begin{lemma}[{\cite[Proposition 2.6 (6)]{AIK}\label{bekistir}}]
    	For $m, n\in\Z_{\geq0}$, we have
    	\[
    	\stirs{n}{m}=\frac{(-1)^m}{m!}\sum_{l=0}^{m}(-1)^l\binom{m}{l}l^n.
    	\]
    \end{lemma}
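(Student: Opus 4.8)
The plan is to read the explicit formula straight off the generating-function description in Lemma~\ref{expstir}, by expanding $(e^t-1)^m$ with the binomial theorem and extracting the coefficient of $t^n/n!$. First I would write
\[
(e^t-1)^m=\sum_{l=0}^{m}(-1)^{m-l}\binom{m}{l}e^{lt},
\]
applying the binomial theorem to $(e^t+(-1))^m$. Then I would substitute the Taylor expansion $e^{lt}=\sum_{n=0}^{\infty}l^n\,t^n/n!$ into each summand and interchange the two sums (legitimate, since the sum over $l$ is finite), obtaining
\[
(e^t-1)^m=\sum_{n=0}^{\infty}\left(\sum_{l=0}^{m}(-1)^{m-l}\binom{m}{l}l^n\right)\frac{t^n}{n!}.
\]

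Next I would divide both sides by $m!$ and compare the coefficient of $t^n/n!$ with the one supplied by Lemma~\ref{expstir}, namely $\stirs{n}{m}$. Using $(-1)^{m-l}=(-1)^m(-1)^l$ to factor $(-1)^m$ out of the inner sum, this comparison yields
\[
\stirs{n}{m}=\frac{(-1)^m}{m!}\sum_{l=0}^{m}(-1)^l\binom{m}{l}l^n,
\]
which is exactly the asserted identity.

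There is no real obstacle here; the argument is a routine manipulation of a convergent power series. The only delicate point worth a remark is the range of $n$. Lemma~\ref{expstir} records the expansion only for $n\geq m$, whereas the claim is stated for all $m,n\in\Z_{\geq0}$. For $n\geq m$ the coefficient comparison above is immediate, and for $0\leq n<m$ one has $\stirs{n}{m}=0$ by the defining recursion, which matches the vanishing of the finite difference $\sum_{l=0}^{m}(-1)^l\binom{m}{l}l^n$ for $n<m$; hence the formula holds in every case.
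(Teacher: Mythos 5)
Your proof is correct. The paper does not prove this lemma at all --- it quotes it directly from \cite[Proposition 2.6 (6)]{AIK} --- so there is no internal proof to compare against; your derivation, expanding $(e^t-1)^m$ by the binomial theorem and matching coefficients against Lemma~\ref{expstir}, is the standard argument (essentially the one in \cite{AIK}), and your handling of the edge case $n<m$, where both $\stirs{n}{m}$ and the finite difference $\sum_{l=0}^{m}(-1)^l\binom{m}{l}l^n$ vanish, correctly closes the gap left by Lemma~\ref{expstir} being stated only for $n\geq m$.
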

	
	\begin{lemma}[{\cite[Section 3]{Pa}\label{tax2}}]
		For $m\in\Z_{\geq0}$, we have
		\[
		\ta{m}=(-1)^m\sum_{n=m}^{\infty}\sum_{j=m}^{n} (-1)^j\frac{j!}{2^j}\binom{j-1}{m-1}\stirs{n}{j}\frac{t^n}{n!}.
		\]
	\end{lemma}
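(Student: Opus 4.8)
The plan is to reduce the statement to a single application of Lemma \ref{expstir} by first rewriting $\ta{}$ as a rational function of $e^t-1$. The key identity is
\[
\ta{}=\frac{e^{t/2}-e^{-t/2}}{e^{t/2}+e^{-t/2}}=\frac{e^t-1}{e^t+1},
\]
so, setting $w=e^t-1$, I would record the clean form $\ta{}=\dfrac{w}{2+w}$ and hence $\ta{m}=w^m(2+w)^{-m}$. The point of this rewriting is that powers of $w=e^t-1$ are exactly the objects that Lemma \ref{expstir} expands in terms of $\stirs{n}{j}$, so the whole proof will come down to turning the denominator $(2+w)^{-m}$ into a power series in $w$.

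Next I would expand the denominator. Since $w=e^t-1$ has positive order as a power series in $t$, the binomial series $(1+w/2)^{-m}=\sum_{l\geq0}\binom{-m}{l}(w/2)^l$ is a legitimate identity in the ring of formal power series. Using $\binom{-m}{l}=(-1)^l\binom{m+l-1}{l}$ gives
\[
\ta{m}=\sum_{l=0}^{\infty}(-1)^l\binom{m+l-1}{l}\frac{w^{m+l}}{2^{m+l}}.
\]
Re-indexing by $j=m+l$, using the symmetry $\binom{j-1}{j-m}=\binom{j-1}{m-1}$ and $(-1)^{j-m}=(-1)^m(-1)^j$, this becomes
\[
\ta{m}=(-1)^m\sum_{j=m}^{\infty}(-1)^j\frac{1}{2^j}\binom{j-1}{m-1}(e^t-1)^j.
\]

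Finally I would insert $(e^t-1)^j=j!\sum_{n=j}^{\infty}\stirs{n}{j}\frac{t^n}{n!}$ from Lemma \ref{expstir} and interchange the two summations; since $(e^t-1)^j=O(t^j)$, for each fixed $n$ only the finitely many indices $m\leq j\leq n$ contribute to the coefficient of $t^n$, so collecting them yields precisely the claimed double sum. The computation is essentially mechanical once the rational expression $\ta{}=\frac{e^t-1}{e^t+1}$ is in hand; the only points needing genuine care are the two binomial-coefficient manipulations and the justification of the summation swap. The case $m=0$ is trivial ($\ta{0}=1$), with the convention $\binom{-1}{-1}=1$ leaving only the surviving term $\stirs{0}{0}=1$, so I would treat $m\geq1$ as the main case.
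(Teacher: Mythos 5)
Your proof is correct and takes essentially the same route as the cited source \cite{Pa} (note the paper itself states Lemma \ref{tax2} without proof, deferring to that reference): rewrite $\tanh(t/2)=\frac{e^t-1}{e^t+1}$, expand $\left(1+\frac{e^t-1}{2}\right)^{-m}$ by the binomial series in the ring of formal power series, apply Lemma \ref{expstir} to $(e^t-1)^j$, and swap the sums, which is legitimate since $(e^t-1)^j$ has order $j$ in $t$ so each coefficient of $t^n$ receives only finitely many contributions. Your explicit flagging of the $m=0$ case (where the symmetry $\binom{j-1}{j-m}=\binom{j-1}{m-1}$ requires the convention $\binom{-1}{-1}=1$) is the only genuinely delicate point, and you handle it correctly.
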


    Here, we recall some properties of polycosecant and polycotangent numbers.
	
	\begin{proposition}[{\cite[Theorem 3.4]{Pa}\label{explicose}}]
		For $n\in\Z_{\geq0}$ and $k\in\Z$, we have
		\begin{align}
			\co{n}{k}=\sum_{i=0}^{\lfloor n/2 \rfloor}\frac{1}{(2i+1)^{k+1}}\sum_{j=2i+1}^{n+1}\frac{(-1)^{j+1}j!}{2^{j-1}}\binom{j-1}{2i}\stirs{n+1}{j}.
		\end{align}
	\end{proposition}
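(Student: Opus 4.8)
The plan is to start from the defining generating function \eqref{genecosecant}, expand the level-two polylogarithm as $\mm{A}_k(\ta{})=2\sum_{i=0}^{\infty}(2i+1)^{-k}\ta{2i+1}$, and thereby reduce the whole computation to understanding the single building block $\ta{2i+1}/\sinh t$. The point is that each power $\ta{2i+1}$ already has an explicit power-series expansion from Lemma~\ref{tax2}, so once $\ta{2i+1}/\sinh t$ is re-expressed through that data, reading off the coefficient of $t^n/n!$ will produce the claimed double sum.

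The key step is the elementary identity
\[
\frac{\ta{2i+1}}{\sinh t}=\frac{1}{2i+1}\frac{d}{dt}\ta{2i+1},
\]
which replaces the awkward division by $\sinh t$ with an operation that I can carry out term by term on a known series. To verify it, I would write $\sinh t=2\sinh(t/2)\cosh(t/2)$ and $\ta{}=\sinh(t/2)/\cosh(t/2)$, giving
\[
\frac{\ta{2i+1}}{\sinh t}=\frac{1}{2}\,\frac{\ta{2i}}{\cosh^2(t/2)}=\frac{1}{2}\ta{2i}\bigl(1-\ta{2}\bigr);
\]
on the other hand $\frac{d}{dt}\ta{}=\frac{1}{2}\bigl(1-\ta{2}\bigr)$, so the chain rule yields $\frac{d}{dt}\ta{2i+1}=\frac{2i+1}{2}\ta{2i}\bigl(1-\ta{2}\bigr)$, and dividing by $2i+1$ matches the previous display. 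Substituting this back gives
\[
\frac{\mm{A}_k(\ta{})}{\sinh t}=2\sum_{i=0}^{\infty}\frac{1}{(2i+1)^{k+1}}\frac{d}{dt}\ta{2i+1}.
\]

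Next I would insert the expansion of $\ta{2i+1}$ from Lemma~\ref{tax2} (taking $m=2i+1$, so that $(-1)^m=-1$ and $\binom{j-1}{m-1}=\binom{j-1}{2i}$), differentiate termwise, and reindex $n\mapsto n+1$ so that the surviving monomials run over $t^n/n!$ with $n\geq 2i$. Swapping the two summations—legitimate because the constraint $n\geq 2i$ leaves only finitely many $i$, namely $0\leq i\leq\lfloor n/2\rfloor$, contributing to each fixed coefficient—and reading off the coefficient of $t^n/n!$ gives
\[
\co{n}{k}=-2\sum_{i=0}^{\lfloor n/2\rfloor}\frac{1}{(2i+1)^{k+1}}\sum_{j=2i+1}^{n+1}(-1)^{j}\frac{j!}{2^{j}}\binom{j-1}{2i}\stirs{n+1}{j}.
\]
Finally, absorbing the prefactor via $-2\cdot\frac{(-1)^{j}j!}{2^{j}}=\frac{(-1)^{j+1}j!}{2^{j-1}}$ produces exactly the asserted formula.

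The genuine content lies in the differentiation identity; everything after it is bookkeeping. The only points requiring care are the formal justification of the series manipulations—here $\mm{A}_k(\ta{})=t+O(t^3)$ while $\sinh t=t+O(t^3)$, so the quotient is a genuine power series in $t$ with no pole, and the interchange of the sum over $i$ with coefficient extraction is valid because only finitely many $i$ affect each coefficient—and the correct tracking of summation ranges after differentiation and reindexing, which is precisely what pins down the limits $2i+1\leq j\leq n+1$ and $0\leq i\leq\lfloor n/2\rfloor$.
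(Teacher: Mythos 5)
Your proof is correct. The paper itself states Proposition \ref{explicose} without proof, citing \cite{Pa}; your argument is essentially the one in that cited source, since summing your termwise identity over $i$ is exactly the relation $\frac{\mm{A}_k(\ta{})}{\sinh t}=\frac{d}{dt}\mm{A}_{k+1}(\ta{})$ used there, after which Lemma \ref{tax2}, termwise differentiation, and reindexing yield the stated formula.
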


    The following expressions \eqref{eqsasaki} were noticed by Sasaki.

    	\begin{proposition}\label{sasaki}
    	For $n, k\in\Z_{\geq0}$, we have
    	\begin{align}
    		\co{2n}{-k}=\sum_{i=1}^{\min{\{2n+1,k\}}}\frac{i!(i-1)!}{2^{i-1}}\stirs{k}{i}\stirs{2n+1}{i}.\label{eqsasaki}
    	\end{align}
    \end{proposition}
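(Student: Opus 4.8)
The plan is to prove the identity at the level of generating functions, using the substitution $u=\ta{}$. First I would record two elementary facts. From $u=\frac{e^t-1}{e^t+1}$ one computes $\sinh t=\frac{2u}{1-u^2}$, while the definition of $\mm{A}_{-k}$ gives $\mm{A}_{-k}(u)=2\sum_{m\ge0}(2m+1)^k u^{2m+1}$. Dividing and using \eqref{genecosecant},
\begin{align*}
\sum_{n=0}^{\infty}\co{n}{-k}\frac{t^n}{n!}=\frac{\mm{A}_{-k}(u)}{\sinh t}=(1-u^2)\sum_{m\ge0}(2m+1)^k u^{2m}.
\end{align*}
This is even in $t$, consistent with $\co{2n-1}{k}=0$, so it suffices to read off the coefficient of $t^{2n}/(2n)!$.

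Next I would linearise $(2m+1)^k$ by the standard expansion $(2m+1)^k=\sum_{i\ge0}\stirs{k}{i}\,i!\binom{2m+1}{i}$ (equivalent to Lemma \ref{bekistir}) and interchange the sums over $m$ and $i$. This reduces the whole claim to the single per-$i$ identity
\begin{align*}
(1-u^2)\sum_{m\ge0}i!\binom{2m+1}{i}u^{2m}=\frac{i!(i-1)!}{2^{i-1}}\sum_{n\ge0}\stirs{2n+1}{i}\frac{t^{2n}}{(2n)!}.
\end{align*}
For $k\ge1$ the term $i=0$ drops since $\stirs{k}{0}=0$, and the factors $\stirs{k}{i}$ and $\stirs{2n+1}{i}$ vanish for $i>k$ and $i>2n+1$ respectively; summing the per-$i$ identity against $\stirs{k}{i}$ and comparing coefficients of $t^{2n}/(2n)!$ then produces exactly \eqref{eqsasaki} with $1\le i\le\min\{2n+1,k\}$.

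To establish the per-$i$ identity I would evaluate each side as a function of $u$. On the left, taking the odd part (in $u$) of $\sum_{M\ge0}\binom{M}{i}u^M=\frac{u^i}{(1-u)^{i+1}}$ yields $\sum_{m\ge0}\binom{2m+1}{i}u^{2m}=\frac{u^{i-1}}{2}(\frac{1}{(1-u)^{i+1}}-\frac{(-1)^i}{(1+u)^{i+1}})$, so after multiplying by $(1-u^2)=(1-u)(1+u)$ the left side equals $\frac{i!\,u^{i-1}}{2}(\frac{1+u}{(1-u)^i}-\frac{(-1)^i(1-u)}{(1+u)^i})$. On the right, differentiating Lemma \ref{expstir} with $m=i$ gives $\frac{e^t(e^t-1)^{i-1}}{(i-1)!}=\sum_{N\ge i}\stirs{N}{i}\frac{t^{N-1}}{(N-1)!}$, and extracting the even part in $t$ identifies $\sum_{n\ge0}\stirs{2n+1}{i}\frac{t^{2n}}{(2n)!}$ with $\frac{1}{2(i-1)!}(e^t(e^t-1)^{i-1}+e^{-t}(e^{-t}-1)^{i-1})$. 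It then remains to feed in $e^t=\frac{1+u}{1-u}$, so that $e^t-1=\frac{2u}{1-u}$ and $e^{-t}-1=-\frac{2u}{1+u}$, and to check that $\frac{i!}{2^i}(e^t(e^t-1)^{i-1}+e^{-t}(e^{-t}-1)^{i-1})$ collapses to the same $u$-expression as the left side.

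The main obstacle is precisely this final collapse. It involves no new idea, only careful sign- and power-of-two bookkeeping: the substitution gives $e^t(e^t-1)^{i-1}=2^{i-1}u^{i-1}\frac{1+u}{(1-u)^i}$ and $e^{-t}(e^{-t}-1)^{i-1}=(-1)^{i-1}2^{i-1}u^{i-1}\frac{1-u}{(1+u)^i}$, and one must verify that the sign $-(-1)^i=(-1)^{i-1}$ attached to the second partial-fraction term agrees with the $(-1)^{i-1}$ produced by $(e^{-t}-1)^{i-1}$, while the prefactor $\frac{i!}{2^i}$ on the right absorbs the common $2^{i-1}$ to become $\frac{i!}{2}$, matching the left. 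Once these match, the per-$i$ identity holds and the proposition follows; the entire engine of the proof is the single substitution $u=\ta{}$ together with Lemma \ref{expstir}.
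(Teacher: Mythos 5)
Your argument is correct --- I checked the per-$i$ identity and the final substitution: with $e^t=(1+u)/(1-u)$ one gets $e^t(e^t-1)^{i-1}=2^{i-1}u^{i-1}(1+u)/(1-u)^i$ and $e^{-t}(e^{-t}-1)^{i-1}=(-1)^{i-1}2^{i-1}u^{i-1}(1-u)/(1+u)^i$, the signs and powers of two match, and summing against $\stirs{k}{i}$ recovers \eqref{eqsasaki}. Be aware, though, that the paper contains no proof of Proposition \ref{sasaki} at all: it is stated in the Preliminaries as an observation of Sasaki, with only a citation to \cite{Sasaki}. So there is no in-paper argument to compare with; the relevant comparison is with the technique the paper uses for its \emph{analogous} double-Stirling formulas (Propositions \ref{stircota} and \ref{expsym}). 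There the author starts from the two-variable generating function of Lemma \ref{genfuncose} and expands $\bigl(1-\tfrac{1}{2}(e^{\pm t}-1)(e^y-1)\bigr)^{-1}$ as a geometric series, which yields formulas of the shape \eqref{eqsasaki} for all $k$ at once; indeed \eqref{eqsasaki} itself drops out of Lemma \ref{genfuncose} in two lines that way. Your route is genuinely different: you work with one $k$ at a time in the single-variable generating function, rationalize via $u=\ta{}$, and linearize $(2m+1)^k$ through $\ell^k=\sum_i\stirs{k}{i}\,i!\binom{\ell}{i}$. What this buys is self-containedness --- you need only Lemma \ref{expstir} and elementary partial fractions, not the (cited, unproved in this paper) Lemma \ref{genfuncose}; what it costs is that the bivariate structure underlying the duality results of Section 4 stays hidden.

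One caveat that your own computation makes visible: for $k=0$ the term $i=0$ does \emph{not} drop, and your first display gives $\co{2n}{0}=\delta_{n,0}$, so $\co{0}{0}=1$ while the right-hand side of \eqref{eqsasaki} is an empty sum. The proposition as stated for all $n,k\in\Z_{\geq 0}$ therefore fails at $n=k=0$; your proof is valid exactly for $k\geq 1$, and you should state that restriction explicitly rather than leave it implicit in the remark that $\stirs{k}{0}=0$.
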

	
	\begin{proposition}[{\cite[Proposition 3.3]{Pa}\label{recurcose}}]
		For $n\in\Z_{\geq0}$ and $k\in\Z$, we have
		\[
		\co{n}{k-1}=\sum_{m=0}^{\lfloor n/2\rfloor} \binom{n+1}{2m+1} \co{n-2m}{k}.
		\]
	\end{proposition}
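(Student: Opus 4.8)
The plan is to exploit the elementary differential relation satisfied by the level-two polylogarithm, namely
\[
\mm{A}_{k-1}(z)=z\,\frac{d}{dz}\mm{A}_{k}(z),
\]
which follows at once from ${\mm{A}}_k(z)=2\sum_{n\geq0}z^{2n+1}/(2n+1)^k$ by differentiating term by term and multiplying by $z$; this is a formal identity valid for every $k\in\Z$. I would then substitute $z=\ta{}$ and apply the chain rule together with the identity
\[
\frac{2\ta{}}{1-\ta{2}}=\sinh t,
\]
which is just $2\sinh(t/2)\cosh(t/2)=\sinh t$ after noting $1-\ta{2}=\cosh^{-2}(t/2)$. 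Since $\frac{d}{dt}\ta{}=\tfrac12(1-\ta{2})$, the chain rule converts the $z$-derivative into a $t$-derivative and gives
\[
\frac{\mm{A}_{k-1}(\ta{})}{\sinh t}=\frac{d}{dt}\,\mm{A}_{k}(\ta{}).
\]

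Next I would unfold the right-hand side using the defining generating function \eqref{genecosecant}. Writing $\mm{A}_{k}(\ta{})=\sinh t\cdot\sum_{n\geq0}\co{n}{k}\,t^n/n!$ and taking the exponential Cauchy product with $\sinh t=\sum_{j\geq0}t^{2j+1}/(2j+1)!$, I obtain
\[
\mm{A}_k(\ta{})=\sum_{N\geq0}\left(\sum_{2j+1\leq N}\binom{N}{2j+1}\co{N-2j-1}{k}\right)\frac{t^N}{N!}.
\]
Differentiating in $t$ shifts the index $N\mapsto N+1$, so the coefficient of $t^n/n!$ in $\frac{d}{dt}\mm{A}_k(\ta{})$ is $\sum_{2j+1\leq n+1}\binom{n+1}{2j+1}\co{n-2j}{k}$, where the constraint $2j+1\leq n+1$ is exactly $j\leq\lfloor n/2\rfloor$.

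Finally, comparing this with the coefficient of $t^n/n!$ on the left-hand side, which is $\co{n}{k-1}$ by \eqref{genecosecant} read at level $k-1$, yields the asserted recursion after renaming the summation index. I do not anticipate a serious obstacle here: the two structural ingredients are the term-by-term differentiation of $\mm{A}_k$ and the identity $2\ta{}/(1-\ta{2})=\sinh t$, and everything else is coefficient comparison. The one point demanding care is the bookkeeping of the binomial factors in the exponential Cauchy product and verifying that, after the index shift from the $t$-derivative, the summation range collapses precisely to $0\leq m\leq\lfloor n/2\rfloor$.
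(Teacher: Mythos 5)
Your proof is correct: the identity $\mm{A}_{k-1}(z)=z\frac{d}{dz}\mm{A}_k(z)$, the computation $\frac{d}{dt}\mm{A}_k(\ta{})=\mm{A}_{k-1}(\ta{})/\sinh t$, and the coefficient extraction after the Cauchy product with $\sinh t$ all check out, including the index range $0\le m\le\lfloor n/2\rfloor$. The paper itself gives no proof of this proposition (it is quoted from \cite{Pa}), and your argument is precisely the standard differentiation-of-the-generating-function proof used there, so there is nothing to flag.
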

	
	\begin{lemma}[{\cite[Section 3]{Pa}\label{genfuncose}}]
		Let $f(t,y)$ be the generating function of $\co{n}{-k}$ such as
		\[
		f(t,y) = \sum_{n=0}^{\infty}\sum_{k=0}^{\infty} \co{n}{-k}\frac{t^n}{n!}\frac{y^k}{k!}.
		\]
		Then we have
		\[
		f(t,y)=1+\frac{e^t(e^y-1)}{1+e^t+e^y-e^{t+y}}+\frac{e^{-t}(e^y-1)}{1+e^{-t}+e^y-e^{-t+y}}.
		\]
	\end{lemma}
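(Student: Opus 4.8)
The plan is to start from the explicit formula \eqref{eqsasaki} of Proposition \ref{sasaki}, substitute it into the double series, and recognize the two resulting factor sums in closed form via Lemma \ref{expstir}. First I would use the vanishing of odd-indexed polycosecant numbers (the Remark) to rewrite $f(t,y)=\sum_{n,k\geq0}\co{2n}{-k}\frac{t^{2n}}{(2n)!}\frac{y^k}{k!}$. A key preliminary observation is that \eqref{eqsasaki} in fact holds in the extended form $\co{2n}{-k}=\sum_{i=1}^{\infty}\frac{i!(i-1)!}{2^{i-1}}\stirs{k}{i}\stirs{2n+1}{i}$ for every $(n,k)\neq(0,0)$, since $\stirs{k}{i}$ and $\stirs{2n+1}{i}$ vanish beyond the range $\min\{2n+1,k\}$; at $(n,k)=(0,0)$ this expression evaluates to $0$ because $\stirs{0}{i}=0$ for $i\geq1$, whereas $\co{0}{0}=1$. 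This single discrepancy is exactly what produces the leading constant in the claimed formula, so I would write $f(t,y)=1+g(t,y)$, where $g$ is the double series built from the extended formula.

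Next I would interchange the order of summation in $g$, grouping by $i$, to obtain $g(t,y)=\sum_{i\geq1}\frac{i!(i-1)!}{2^{i-1}}\left(\sum_{k}\stirs{k}{i}\frac{y^k}{k!}\right)\left(\sum_{n}\stirs{2n+1}{i}\frac{t^{2n}}{(2n)!}\right)$. Lemma \ref{expstir} immediately gives $\sum_k\stirs{k}{i}\frac{y^k}{k!}=\frac{(e^y-1)^i}{i!}$. The main technical point is the inner $t$-sum, where the Stirling index $2n+1$ is offset from the power $t^{2n}$. I would resolve this by observing that $\sum_n\stirs{2n+1}{i}\frac{t^{2n}}{(2n)!}$ is the even part of $\frac{d}{dt}\frac{(e^t-1)^i}{i!}=\frac{(e^t-1)^{i-1}e^t}{(i-1)!}$, hence equals $\frac{1}{2(i-1)!}\bigl[(e^t-1)^{i-1}e^t+(e^{-t}-1)^{i-1}e^{-t}\bigr]$.

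Substituting these two closed forms and simplifying the constants (the product $\frac{i!(i-1)!}{2^{i-1}}\cdot\frac{1}{i!}\cdot\frac{1}{2(i-1)!}$ collapses to $\frac{1}{2^i}$), I would be left with $g(t,y)=e^t\sum_{i\geq1}\frac{(e^y-1)^i(e^t-1)^{i-1}}{2^i}+e^{-t}\sum_{i\geq1}\frac{(e^y-1)^i(e^{-t}-1)^{i-1}}{2^i}$. Each is a geometric series summing to $\frac{e^y-1}{2-(e^y-1)(e^t-1)}$ and its $t\mapsto-t$ counterpart; since $2-(e^y-1)(e^t-1)=1+e^t+e^y-e^{t+y}$, multiplying through by $e^{\pm t}$ yields precisely the two fractions in the statement, and adding back the constant $1$ completes the proof.

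The step I expect to be the main obstacle is the inner $t$-sum, i.e.\ correctly handling the index offset between $\stirs{2n+1}{i}$ and $t^{2n}$ via differentiation and the even-part operator; a secondary point requiring care is the bookkeeping at $(n,k)=(0,0)$ that accounts for the additive constant $1$.
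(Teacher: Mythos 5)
Your proof is correct, but it takes a genuinely different route from the one the paper relies on. The paper itself gives no proof of Lemma \ref{genfuncose}; it quotes it from \cite[Section 3]{Pa}, and the natural derivation (the one this paper replays in Section 4, in the proofs of Lemma \ref{gencosym} and Proposition \ref{expsym}) is the direct one: sum the defining series \eqref{genecosecant} over $k$ using \eqref{genli}, i.e.\ $\sum_{l\geq0}{\mm{Li}}_{-l}(z)\frac{y^l}{l!}=\frac{e^yz}{1-e^yz}$, substitute $z=\ta{}=\frac{e^t-1}{e^t+1}$, and simplify the resulting rational function of $e^t,e^y$. You instead resum Sasaki's explicit formula \eqref{eqsasaki} against Lemma \ref{expstir}. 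Every step of your plan checks out: the even-part identity
\begin{align*}
\sum_{n\geq0}\stirs{2n+1}{i}\frac{t^{2n}}{(2n)!}=\frac{1}{2(i-1)!}\left((e^t-1)^{i-1}e^t+(e^{-t}-1)^{i-1}e^{-t}\right)
\end{align*}
is right (differentiate $\frac{(e^t-1)^i}{i!}=\sum_m\stirs{m}{i}\frac{t^m}{m!}$ and symmetrize in $t\mapsto-t$), the constants do collapse to $2^{-i}$, the geometric series is legitimate as a formal power series because $\frac{1}{2}(e^t-1)(e^y-1)$ has no constant term, and $2-(e^y-1)(e^t-1)=1+e^t+e^y-e^{t+y}$. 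Your bookkeeping at $(n,k)=(0,0)$ is a genuine and necessary catch: Proposition \ref{sasaki} as literally stated fails there, since $\co{0}{0}=1$ while the right-hand side of \eqref{eqsasaki} is an empty sum, and this single discrepancy is exactly the additive constant $1$ in the closed form. As for what each approach buys: yours takes Proposition \ref{sasaki} as input, which is fine inside this paper (it is quoted as known) and is not circular in the literature, since \eqref{eqsasaki} can be obtained from the explicit formula of Proposition \ref{explicose} without reference to the two-variable generating function; the direct route is shorter and self-contained from the definition. Conversely, your computation is essentially the paper's proof of Proposition \ref{expsym} run in reverse, so it simultaneously exhibits the equivalence between Sasaki's double-Stirling expression and the closed rational form, which is the mechanism the paper exploits in Sections 3 and 4.
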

	
	It is known that poly-Bernoulli numbers satisfy the following recurrence formulas.
	
	\begin{proposition}[{\cite[Theorem 3.2]{OS2}}]
		For $k\in\Z$ and $m, n\in\Z_{\geq1}$ with $m\geq n$, we have
		\begin{align*}
			\sum_{m\geq j\geq i\geq0}^{}&(-1)^i\stirf{m+2}{i+1}B_{n}^{(-k-j)}=0,\\
			\sum_{j=0}^{m}&(-1)^j\stirf{m+1}{j+1}C_{n-1}^{(-k-j)}=0.
		\end{align*}
	\end{proposition}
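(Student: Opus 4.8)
The plan is to read each identity as the statement that a \emph{constant-coefficient linear recurrence in the single variable $k$} annihilates the sequence $(B_n^{(-k)})_{k}$, resp.\ $(C_{n-1}^{(-k)})_{k}$. Introduce the shift operator $E$ acting on sequences indexed by $k$ by $E\,a_k=a_{k+1}$, so that $E^jB_n^{(-k)}=B_n^{(-k-j)}$. Summing the first displayed sum over $i$ for each fixed $j$ turns it into $\big(\sum_{j=0}^m A_j E^j\big)B_n^{(-k)}$ with $A_j=\sum_{i=0}^j(-1)^i\stirf{m+2}{i+1}$, and the second sum is literally $\big(\sum_{j=0}^m(-1)^j\stirf{m+1}{j+1}E^j\big)C_{n-1}^{(-k)}$. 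Thus it suffices to (i) determine the ``eigenvalues'' of these two sequences under $E$, and (ii) show that the two operator polynomials vanish at all of those eigenvalues.

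For (i) I would first record closed ``Dirichlet-type'' formulas valid for \emph{all} $k\in\Z$. Expanding $\mm{Li}_k(1-e^{-t})=\sum_{\nu\ge1}\nu^{-k}(1-e^{-t})^{\nu-1}$ and using Lemma~\ref{expstir} in the form $(1-e^{-t})^m=m!\sum_{n}(-1)^{n-m}\stirs{n}{m}t^n/n!$, I obtain
\[
B_n^{(k)}=\sum_{m=0}^{n}(-1)^{n-m}m!\stirs{n}{m}\,(m+1)^{-k},
\]
and, using $(1-e^{-t})^{\nu-1}e^{-t}=(1-e^{-t})^{\nu-1}-(1-e^{-t})^{\nu}$, an analogous formula expressing $C_n^{(k)}$ as a combination of $(m+1)^{-k}$ for $0\le m\le n$. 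These are identities of rational numbers for every $k\in\Z$, so the recurrences need not be restricted to $k\ge0$. The crucial bookkeeping is which bases actually occur: since $\stirs{n}{0}=0$ for $n\ge1$, the base $m+1=1$ is \emph{absent} from $B_n^{(-k)}$, so $B_n^{(-k)}$ is a combination of $2^k,\dots,(n+1)^k$; whereas $C_{n-1}^{(-k)}$ is a combination of $1^k,\dots,n^k$. Hence $E$ has eigenvalues in $\{2,\dots,n+1\}$ on the first sequence and in $\{1,\dots,n\}$ on the second.

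For (ii) I would compute the operator polynomials using the signed Stirling identity $\sum_i(-1)^{M-i}\stirf{M}{i}x^i=x(x-1)\cdots(x-M+1)$. A direct calculation gives $\sum_{j=0}^m(-1)^j\stirf{m+1}{j+1}x^j=(-1)^m\prod_{r=1}^m(x-r)$, whose roots are exactly $\{1,\dots,m\}$; since $m\ge n$ this contains $\{1,\dots,n\}$, so the operator $\sum_j(-1)^j\stirf{m+1}{j+1}E^j$ kills $C_{n-1}^{(-k)}$, which is the second identity. For the first sum one must first handle the inner partial sums $A_j$: writing $\sum_{j=i}^m E^j=(E^{m+1}-E^i)/(E-1)$, substituting the value $\sum_{i}(-1)^i\stirf{m+2}{i+1}=(-1)^m$ and the polynomial $\sum_i(-1)^i\stirf{m+2}{i+1}x^i=(-1)^{m+1}\big[(x-1)\cdots(x-m-1)-x^{m+1}\big]$, the two top-degree terms cancel and the resulting numerator is divisible by $E-1$, leaving $\sum_{j=0}^m A_jE^j=(-1)^m\prod_{r=2}^{m+1}(E-r)$. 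Its roots are $\{2,\dots,m+1\}\supseteq\{2,\dots,n+1\}$ (again using $m\ge n$), so this operator annihilates $B_n^{(-k)}$, proving the first identity.

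I expect the single delicate point to be this last operator-polynomial computation for the $B$-recurrence: the appearance of the partial sums $A_j$, the exact cancellation of the $x^{m+1}$ terms, and the exact divisibility by $x-1$ are what make the roots come out as $\{2,\dots,m+1\}$ rather than $\{1,\dots,m+1\}$ — and it is precisely the missing root $1$ that matches the absence (for $n\ge1$) of the base $1$ in $B_n^{(-k)}$. Everything else is routine once the two explicit formulas and the Stirling generating identity are in place.
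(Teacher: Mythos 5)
Your proof is correct. One remark on positioning: the paper never proves this proposition---it is quoted from \cite{OS2} as a preliminary---so the natural comparison is with the paper's proof of its own level-two analogue, the recurrences \eqref{cose} and \eqref{cota}. That proof is your argument stripped of the operator language: substitute an explicit formula writing the numbers as finite linear combinations of $k$-th powers of integers, interchange the sums, and observe that the alternating Stirling sum becomes a falling-factorial product that vanishes at every base which occurs; your identity $\sum_{j=0}^m(-1)^j\stirf{m+1}{j+1}x^j=(-1)^m\prod_{r=1}^m(x-r)$ is exactly the product used there. So for the $C$-identity the two routes coincide (your explicit formula is right; note only the harmless slip that the expansion you display is that of $\mm{Li}_k(1-e^{-t})/(1-e^{-t})$, not of $\mm{Li}_k(1-e^{-t})$ itself). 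The genuinely new content in your write-up is the treatment of the double sum in the $B$-identity, which the paper never needs because its analogues are single sums: your computation of the partial sums $A_j$ and the factorization $\sum_{j=0}^m A_jx^j=(-1)^m\prod_{r=2}^{m+1}(x-r)$ is correct (the numerator $(-1)^m\prod_{r=1}^{m+1}(x-r)$ is exactly divisible by $x-1$), and the shifted root set $\{2,\dots,m+1\}$ is precisely what makes the hypothesis $m\ge n$ sufficient, since the base $n+1$ occurs in $B_n^{(-k)}$ with nonzero coefficient $n!\stirs{n}{n}=n!$. As a bonus, your eigenvalue bookkeeping shows the ranges are sharp and even detects an overstatement elsewhere in the paper: the single-sum variant recorded in the Remark after \eqref{cose}--\eqref{cota}, namely $\sum_{j=0}^m(-1)^j\stirf{m+1}{j+1}B_n^{(-k-j)}=0$ for $0\le n\le m$, has operator polynomial with roots $\{1,\dots,m\}$ only and therefore fails at $n=m$ (already for $m=n=1$ one has $B_1^{(-k)}-B_1^{(-k-1)}=2^k-2^{k+1}\ne0$); it requires $n\le m-1$.
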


    Also, there are some studies to determine the denominator of poly-Bernoulli numbers. For an odd prime number $p$, we denote by $\ordp$ the $p$-adic valuation and let
    \[
    \zetp=\left\{\frac{a}{b}\in\Q~ \middle| ~\ordp(b)=0\right\}.
    \]
    
    The Clausen von-Staudt theorem is as follows.
    
    \begin{theorem}[{\cite[Theorem 3.1]{AIK}\label{theoremvon}}]
    	Let $n$ be 1 or an even positive integer. Then we have
    	\[
    	B_n+\sum_{\substack{p:\mm{prime}\\ (p-1)\mid n}}\frac{1}{p} \in\Z.
    	\]
    \end{theorem}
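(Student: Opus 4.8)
The plan is to reduce the statement to a one-prime-at-a-time $p$-adic computation, after first rewriting $B_n$ in terms of Stirling numbers of the second kind. First I would substitute $u=e^t-1$ in
\[
\frac{t}{e^t-1}=\frac{\log(1+u)}{u}=\sum_{m=0}^{\infty}\frac{(-1)^m}{m+1}u^m,
\]
and expand each $u^m=(e^t-1)^m$ by Lemma \ref{expstir}. Comparing coefficients of $t^n/n!$ gives the closed formula
\[
B_n=\sum_{m=0}^{n}\frac{(-1)^m\,m!}{m+1}\stirs{n}{m},
\]
which presents $B_n$ as an integral combination of the rationals $\frac{m!}{m+1}$, since each $\stirs{n}{m}\in\Z$. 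It then suffices to prove, for every prime $p$, that $B_n+\frac1p\,[\,(p-1)\mid n\,]$ lies in $\zetp$ (here $[\,(p-1)\mid n\,]$ denotes $1$ if $(p-1)\mid n$ and $0$ otherwise), because a rational number that is $p$-integral for all $p$ is an integer, and the off-diagonal terms $\frac1q$ with $q\neq p$ are automatically $p$-integral.

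Next, fixing an odd prime $p$, I would isolate the unique term of the sum that is not $p$-integral. Writing $p^a\parallel(m+1)$, a short application of Legendre's formula shows $\ordp\!\big(\tfrac{m!}{m+1}\big)=\ordp(m!)-a\ge 0$ in every case except $m+1=p$, where $\ordp((p-1)!)=0<1$. Hence modulo $\zetp$ we have $B_n\equiv(-1)^{p-1}\frac{(p-1)!}{p}\stirs{n}{p-1}$. Wilson's theorem gives $(p-1)!\equiv-1\pmod p$, so $\frac{(p-1)!}{p}\equiv-\frac1p\pmod{\Z}$, and since $(-1)^{p-1}=1$ for odd $p$ this yields
\[
B_n+\frac{\stirs{n}{p-1}}{p}\in\zetp .
\]

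Finally I would pin down $\stirs{n}{p-1}\bmod p$. Lemma \ref{bekistir} gives $\stirs{n}{p-1}=\frac{(-1)^{p-1}}{(p-1)!}\sum_{l=0}^{p-1}(-1)^l\binom{p-1}{l}l^n$; reducing modulo $p$ with $\binom{p-1}{l}\equiv(-1)^l$ and $(p-1)!\equiv-1$ collapses the sum to $-\sum_{l=0}^{p-1}l^n$, and the classical power-sum congruence $\sum_{l=1}^{p-1}l^n\equiv-[\,(p-1)\mid n\,]\pmod p$ gives $\stirs{n}{p-1}\equiv[\,(p-1)\mid n\,]\pmod p$. Substituting back shows $B_n+\frac1p\,[\,(p-1)\mid n\,]\in\zetp$ for every odd $p$, and splitting off the single $p=q$ term from $B_n+\sum_{(p-1)\mid n}\frac1p$ for each prime $q$ then proves $q$-integrality, hence integrality. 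The hard part will be the prime $p=2$: there $m+1=4$ is a second potential source of non-$2$-integrality, so I would check separately that $\stirs{n}{3}$ is even for the relevant $n$ while tracking the lone contribution $\frac{(-1)^1 1!}{2}\stirs{n}{1}=-\frac12$; this is precisely what produces the $+\frac12$ correction at $p=2$, and the case $n=1$ then follows at once from $B_1=-\frac12$.
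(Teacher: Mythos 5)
The paper itself contains no proof of this statement to compare against: Theorem \ref{theoremvon} is the classical Clausen--von Staudt theorem, quoted from \cite{AIK} as a known result. Your argument is a correct and complete proof of it, and it follows the standard route, which is also exactly the strategy the paper later uses for its polycosecant analogue (Theorem \ref{vonco}): express the number as a $\Z$-linear combination of the quantities $\frac{m!}{m+1}$ via Stirling numbers of the second kind, analyze $p$-integrality term by term, and handle the unique non-$p$-integral term by Wilson's theorem together with a congruence for $\stirs{n}{p-1}$ modulo $p$. The individual steps all check out: the substitution $u=e^t-1$ gives $B_n=\sum_{m=0}^{n}\frac{(-1)^m m!}{m+1}\stirs{n}{m}$, consistent with the paper's convention $B_n=C_n^{(1)}$ (so $B_1=-\frac{1}{2}$); your valuation analysis correctly isolates $m+1=p$ as the only bad index for odd $p$ (for $m+1=p^2$ one has $\ordp((p^2-1)!)=p-1\geq 2$, which is precisely where oddness of $p$ enters, and which explains why $m+1=4$ becomes a second exception at $p=2$); and $\stirs{n}{p-1}\equiv[\,(p-1)\mid n\,]\bmod p$ does follow from Lemma \ref{bekistir}, $\binom{p-1}{l}\equiv(-1)^l$, and the power-sum congruence. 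The one item you defer --- that $\stirs{n}{3}$ is even for the relevant $n$ --- holds: $\stirs{n}{3}=\frac{3^{n-1}+1}{2}-2^{n-1}$, and $4\mid(3^{n-1}+1)$ when $n$ is even, so the $m=3$ term $-\frac{3}{2}\stirs{n}{3}$ is $2$-integral, only $m=1$ produces the $-\frac{1}{2}$, and the case $n=1$ is the direct check $B_1+\frac{1}{2}=0$. Writing out these two small computations (the Legendre-formula case analysis and the parity of $\stirs{n}{3}$) would make your proof fully self-contained.
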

    
    This can be generalized to that for poly-Bernoulli numbers as follows.
    
    \begin{theorem}[{\cite[Theorem 14.7]{AIK}\label{clausenpb}}]
    	Let $p$ be a prime number. For $k\in\Z_{\geq 2}$ and $n\in\Z_{\geq1}$ satisfying $k+2\leq p\leq n+1$, we have following results.
    	\begin{enumerate}
    		\item When $(p-1) \mid n$, $p^kB_n^{(k)}\in\zetp$ and
    		\[
    		p^kB_n^{(k)}\equiv-1 \bmod p.
    		\]
    		\item When $(p-1) \nmid n$, $p^{k-1}B_n^{(k)}\in\zetp$ and
    		\[
    		p^{k-1}B_n^{(k)} \equiv \begin{cases}
    			\displaystyle \frac{1}{p}\stirs{n}{p-1}-\frac{n}{2^k} \bmod{p}& (n\equiv1\bmod{p-1}), \\
    			\displaystyle \frac{(-1)^{n-1}}{p}\stirs{n}{p-1} \bmod{p} & (otherwise).
    		\end{cases}
    		\]
    	\end{enumerate}
    \end{theorem}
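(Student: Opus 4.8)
The plan is to work from an explicit Stirling-number formula for the positive-index numbers $B_n^{(k)}$ and to isolate, $p$-adically, the few terms that survive modulo $p$. Expanding $\mm{Li}_k(1-e^{-t})/(1-e^{-t})=\sum_{j\geq0}(1-e^{-t})^j/(j+1)^k$ and applying Lemma \ref{expstir} to $(1-e^{-t})^j=(-1)^j(e^{-t}-1)^j$, I would first record
\[
B_n^{(k)}=\sum_{i=1}^{n+1}\frac{(-1)^{n+i-1}(i-1)!}{i^k}\stirs{n}{i-1},
\]
where $i=j+1$. Since each $\stirs{n}{i-1}\in\Z$ and $(i-1)!$ contributes only nonnegative $p$-valuation, the sole source of negative valuation is the denominator $i^k$, which is divisible by $p$ exactly when $p\mid i$.

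Next I would split the sum by $\ordp(i)$. Writing $i=mp$ with $1\leq m<p$, the $p$-valuation of the $i$-th term equals $(m-1)-k+\ordp\stirs{n}{mp-1}$, while for $i=p^2$ it equals $(p-1)-2k+\ordp\stirs{n}{p^2-1}$. Multiplying through by $p^k$ in case (i) and by $p^{k-1}$ in case (ii) and using $p\geq k+2$ (together with $k\geq2$ in case (ii)), one checks that every term drops out modulo $p$ except $i=p$ in case (i), and except $i=p$ and $i=2p$ in case (ii); the only borderline survivor is $i=p^2$, and only when $p=k+2$, where its scaled valuation is exactly $\ordp\stirs{n}{p^2-1}$. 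To dispose of that term I must show $p\mid\stirs{n}{p^2-1}$ whenever $(p-1)\nmid n$. In all cases this reduces the theorem to congruences modulo $p$ for $\stirs{n}{p-1}$, $\stirs{n}{2p-1}$ and $\stirs{n}{p^2-1}$, and the claims $p^kB_n^{(k)}\in\zetp$, $p^{k-1}B_n^{(k)}\in\zetp$ follow at once.

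The congruence for $\stirs{n}{p-1}$ is the easy one and follows directly from Lemma \ref{bekistir}: reducing $\binom{p-1}{l}\equiv(-1)^l$, $(p-1)!\equiv-1$ (Wilson), and $\sum_{l=1}^{p-1}l^n\equiv-1$ or $0$ (Fermat, according as $(p-1)\mid n$ or not) gives $\stirs{n}{p-1}\equiv1$ when $(p-1)\mid n$ and $\equiv0$ otherwise. Substituting this into the surviving $i=p$ term, whose scaled value is $(-1)^{n+p-1}(p-1)!\stirs{n}{p-1}$ in case (i) and $(-1)^{n+p-1}(p-1)!\stirs{n}{p-1}/p$ in case (ii), I get $p^kB_n^{(k)}\equiv(-1)^{n+p}\equiv-1\bmod p$ for case (i) (here $(p-1)\mid n$ forces $n$ even while $p$ is odd), and the leading piece $(-1)^{n-1}\stirs{n}{p-1}/p$ for case (ii), which becomes $\frac1p\stirs{n}{p-1}$ or $\frac{(-1)^{n-1}}{p}\stirs{n}{p-1}$ once the parity of $n$ is recorded.

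Finally I would evaluate the $i=2p$ term in case (ii). Since $(2p-1)!/p\equiv((p-1)!)^2\equiv1\bmod p$, that term contributes $\frac{(-1)^{n-1}}{2^k}\stirs{n}{2p-1}$ modulo $p$, so the remaining $-n/2^k$ correction amounts to the congruence
\[
\stirs{n}{2p-1}\equiv\begin{cases}-n\bmod p&(n\equiv1\bmod p-1),\\0\bmod p&((p-1)\nmid n,\ n\not\equiv1\bmod p-1).\end{cases}
\]
I expect this to be the main obstacle. Unlike $\stirs{n}{p-1}$, here the factor $1/(2p-1)!$ carries one power of $p$, so Lemma \ref{bekistir} determines $\stirs{n}{2p-1}$ only after the alternating sum $\sum_l(-1)^l\binom{2p-1}{l}l^n$ is understood modulo $p^2$; this requires the mod-$p^2$ refinement of Lucas' theorem (equivalently Wolstenholme/Jacobsthal-type congruences for the $\binom{2p-1}{l}$) combined with a second use of Fermat. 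The same technique should also deliver the auxiliary vanishing $p\mid\stirs{n}{p^2-1}$ for $(p-1)\nmid n$ needed in the borderline case above. Combining the $i=p$ and $i=2p$ contributions then yields the two displayed formulas and completes the proof.
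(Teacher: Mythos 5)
Your overall architecture is sound and is in fact the same as that of the proof in Arakawa--Ibukiyama--Kaneko which this paper cites (the paper does not reprove Theorem \ref{clausenpb}; it quotes it, states Lemma \ref{vonlemma} as the key ingredient, and reuses the same method later for Theorem \ref{vonco}). Your explicit formula $B_n^{(k)}=\sum_{i=1}^{n+1}\frac{(-1)^{n+i-1}(i-1)!}{i^k}\stirs{n}{i-1}$ is correct, the valuation bookkeeping for $i=p$, $i=2p$ and the borderline $i=p^2$ (when $p=k+2$) is right, and the $i=p$ analysis via Lemma \ref{bekistir}, Wilson and Fermat, together with the unit computation $(2p-1)!/p\equiv((p-1)!)^2\equiv1\bmod p$ and the parity observations, correctly reduces both displayed congruences to statements about $\stirs{n}{p-1}$, $\stirs{n}{2p-1}$ and $\stirs{n}{p^2-1}$. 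One minor incompleteness: your case split only treats $i=mp$ with $1\leq m<p$ and $i=p^2$; you must also dispose of $i=ap^e$ with $a\geq2$ or $e\geq3$ (a Legendre-formula estimate on $\ordp((ap^e-1)!)$ does this), but that is routine.

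The genuine gap is the step you yourself call ``the main obstacle'': the congruences
\[
\stirs{n}{2p-1}\equiv\begin{cases}-n \bmod p & (n\equiv1\bmod{p-1}),\\ 0 \bmod p & ((p-1)\nmid n,\ n\not\equiv1\bmod{p-1}),\end{cases}
\qquad\quad
\stirs{n}{p^2-1}\equiv 0 \bmod p \ \ ((p-1)\nmid n),
\]
are asserted but never proven, and your proposed route --- feeding mod-$p^2$ Lucas/Wolstenholme--Jacobsthal congruences for $\binom{2p-1}{l}$ into Lemma \ref{bekistir} --- is left entirely unexecuted; it is genuinely delicate, because $\ordp((2p-1)!)=1$ forces you to control the alternating sum modulo $p^2$, not merely modulo $p$. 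What you have missed is that both statements are immediate special cases of Lemma \ref{vonlemma} (\cite[Lemma 14.8]{AIK}), which the paper records directly after the theorem precisely for this purpose. Taking $a=2$ there: $\stirs{n}{2p-1}\equiv\binom{c-1}{1}=c-1\bmod p$ if $n=1+c(p-1)$ with $c\geq2$, and $\equiv0$ otherwise; since $n\equiv 1-c\bmod p$, one has $c-1\equiv-n\bmod p$, which is exactly your required congruence (and the excluded case $c=1$, i.e.\ $n=p$, is harmless since then $-n\equiv0$). Taking $a=p$: $\stirs{n}{p^2-1}$ can be nonzero mod $p$ only if $n=p-1+c(p-1)=(c+1)(p-1)$, i.e.\ only if $(p-1)\mid n$, which kills the borderline $i=p^2$ term in case (ii). With Lemma \ref{vonlemma} invoked at these two points your argument closes; without it, the proof is incomplete at its crux.
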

    
    To prove Theorem \ref{clausenpb}, we need the following lemma.
    
    \begin{lemma}[{\cite[Lemma 14.8]{AIK}\label{vonlemma}}]
    	For $n$ and $a\in\Z_{\geq1}$, we have
    	\[
    	\stirs{n}{ap-1} \equiv \begin{cases}
    		\displaystyle \binom{c-1}{a-1} \bmod{p}& (if ~~n=a-1+c(p-1)~~for~some~~c\geq a), \\
    		0 \bmod{p} & (otherwise).
    	\end{cases}
    	\]
    \end{lemma}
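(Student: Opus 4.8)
The plan is to work with the ordinary generating function of the Stirling numbers of the second kind in the upper index,
\[
g_m(x) := \sum_{n\geq m}\stirs{n}{m}x^n = \frac{x^m}{\prod_{j=1}^{m}(1-jx)},
\]
which follows immediately from the recursion $\stirs{n}{m}=\stirs{n-1}{m-1}+m\stirs{n-1}{m}$ recorded in the Definition: it yields $g_m(x)=\frac{x}{1-mx}g_{m-1}(x)$, and $g_0(x)=1$. Specializing to $m=ap-1$ reduces the whole problem to computing the denominator $\prod_{j=1}^{ap-1}(1-jx)$ modulo $p$. Since every coefficient $\stirs{n}{ap-1}$ is an integer and the denominator has constant term $1$, the defining identity $\prod_{j=1}^{ap-1}(1-jx)\cdot g_{ap-1}(x)=x^{ap-1}$ may be reduced coefficientwise to a genuine identity in $\F_p[[x]]$.

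The key step is to evaluate $\prod_{j=1}^{ap-1}(1-jx)\bmod p$ by sorting the index $j$ according to its residue class mod $p$. Among $1\leq j\leq ap-1$ there are exactly $a-1$ multiples of $p$ (namely $p,2p,\dots,(a-1)p$), each contributing a factor $\equiv 1$, while every nonzero residue $r\in\{1,\dots,p-1\}$ is represented exactly $a$ times. Hence
\[
\prod_{j=1}^{ap-1}(1-jx)\equiv\Bigl(\prod_{r=1}^{p-1}(1-rx)\Bigr)^{a}\bmod p.
\]
It then remains to show $\prod_{r=1}^{p-1}(1-rx)\equiv 1-x^{p-1}\bmod p$, which I would deduce from Fermat's little theorem: the polynomial $\prod_{r=1}^{p-1}(X-r)$ has the nonzero residues as its roots, hence is $\equiv X^{p-1}-1\bmod p$, and reversing its coefficients (i.e.\ forming the reciprocal polynomial) converts this into the desired identity. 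Combining the two displays gives $\prod_{j=1}^{ap-1}(1-jx)\equiv(1-x^{p-1})^{a}\bmod p$.

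Finally I would invert. Since $(1-x^{p-1})^{a}$ is a unit in $\F_p[[x]]$, the reduced identity yields, via the binomial series,
\[
\sum_{n}\stirs{n}{ap-1}x^n\equiv\frac{x^{ap-1}}{(1-x^{p-1})^{a}}=x^{ap-1}\sum_{c'\geq0}\binom{a+c'-1}{c'}x^{c'(p-1)}\bmod p.
\]
Reading off the coefficient of $x^n$, the class $\stirs{n}{ap-1}\bmod p$ vanishes unless $n=ap-1+c'(p-1)$ for some $c'\geq0$; rewriting this as $n=a-1+c(p-1)$ with $c=a+c'\geq a$ and simplifying $\binom{a+c'-1}{c'}=\binom{c-1}{a-1}$ reproduces exactly the two cases of the statement.

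I expect the only delicate points to be the two bookkeeping claims in the middle paragraph — that each nonzero residue occurs exactly $a$ times and that $\prod_{r=1}^{p-1}(1-rx)\equiv 1-x^{p-1}$ — together with the care needed to justify the coefficientwise reduction modulo $p$; everything after the product has been evaluated is a routine binomial-series extraction.
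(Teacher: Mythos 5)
Your proof is correct in all its steps: the generating-function identity for $\stirs{n}{m}$, the residue-class evaluation $\prod_{j=1}^{ap-1}(1-jx)\equiv\bigl(1-x^{p-1}\bigr)^{a}\bmod p$ via Fermat, and the binomial-series extraction with the reindexing $c=a+c'$ (so that $\binom{a+c'-1}{c'}=\binom{c-1}{a-1}$) reproduce exactly the two stated cases. Note that the paper gives no proof of its own --- the lemma is imported from \cite[Lemma 14.8]{AIK} --- and your argument is essentially the standard generating-function proof found there, so the approaches coincide.
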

	
	\section{Algebraic properties}
	
	\subsection{Explicit formulas and recurrence formulas}
	
	In this subsection, we give some explicit formulas and recurrence formulas of $\co{n}{k}$ and $\be{n}{k}.$
	
	\begin{proposition}\label{explicota}
		For $n\in\Z_{\geq0}$ and $k\in\Z$, we have
		\begin{align}
			\be{2n}{k}=\sum_{j=0}^{2n}\sum_{i=0}^{\lfloor j/2\rfloor} \frac{(-1)^j}{2^j}\frac{j!}{(2i+1)^k}\binom{j+1}{2i+1}\left(\frac{(j+1)(j+2)}{2}\stirs{2n}{j+2}+\stirs{2n+1}{j+1}\right).
		\end{align}
	\end{proposition}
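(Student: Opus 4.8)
\emph{Plan.} The starting point is the half-angle identity
\[
\frac{1}{\tanh t} = \frac{1}{\sinh t} + \ta{},
\]
which follows at once from $\ta{} = (\cosh t - 1)/\sinh t$, since then $\frac{1}{\sinh t} + \frac{\cosh t - 1}{\sinh t} = \frac{\cosh t}{\sinh t}$. Multiplying through by $\mm{A}_k(\ta{})$ and comparing with the defining generating functions, I would obtain
\[
\sum_{n=0}^{\infty}\be{n}{k}\frac{t^n}{n!} = \frac{\mm{A}_k(\ta{})}{\sinh t} + \mm{A}_k(\ta{})\,\ta{},
\]
so that $\be{2n}{k}$ is the sum of the coefficients of $t^{2n}/(2n)!$ coming from the two terms on the right. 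This split is precisely what produces the two Stirling contributions appearing in the claim.

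The first term is the generating function \eqref{genecosecant} of the polycosecant numbers, so its contribution to $\be{2n}{k}$ is $\co{2n}{k}$. Substituting the explicit formula of Proposition \ref{explicose} (with its inner summation index renamed $l$) and reindexing $l = j+1$, I would rewrite
\[
\co{2n}{k} = \sum_{j=0}^{2n}\sum_{i=0}^{\lfloor j/2\rfloor}\frac{(-1)^j}{2^j}\frac{j!}{(2i+1)^k}\binom{j+1}{2i+1}\stirs{2n+1}{j+1},
\]
using the identity $\binom{j+1}{2i+1} = \frac{j+1}{2i+1}\binom{j}{2i}$ to convert the $(2i+1)^{-(k+1)}$ and $\binom{j}{2i}$ of Proposition \ref{explicose} into the $(2i+1)^{-k}$ and $\binom{j+1}{2i+1}$ of the target. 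This is exactly the $\stirs{2n+1}{j+1}$-part.

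For the second term, the definition of $\mm{A}_k$ gives $\mm{A}_k(\ta{})\,\ta{} = 2\sum_{i\geq0}(2i+1)^{-k}\ta{2i+2}$. I would expand each $\ta{2i+2}$ by Lemma \ref{tax2} with $m = 2i+2$, read off the coefficient of $t^{2n}/(2n)!$, and interchange the order of summation to get
\[
\sum_{i\geq0}\frac{2}{(2i+1)^k}\sum_{l=2i+2}^{2n}(-1)^l\frac{l!}{2^l}\binom{l-1}{2i+1}\stirs{2n}{l}.
\]
Reindexing $l = j+2$ and writing $(j+2)!/2 = j!\,(j+1)(j+2)/2$ turns this into the $\stirs{2n}{j+2}$-part, carrying the same prefactor $\frac{(-1)^j}{2^j}\frac{j!}{(2i+1)^k}\binom{j+1}{2i+1}$ together with the factor $\frac{(j+1)(j+2)}{2}$. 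Adding the two parts and merging them under a single $\sum_{j=0}^{2n}\sum_{i=0}^{\lfloor j/2\rfloor}$ would give the stated formula.

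The conceptual content lies entirely in the half-angle splitting; the main obstacle is the bookkeeping needed to merge the two sums. The two pieces arrive with different shifts ($l = j+1$ versus $l = j+2$), different powers of $2$ (the extra factor $2$ from $\mm{A}_k$ changes $2^{-l}$ into $2^{-(l-1)}$), and different exponents of $2i+1$, and these must be reconciled through the binomial identity above so that both acquire the common prefactor. I would take care over off-by-one issues at the endpoints $j \approx 2n$, where $\stirs{2n}{j+2}$ and $\stirs{2n+1}{j+1}$ vanish, to confirm that extending both sums to the common range $0\le i\le\lfloor j/2\rfloor$, $0\le j\le 2n$ changes nothing, and I would sanity-check the outcome against the direct value $\be{2}{k} = \tfrac12 + \tfrac12\,3^{-k}$.
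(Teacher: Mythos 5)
Your proposal is correct and takes essentially the same approach as the paper: the paper likewise isolates $\be{n}{k}-\co{n}{k}$ via the identity $1/\tanh t-1/\sinh t=\ta{}$ (phrased there through $\cosh t-1=2\sinh^{2}(t/2)$), expands $2\sum_{i\ge 0}(2i+1)^{-k}\ta{2i+2}$ with Lemma \ref{tax2}, inserts Proposition \ref{explicose} for the $\co{2n}{k}$ part, and merges the two reindexed sums using $\stirs{2n}{2n+1}=\stirs{2n}{2n+2}=0$. Your bookkeeping, including the conversion $\binom{j+1}{2i+1}=\frac{j+1}{2i+1}\binom{j}{2i}$ and the check $\be{2}{k}=\frac{1}{2}+\frac{1}{2}\,3^{-k}$, is consistent with the stated formula.
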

	
	\begin{proof}
		We consider the generating function of $\be{n}{k}-\co{n}{k}$ instead of $\be{n}{k}$. By using Lemma \ref{tax2} and $\cosh{t}-1=2\sinh^2{\frac{t}{2}}$, we have
		\begin{align*}
			\sum_{n=0}^{\infty}\left(\be{n}{k}-\co{n}{k}\right)\frac{t^n}{n!}
			&=2\sum_{i=0}^{\infty}\frac{\ta{2i+2}}{(2i+1)^k}\\
			&=2\sum_{i=0}^{\infty}\frac{1}{(2i+1)^k}\sum_{n=2i+2}^{\infty}\sum_{j=2i+2}^{n}\frac{(-1)^jj!}{2^j}\binom{j-1}{2i+1}\stirs{n}{j}\frac{t^n}{n!}\\
			&=\sum_{n=2}^{\infty}\sum_{i=0}^{\lfloor(n-2)/2\rfloor}\sum_{j=2i+2}^{n}\frac{(-1)^jj!}{(2i+1)^k2^{j-1}}\binom{j-1}{2i+1}\stirs{n}{j}\frac{t^n}{n!}.
		\end{align*}
		Comparing coefficients on the both sides and using Lemma \ref{explicose}, we get
		\begin{align*}
			\be{2n}{k}&=\co{2n}{k}+\sum_{i=0}^{n-1}\sum_{j=2i+2}^{2n}\frac{(-1)^jj!}{(2i+1)^k2^{j-1}}\binom{j-1}{2i+1}\stirs{2n}{j}\\
			&=\sum_{j=0}^{2n}\sum_{i=0}^{\lfloor j/2\rfloor}\frac{(-1)^{j}j!}{(2i+1)^{k+1}2^j}\binom{j}{2i}\stirs{2n+1}{j+1}+\sum_{j=0}^{2n-2}\sum_{i=0}^{\lfloor j/2\rfloor}\frac{(-1)^j(j+2)!}{(2i+1)^{k+1}2^{j+1}}\binom{j}{2i}\stirs{2n}{j+2}.
		\end{align*}
	Since $\stirs{2n}{2n+1}=\stirs{2n}{2n+2}=0$, we have
	\[
    	\be{2n}{k}=\sum_{j=0}^{2n}\sum_{i=0}^{\lfloor j/2\rfloor}\frac{(-1)^{j}j!}{(2i+1)^{k+1}2^j}\binom{j}{2i}\stirs{2n+1}{j+1}+\sum_{j=0}^{2n}\sum_{i=0}^{\lfloor j/2\rfloor}\frac{(-1)^j(j+2)!}{(2i+1)^{k+1}2^{j+1}}\binom{j}{2i}\stirs{2n}{j+2}.
	\]
	Hence we obtain Proposition \ref{explicota}.
	\end{proof}

	\begin{proposition}\label{stircota}
	For $n, k\in\Z_{\geq1}$, we have 
	\begin{align*}
		\be{2n}{-k}&=\sum_{j=0}^{\min{\{2n,k-1\}}}\frac{j!(j+1)!}{2^{j+1}}\stirs{2n}{j}\stirs{k}{j+1}\\&~~~~+\sum_{j=0}^{\min{\{2n-1,k-1\}}}\frac{\{(j+1)!\}^2}{2^{j+1}}\stirs{2n}{j+1}\stirs{k}{j+1}\\
		&~~~~+\sum_{j=0}^{\min{\{2n-1,k-1\}}}\frac{(j+1)!(j+2)!}{2^{j+1}}\stirs{2n}{j+2}\stirs{k}{j+1}\\&~~~~+\sum_{j=0}^{\min{\{2n,k-1\}}}\frac{j!(j+1)!}{2^{j+1}}\stirs{2n+1}{j+1}\stirs{k}{j+1}.
	\end{align*}
\end{proposition}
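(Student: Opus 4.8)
The plan is to reduce the statement to the Sasaki formula for polycosecant numbers (Proposition \ref{sasaki}) together with a single combinatorial identity. First I would invoke the convolution relation \eqref{cotacose}, i.e. $\be{2n}{-k}=\sum_{i=0}^{n}\binom{2n}{2i}\co{2i}{-k}$, and substitute \eqref{eqsasaki} in the reindexed form (set the summation variable equal to $j+1$)
\[
\co{2i}{-k}=\sum_{j\ge0}\frac{(j+1)!\,j!}{2^{j}}\stirs{k}{j+1}\stirs{2i+1}{j+1}.
\]
Interchanging the sums over $i$ and $j$ then gives
\[
\be{2n}{-k}=\sum_{j\ge0}\frac{(j+1)!\,j!}{2^{j}}\stirs{k}{j+1}\sum_{i=0}^{n}\binom{2n}{2i}\stirs{2i+1}{j+1},
\]
so the whole problem collapses onto the evaluation of the inner sum $\sum_{i=0}^{n}\binom{2n}{2i}\stirs{2i+1}{m}$ with $m=j+1$.

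The key step is the identity
\[
2\sum_{i=0}^{n}\binom{2n}{2i}\stirs{2i+1}{m}=\stirs{2n}{m-1}+m\stirs{2n}{m}+m(m+1)\stirs{2n}{m+1}+\stirs{2n+1}{m},
\]
which I would prove by generating functions. Writing $\Phi_m(t)=(e^t-1)^m/m!=\sum_N\stirs{N}{m}t^N/N!$ via Lemma \ref{expstir}, one has $\sum_n\stirs{2n}{r}\,t^{2n}/(2n)!=(\Phi_r(t)+\Phi_r(-t))/2$, and differentiating the odd part of $\Phi_m$ shows $\sum_n\stirs{2n+1}{m}\,t^{2n}/(2n)!=(\Phi_m'(t)+\Phi_m'(-t))/2$ with $\Phi_m'(t)=(e^t-1)^{m-1}e^t/(m-1)!$. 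Since $\cosh t=\sum_i t^{2i}/(2i)!$, multiplying the latter by $\cosh t$ is exactly the binomial convolution producing $\sum_i\binom{2n}{2i}\stirs{2i+1}{m}$ as the coefficient of $t^{2n}/(2n)!$. Comparing coefficients, the identity becomes the algebraic relation (with $\Phi_r^+:=\Phi_r(t)+\Phi_r(-t)$)
\[
\cosh t\,\bigl(\Phi_m'(t)+\Phi_m'(-t)\bigr)=\frac{\Phi_{m-1}^+ + m\,\Phi_m^+ + m(m+1)\,\Phi_{m+1}^+}{2}+\frac{\Phi_m'(t)+\Phi_m'(-t)}{2}.
\]
Putting $a=e^t,\ b=e^{-t}$, both sides become polynomials in $a,b$ over $(m-1)!$: the three even-part coefficients collapse because $m/m!=m(m+1)/(m+1)!=1/(m-1)!$, and using $ab=1$ the cross terms $a^2+ab$ and $ab+b^2$ simplify to $a^2+1$ and $b^2+1$, after which both sides equal $\bigl((a-1)^{m-1}(a^2+1)+(b-1)^{m-1}(b^2+1)\bigr)/2(m-1)!$.

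Finally I would substitute the identity back with $m=j+1$ and simplify factorials: $(j+1)!\,j!\,(j+1)=\{(j+1)!\}^2$ and $(j+1)!\,j!\,(j+1)(j+2)=(j+1)!\,(j+2)!$, which turns the four summands into precisely the four sums of Proposition \ref{stircota}. The ranges follow from the vanishing of Stirling numbers: $\stirs{k}{j+1}=0$ for $j>k-1$ forces the bound $k-1$ everywhere, while $\stirs{2n}{j+1}=0$ for $j>2n-1$ gives the bound $2n-1$ in the second and third sums (in the third, the top term $\stirs{2n}{j+2}$ at $j=2n-1$ is a harmless zero). The main obstacle is the key identity: a naive finite-difference attack handles the term $l(1+l)^{2n}$ cleanly but stalls on $l(1-l)^{2n}$, whose $m$-th difference does not truncate when evaluated away from $0$; the point is that one must keep the $t$ and $-t$ contributions together through the factor $\cosh t$, and it is the relation $e^t e^{-t}=1$ that makes the cross terms collapse and delivers exactly the four Stirling numbers with their correct coefficients.
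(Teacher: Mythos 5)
Your proof is correct, and it takes a genuinely different route from the paper's. The paper proves Proposition \ref{stircota} directly from the two-variable generating function of Lemma \ref{genfuncose}: it multiplies by $\cosh x$, splits the result into the three pieces \eqref{hitotume}--\eqref{mittsume}, expands each as a geometric series in $\frac{1}{2}(e^{\pm x}-1)(e^y-1)$ via Lemma \ref{expstir}, and compares coefficients of $\frac{x^{2n}}{(2n)!}\frac{y^k}{k!}$; it never invokes \eqref{cotacose} or Proposition \ref{sasaki}. You instead reduce everything to those two already-stated results plus one new ingredient, the Stirling identity
\[
2\sum_{i=0}^{n}\binom{2n}{2i}\stirs{2i+1}{m}=\stirs{2n}{m-1}+m\stirs{2n}{m}+m(m+1)\stirs{2n}{m+1}+\stirs{2n+1}{m},
\]
which you establish by a one-variable generating-function computation. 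I checked both the identity (e.g.\ $n=m=2$ gives $66=1+14+36+15$) and your algebra: with $a=e^t$, $b=e^{-t}$, $ab=1$, both sides collapse to $\bigl((a-1)^{m-1}(a^2+1)+(b-1)^{m-1}(b^2+1)\bigr)/2(m-1)!$, and the factorial bookkeeping $j!\,(j+1)!\,(j+1)=\{(j+1)!\}^2$, $j!\,(j+1)!\,(j+1)(j+2)=(j+1)!\,(j+2)!$ together with the vanishing of Stirling numbers reproduces the four sums with exactly the stated ranges (the bound $2n-1$ in the third sum being, as you note, a harmless zero term). Your route is more modular: the only genuinely new content is a self-contained combinatorial identity of independent interest, the interchange of finite sums is trivial, and the argument explains structurally why four terms appear (the factor $a^2+1$ after the cross terms collapse). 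The paper's route, by contrast, is self-contained relative to Lemma \ref{genfuncose} alone --- it does not presuppose Sasaki's formula, whose own proof is of the same two-variable type --- and its expansion technique is the one that carries over to the symmetrized numbers of Section 4 (cf.\ Proposition \ref{expsym}), which is presumably why the author set the proof up that way.
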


\begin{proof}
	By using Lemma \ref{genfuncose}, we have
	\begin{align}
		\sum_{n=0}^{\infty}\sum_{k=1}^{\infty}\be{n}{-k}\frac{x^n}{n!}\frac{y^k}{k!}&=\cosh{x}\left\{\frac{e^x(e^y-1)}{1+e^x+e^y-e^{x+y}}+\frac{e^{-x}(e^y-1)}{1+e^{-x}+e^y-e^{-x+y}}\right\}\nonumber\\
		&=\frac{1}{4}\left\{\frac{e^y-1}{1-\frac{1}{2}(e^x-1)(e^y-1)}+\frac{e^y-1}{1-\frac{1}{2}(e^{-x}-1)(e^y-1)}\right\}\label{hitotume}\\
		&~~+\frac{1}{4}\left\{e^x\frac{(e^x-1)(e^y-1)}{1-\frac{1}{2}(e^x-1)(e^y-1)}+e^{-x}\frac{(e^{-x}-1)(e^y-1)}{1-\frac{1}{2}(e^{-x}-1)(e^y-1)}\right\}\label{futatume}\\
		&~~+\frac{1}{4}\left\{e^x\frac{e^y-1}{1-\frac{1}{2}(e^x-1)(e^y-1)}+e^{-x}\frac{e^y-1}{1-\frac{1}{2}(e^{-x}-1)(e^y-1)}\right\}\label{mittsume}.
	\end{align}
    For \eqref{hitotume}, by using Lemma \ref{expstir}, a calculation similar to that in \cite[Theorem 14.14]{AIK} shows that 
    \begin{align*}
    	\frac{1}{4}&\left\{\frac{e^y-1}{1-\frac{1}{2}(e^x-1)(e^y-1)}+\frac{e^y-1}{1-\frac{1}{2}(e^{-x}-1)(e^y-1)}\right\}\\
    	&=\frac{1}{4}\sum_{j=0}^{\infty}\frac{1}{2^j}(e^x-1)^j(e^y-1)^{j+1}+\frac{1}{4}\sum_{j=0}^{\infty}\frac{1}{2^j}(e^{-x}-1)^j(e^y-1)^{j+1}\\
    	&=\sum_{n,k=0}^{\infty}\left(\sum_{j=0}^{\min{\{2n,k\}}}\frac{j!(j+1)!}{2^{j+1}}\stirs{2n}{j}\stirs{k+1}{j+1}\right)\frac{x^{2n}}{(2n)!}\frac{y^{k+1}}{(k+1)!}.
    \end{align*}
	By the same calculations, we have
	\begin{align*}
		\sum_{n=0}^{\infty}\sum_{k=1}^{\infty}\be{n}{-k}\frac{x^n}{n!}\frac{y^k}{k!}&=\sum_{n,k=0}^{\infty}\left(\sum_{j=0}^{\min{\{2n,k\}}}\frac{j!(j+1)!}{2^{j+1}}\stirs{2n}{j}\stirs{k+1}{j+1}\right)\frac{x^{2n}}{(2n)!}\frac{y^{k+1}}{(k+1)!}\\
		&+\sum_{n,k=0}^{\infty}\left(\sum_{j=0}^{\min{\{2n+1,k\}}}\frac{\{(j+1)!\}^2}{2^{j+1}}\stirs{2n+2}{j+1}\stirs{k+1}{j+1}\right)\frac{x^{2n+2}}{(2n+2)!}\frac{y^{k+1}}{(k+1)!}\\
		&+\sum_{n,k=0}^{\infty}\left(\sum_{j=0}^{\min{\{2n+1,k\}}}\frac{(j+1)!(j+2)!}{2^{j+1}}\stirs{2n+2}{j+2}\stirs{k+1}{j+1}\right)\frac{x^{2n+2}}{(2n+2)!}\frac{y^{k+1}}{(k+1)!}\\
		&+\sum_{n,k=0}^{\infty}\left(\sum_{j=0}^{\min{\{2n,k\}}}\frac{j!(j+1)!}{2^{j+1}}\stirs{2n+1}{j+1}\stirs{k+1}{j+1}\right)\frac{x^{2n}}{(2n)!}\frac{y^{k+1}}{(k+1)!}.
	\end{align*}
	Comparing coefficients on the both sides, we obtain Proposition \ref{stircota}.
\end{proof}
    
    \begin{proposition}
    	For $k\in\Z$, and $m, n\in\Z_{\geq1}$ with $m\geq 2n$, we have
    	\begin{align}
    		\sum_{j=0}^{m}&(-1)^j\stirf{m+1}{j+1}\co{2n}{-k-j}=0\label{cose},\\
    		\sum_{j=0}^{m}&(-1)^j\stirf{m+1}{j+1}\be{2n}{-k-j}=0\label{cota}.
    	\end{align}
    \end{proposition}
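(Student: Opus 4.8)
The plan is to read both \eqref{cose} and \eqref{cota} as the single assertion that a finite-difference operator built from the Stirling numbers of the first kind annihilates the sequences $k\mapsto\co{2n}{-k}$ and $k\mapsto\be{2n}{-k}$. The starting point is the operator identity
\[
\sum_{j=0}^{m}(-1)^{j}\stirf{m+1}{j+1}x^{j}=(-1)^{m}\prod_{i=1}^{m}(x-i),
\]
which I would obtain from the rising-factorial generating function $\sum_{l=0}^{m+1}\stirf{m+1}{l}x^{l}=x(x+1)\cdots(x+m)$ by the substitution $x\mapsto-x$ followed by division by $x$ (the $l=0$ term drops since $\stirf{m+1}{0}=0$). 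Letting $E$ denote the shift $k\mapsto k+1$, the left-hand side of \eqref{cose} is precisely $(-1)^{m}\prod_{i=1}^{m}(E-i)$ applied to $\co{2n}{-k}$, so it suffices to show this product operator kills the polycosecant sequence.

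To do that I would diagonalise $E$. By Proposition \ref{sasaki}, for $k\geq0$,
\[
\co{2n}{-k}=\sum_{i=1}^{2n+1}\frac{i!(i-1)!}{2^{i-1}}\stirs{2n+1}{i}\stirs{k}{i},
\]
and Lemma \ref{bekistir} rewrites each $\stirs{k}{i}$ as a linear combination of the exponentials $l^{k}$ with $1\leq l\leq i$. Collecting terms yields a representation $\co{2n}{-k}=\sum_{l=1}^{2n+1}c_{l}\,l^{k}$ whose eigenvalues all satisfy $l\leq2n+1$. Since $E(l^{k})=l\cdot l^{k}$, the operator sends $l^{k}\mapsto(-1)^{m}l^{k}\prod_{i=1}^{m}(l-i)$, which vanishes as soon as the factor $(l-l)$ occurs, that is, whenever $l\in\{1,\dots,m\}$. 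Hence every term is annihilated once $m$ is at least the largest eigenvalue appearing, namely $2n+1$, and this is precisely where the hypothesis on $m$ enters; this gives \eqref{cose}.

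For the polycotangent identity I would avoid Proposition \ref{stircota} altogether and instead use \eqref{cotacose}, which expresses $\be{2n}{-k}=\sum_{i=0}^{n}\binom{2n}{2i}\co{2i}{-k}$ as a finite combination of polycosecant sequences. Each summand $\co{2i}{-k}$ has eigenvalues at most $2i+1\leq2n+1$, so the same product operator annihilates every term under the same condition on $m$, and \eqref{cota} follows as an immediate corollary of \eqref{cose}.

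The step I expect to be decisive is the control of the top eigenvalue: one must verify that the coefficient of $(2n+1)^{k}$ in $\co{2n}{-k}$ is genuinely nonzero (it comes only from the $i=2n+1$ term of Sasaki's formula, with value $(2n)!/2^{2n}$), so that the product operator really has to reach the factor $(E-(2n+1))$ before the sequence is annihilated; this is what fixes how large $m$ must be taken, in exact analogy with the poly-Bernoulli recurrence of Ohno--Sasaki, where the bound is one more than the subscript. A secondary subtlety is the range of $k$: the eigenvalue expansion is immediate for $k\geq0$, where every argument $-k-j$ is non-positive, whereas for $k<0$ some of the $-k-j$ become non-negative and Sasaki's formula no longer applies directly; there one must either feed in the explicit formula of Proposition \ref{explicose} or re-derive the relation from the two-variable generating function of Lemma \ref{genfuncose}, and checking that the same linear relation persists across this boundary is the fiddly part of a fully uniform argument.
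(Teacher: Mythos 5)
Your strategy is the same as the paper's: the paper also converts the sum into the annihilator polynomial $(-1)^m\prod_{i=1}^{m}(x-i)$ (with $x$ acting as the shift $k\mapsto k+1$) via the rising-factorial identity for $\stirf{m+1}{\cdot}$, applies it to an exponential-sum representation of $k\mapsto\co{2n}{-k}$, and deduces \eqref{cota} from \eqref{cose} through \eqref{cotacose} exactly as you do. The only difference is the source of the exponential representation: the paper substitutes Proposition \ref{explicose}, which gives $\co{2n}{-k}=\sum_{l=0}^{n}c_l(2l+1)^k$ with coefficients $c_l$ independent of $k$ and valid for \emph{every} $k\in\Z$, so only the odd eigenvalues $1,3,\dots,2n+1$ occur and your ``secondary subtlety'' about $k\le0$ never arises; your route through Proposition \ref{sasaki} and Lemma \ref{bekistir} needs exactly the patch you describe, and the patch you propose (falling back on Proposition \ref{explicose}) is precisely the paper's argument.

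However, your ``decisive step'' about the top eigenvalue is the point where you and the paper genuinely differ, and you are the one who is correct. Since $c_n=(2n)!/2^{2n}\neq0$ and the roots of $(-1)^m(x-1)\cdots(x-m)$ are exactly $1,\dots,m$, the left-hand side of \eqref{cose} vanishes for all $k$ if and only if $m\geq2n+1$; the stated hypothesis $m\geq2n$ is not enough. Concretely, $\co{2}{-k}=(3^k-1)/2$, so for $n=1$, $m=2$, $k=1$,
\[
\sum_{j=0}^{2}(-1)^j\stirf{3}{j+1}\co{2}{-1-j}=2\cdot1-3\cdot4+1\cdot13=3\neq0,
\]
and \eqref{cota} fails in the same way since $\be{2}{-k}=(3^k+1)/2$. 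The paper's proof conceals this in an off-by-one error: it evaluates $\sum_{j=0}^{m}(-1)^{j+1}\stirf{m+1}{j+1}(2l+1)^{j+1}$ as the product $(-2l-1)(-2l)\cdots(-2l+m)$, but this sum equals the degree-$(m+1)$ product $(-x)(-x+1)\cdots(-x+m)$ at $x=2l+1$, whose last factor is $m-2l-1$, not $m-2l$; the spurious extra factor $m-2l$ is exactly what creates a false zero when $l=n$ and $m=2n$ (tellingly, the paper's own numerical example uses $m=5=2n+1$, not $m=4$). So your argument proves both identities under $m\geq2n+1$, and your nonvanishing computation shows that this bound is sharp. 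The one thing to fix in your write-up is the assertion that ``this is precisely where the hypothesis on $m$ enters'': your proof uses, and the truth of the statement requires, $m\geq2n+1$ rather than the stated $m\geq2n$, so you should state explicitly that the proposition's hypothesis must be corrected rather than presenting your bound as though it matched the stated one.
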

    
    \begin{proof}
    	By Proposition \ref{explicose}, we have
    	\begin{align*}
    		\sum_{j=0}^{m}(-1)^j\stirf{m+1}{j+1}\co{2n}{-k-j}&=\sum_{l=0}^{n}(2l+1)^{k-2}\sum_{i=2m}^{2n}\frac{(-1)^{i+1}(i+1)!}{2^i}\binom{i}{2l}\stirs{2n+1}{i+1}\\
    		&\fift\times\sum_{j=0}^{m}(-1)^{j+1}\stirf{m+1}{j+1}(2l+1)^{j+1}.
    	\end{align*}
    	For the last sum, we have
    	\begin{align*}
    	\sum_{j=0}^{m}(-1)^{j+1}\stirf{m+1}{j+1}(2l+1)^{j+1}&=(-2l-1)(-2l)\cdots(-2l+m)\\
    	&=0.
    	\end{align*}
    	Hence we obtain \eqref{cose}. Also, by using \eqref{cotacose}, we have
    	\begin{align*}
    		\sum_{j=0}^{m}(-1)^j\stirf{m+1}{j+1}\be{2n}{-k-j}&=\sum_{l=0}^{n}\binom{2n}{2l}\sum_{j=0}^{m}(-1)^j\stirf{m+1}{j+1}\co{2l}{-k-j}=0.
    	\end{align*}
    	This completes the proof.
    \end{proof}
    
    \begin{remark}
    	By the same way, we have
    	\[
    	\sum_{j=0}^{m}(-1)^j\stirf{m+1}{j+1}B_{n}^{(-k-j)}=0~~(~k\in\Z, 0\leq n\leq m~).
    	\]
    \end{remark}

    \begin{example}
    	For $k=-2, n=2$ and $m=5$, we have $\co{4}{2}=\frac{176}{225}, \co{4}{1}=\frac{7}{15}, \co{4}{0}=0, \co{4}{-1}=1, \co{4}{-2}=16$ and $\co{4}{-3}=121$. Then we can see
    	\begin{align*}
    		\sum_{j=0}^{5}(-1)^j\stirf{6}{j+1}\co{4}{2-j}&=\frac{1408}{15}-\frac{1918}{15}+0-85+240-121\\
    		&=0.
    	\end{align*}
        Also, we have $\be{4}{2}=-\frac{199}{225}, \be{4}{1}=-\frac{8}{15}, \be{4}{0}=1, \be{4}{-1}=8, \be{4}{-2}=41$ and $\be{4}{-3}=200$. Then we can see
    \end{example}
        \begin{align*}
        	\sum_{j=0}^{5}(-1)^j\stirf{6}{j+1}\be{4}{2-j}&=-\frac{1592}{15}+\frac{2192}{15}+225-680+615-200\\
        	&=0.
        \end{align*}
    \subsection{Congruence formulas}
	
	In this subsection, we prove some congruence formulas.
	
		\begin{lemma}\label{congruencestir}
		Let $p$ be a prime number. For $n, m$ and $N\in\Z_{\geq1}$ with $n\equiv m\bmod\varphi(p^N)$ and $m,n\geq N$, and $j\in\Z_{\geq0}$, we have
		\[
		j!\stirs{n}{j}\equiv j!\stirs{m}{j} \bmod{p^N}.
		\]
	\end{lemma}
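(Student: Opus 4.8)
The plan is to reduce the congruence for the scaled Stirling numbers to an elementary congruence between powers of integers, using the closed form in Lemma~\ref{bekistir}. Writing out that identity gives
\[
j!\stirs{n}{j}=(-1)^j\sum_{l=0}^{j}(-1)^l\binom{j}{l}l^n=\sum_{l=0}^{j}(-1)^{j-l}\binom{j}{l}l^n,
\]
and likewise for $m$. Subtracting the two expressions, it suffices to show that $l^n\equiv l^m\bmod p^N$ holds for each integer $0\leq l\leq j$; then the entire alternating sum vanishes modulo $p^N$ term by term, which yields the claim.

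First I would treat the terms with $p\nmid l$. Here $l$ is a unit modulo $p^N$, so Euler's theorem gives $l^{\varphi(p^N)}\equiv1\bmod p^N$. Since $n\equiv m\bmod\varphi(p^N)$, writing $n=m+\varphi(p^N)s$ for some $s\in\Z_{\geq0}$ immediately yields $l^n\equiv l^m\bmod p^N$.

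Then I would treat the remaining terms, namely $l=0$ and those with $p\mid l$. For $p\mid l$ with $l\geq1$ we have $\ordp(l^n)=n\,\ordp(l)\geq n\geq N$, and similarly $\ordp(l^m)\geq m\geq N$, so both $l^n$ and $l^m$ are divisible by $p^N$; for $l=0$ both powers are literally $0$ because $n,m\geq N\geq1$. In either case $l^n\equiv l^m\equiv0\bmod p^N$.

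Combining the two cases shows $l^n\equiv l^m\bmod p^N$ for every $l$ in range, and summing against $(-1)^{j-l}\binom{j}{l}$ completes the proof. The only place where care is needed is exactly this split: Euler's theorem governs the prime-to-$p$ residues but says nothing about $l$ divisible by $p$, and there it is the hypothesis $m,n\geq N$, rather than the congruence on the exponents, that forces both terms to be divisible by $p^N$. I therefore expect the main (though mild) obstacle to be recognizing that the two hypotheses on $n$ and $m$ play complementary roles for the two types of $l$.
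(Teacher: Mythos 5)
Your proof is correct and takes essentially the same route as the paper's: expand $j!\stirs{n}{j}$ via Lemma \ref{bekistir} and apply Euler's theorem termwise to the powers $l^n$. You are in fact more careful than the paper, whose one-line proof silently passes over the terms with $p\mid l$, where it is precisely the hypothesis $m,n\geq N$ (not Euler's theorem) that forces $l^n\equiv l^m\equiv 0\bmod{p^N}$.
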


    \begin{proof}
    	By using Lemma \ref{bekistir} and the Euler's theorem, we have
    	\begin{align*}
    		j!\stirs{n}{j}&=\sum_{l=0}^{j}(-1)^{l+j}\binom{l}{j}l^n\\
    		&=\sum_{l=0}^{j}(-1)^{l+j}\binom{l}{j}l^m\\
    		&=j!\stirs{m}{j}.
    	\end{align*}
    Hence we obtain Lemma \ref{congruencestir}.
    \end{proof}

    The following theorem is the general form of Kummer type congruences for polycosecant numbers and polycotangent numbers, which is the generalization of Theorem \ref{kupa}.

    \begin{theorem}\label{kupoly}
    	Let $p$ be an odd prime number. For $n,m,k$ and $N\in\Z_{\geq 1}$ with $2m\equiv 2n \bmod \varphi(p^N)$ and $2n, 2m\geq N$, we have
    	\begin{align}
    		\co{2m}{-k}&\equiv\co{2n}{-k}\bmod{p^N}\label{kucose},\\
    		\be{2m}{-k}&\equiv\be{2n}{-k}\bmod{p^N}\label{kucota}.
    	\end{align}
    \end{theorem}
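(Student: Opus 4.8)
The plan is to prove both congruences directly from the explicit formulas for $\co{2n}{-k}$ and $\be{2n}{-k}$ established earlier, reducing every occurrence of the index $n$ to a quantity of the shape $j!\stirs{2n+a}{j}$ to which Lemma \ref{congruencestir} applies. The structural point is that in both formulas all of the dependence on the first index is carried by Stirling numbers $\stirs{2n}{\ast}$ and $\stirs{2n+1}{\ast}$, while every remaining factor involves only $k$, $j$, factorials and powers of $2$. Since $p$ is odd, $2$ is a unit in $\zetp$, so those remaining factors are $p$-integral, and multiplying a congruence modulo $p^N$ by a $p$-integral element preserves it. This is exactly the mechanism Pallewatta used for Theorem \ref{kupa}, and I would follow the same route in this more general setting.

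For \eqref{kucose} I would start from Proposition \ref{sasaki},
\[
\co{2n}{-k}=\sum_{i=1}^{\min\{2n+1,k\}}\frac{i!(i-1)!}{2^{i-1}}\stirs{k}{i}\stirs{2n+1}{i}.
\]
First I would extend the upper limit of summation to $i=k$ for both $\co{2n}{-k}$ and $\co{2m}{-k}$; this is harmless, since $\stirs{2n+1}{i}=0$ whenever $i>2n+1$, and it places the two sums on a common index range so that they may be subtracted termwise. Writing each summand as $\frac{(i-1)!}{2^{i-1}}\stirs{k}{i}\cdot\bigl(i!\stirs{2n+1}{i}\bigr)$, I note that the coefficient $\frac{(i-1)!}{2^{i-1}}\stirs{k}{i}$ lies in $\zetp$, and that $2m\equiv 2n\bmod\varphi(p^N)$ forces $2m+1\equiv 2n+1\bmod\varphi(p^N)$ with $2n+1,2m+1\geq N$. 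Lemma \ref{congruencestir} then gives $i!\stirs{2m+1}{i}\equiv i!\stirs{2n+1}{i}\bmod p^N$ for every $i$, and summing the term-by-term congruences yields \eqref{kucose}.

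For \eqref{kucota} I would run the identical argument on the four sums of Proposition \ref{stircota}. After extending each sum to $j=0,\dots,k-1$ (again harmless, since the relevant Stirling numbers vanish once their lower index exceeds $2n$ or $2n+1$, or exceeds $k$), the crucial step is to regroup each summand so that the factorial attached to a Stirling number matches its lower index: the second sum's $\{(j+1)!\}^2\stirs{2n}{j+1}$ becomes $\frac{(j+1)!}{2^{j+1}}\stirs{k}{j+1}\cdot\bigl((j+1)!\stirs{2n}{j+1}\bigr)$, the third's $(j+1)!(j+2)!\stirs{2n}{j+2}$ becomes $\frac{(j+1)!}{2^{j+1}}\stirs{k}{j+1}\cdot\bigl((j+2)!\stirs{2n}{j+2}\bigr)$, and the fourth's $j!(j+1)!\stirs{2n+1}{j+1}$ becomes $\frac{j!}{2^{j+1}}\stirs{k}{j+1}\cdot\bigl((j+1)!\stirs{2n+1}{j+1}\bigr)$. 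In each case the leading coefficient is $p$-integral and the bracketed factor is precisely the quantity controlled by Lemma \ref{congruencestir}, using $2n\equiv 2m$ for the $\stirs{2n}{\ast}$ terms and $2n+1\equiv 2m+1$ for the $\stirs{2n+1}{\ast}$ term. Summing once more gives \eqref{kucota}.

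I expect the only real care to be bookkeeping rather than conceptual: making the two summation ranges literally coincide so that one may subtract termwise, and aligning each factorial with the lower index of its Stirling number so that Lemma \ref{congruencestir} applies verbatim. I would deliberately avoid deriving \eqref{kucota} from the relation \eqref{cotacose}, since the binomial coefficients $\binom{2n}{2i}$ appearing there depend on $n$ and do not transform congruentially term-by-term; it is the explicit formula of Proposition \ref{stircota} that makes the argument go through cleanly.
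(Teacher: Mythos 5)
Your proposal is correct and follows essentially the same route as the paper: the paper also deduces \eqref{kucose} by applying Lemma \ref{congruencestir} to the Stirling numbers $\stirs{2n+1}{i}$ in Proposition \ref{sasaki} (handling the mismatched ranges via the vanishing of $\stirs{2m+1}{i}$ for $i>2m+1$), and obtains \eqref{kucota} ``by the same argument,'' i.e.\ from the four-sum explicit formula of Proposition \ref{stircota}, exactly as you do. Your remarks on aligning each factorial with the lower Stirling index and on why the route through \eqref{cotacose} would fail are details the paper leaves implicit, but they do not change the argument.
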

	
	\begin{proof}
		From Lemma \ref{congruencestir} and Proposition \ref{sasaki}, we have
		\begin{align*}
			\co{2n}{-k}&=\sum_{i=1}^{\min{\{2n+1,k\}}}\frac{i!(i-1)!}{2^{i-1}}\stirs{k}{i}\stirs{2n+1}{i}\\
			&\equiv \sum_{i=1}^{\min{\{2n+1,k\}}}\frac{i!(i-1)!}{2^{i-1}}\stirs{k}{i}\stirs{2m+1}{i} \bmod{p^N}.
		\end{align*}
		Since $\stirs{2m+1}{i}=0$ for $i=2m+2,\ldots, 2n+1$, the right hand side is equal to $\co{2m}{-k}$.
		Hence we obtain the formula \eqref{kucose}. By the same argument, we obtain the formula \eqref{kucota}.
	\end{proof}

    \begin{example}
    	For $p=3, m=2, n=5, k=3$ and $N=2$, we have $\co{4}{-3}=121$ and $\co{10}{-3}=88573$. Then we can see
    	\[
    	\co{4}{-3}\equiv \co{10}{-3}\equiv 4\bmod9.
    	\]
    	Also, we have $\be{4}{-3}=200$ and $\be{10}{-3}=786944$. Then we can see
    	\[
    	\be{4}{-3}\equiv\be{10}{-3}\equiv2\bmod9.
    	\]
    \end{example}
	
	\begin{remark}
		Since 
		\begin{align}
			\co{2n}{1}&=(2-2^{2n})B_{2n}, \label{coseber}\\
			\be{2n}{1}&=2^{2n}B_{2n},
		\end{align}
		we have
		\begin{align*}
			(1-p^{2n-1})\frac{\co{2n}{1}}{2n}&\equiv(1-p^{2m-1})\frac{\co{2m}{1}}{2m} \bmod{p^N},\\
			(1-p^{2n-1})\frac{\be{2n}{1}}{2n}&\equiv(1-p^{2m-1})\frac{\be{2m}{1}}{2m} \bmod{p^N}
		\end{align*}
		for $n, m, N\in\Z_{\geq0}$ with $2n\equiv 2m\bmod {(p-1)p^{N-1}}$ and $(p-1)\nmid 2n$.
	\end{remark}
	
	Moreover, we obtain more accurate evaluations of 2-order of $\co{2n}{-2k}$ and $\be{2n}{-2k-1}$. Note that some of the following results were suggested by private communication from Kaneko.
	
	\begin{proposition}\label{twoorder}
		For $n$ and $k\in\Z_{\geq1}$, we have
		\[
		\co{2n}{-2k}\equiv0\bmod{2^{2n}}.
		\]
	\end{proposition}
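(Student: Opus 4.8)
The plan is to reduce the divisibility to a purely combinatorial estimate on a binomial double sum by iterating the recurrence of Proposition \ref{recurcose}, and then to close the argument by induction on $k$. The point of iterating \emph{twice} is to stay within even upper indices: a single application of Proposition \ref{recurcose} produces $\co{\cdot}{-2k+1}$, whose $2$-adic valuation I have no control over (these are typically odd), so the odd upper index must be passed through and removed.

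Carrying out the two steps, Proposition \ref{recurcose} (applied with upper indices $-2k+1$ and then $-2k+2$) gives
\[
\co{2n}{-2k}=\sum_{s=0}^{n}e_s^{(n)}\,\co{2n-2s}{-2(k-1)},\qquad e_s^{(n)}=\sum_{m=0}^{s}\binom{2n+1}{2m+1}\binom{2n-2m+1}{2(s-m)+1}.
\]
The crux of the whole proof is then the claim that $2^{2s}\mid e_s^{(n)}$. The two binomial factors do not form a clean Vandermonde convolution because of the shifts in the upper arguments, so I would not attack $e_s^{(n)}$ directly but instead pass to the generating function $\sum_{s\ge0}e_s^{(n)}w^{2s+2}$. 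Summing the inner index first by the odd-part identity $\sum_{r\ \mathrm{odd}}\binom{N}{r}a^{r}b^{N-r}=\tfrac12\big((a+b)^{N}-(b-a)^{N}\big)$, used twice with $(a,b)=(w,\,1\pm w)$, collapses the double sum to the closed form
\[
\sum_{s\ge0}e_s^{(n)}w^{2s+2}=\frac14\Big[(1+w)(1+2w)^{2n+1}+(1-w)(1-2w)^{2n+1}-2\Big].
\]
Reading off the coefficient of $w^{2s+2}$ (only even powers survive, and each monomial carries an explicit power of $2$) I expect to obtain
\[
e_s^{(n)}=2^{2s}\left(\binom{2n+1}{2s+1}+2\binom{2n+1}{2s+2}\right),
\]
which exhibits the factor $2^{2s}$, so that $\mm{ord}_2\!\left(e_s^{(n)}\right)\ge 2s$; as a byproduct the extreme term $e_n^{(n)}=2^{2n}$ recovers $\co{2n}{-2}=2^{2n}$.

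With this lemma the theorem follows by induction on $k$. For the base case $k=0$ one has $\co{2n}{0}=\delta_{n,0}$, since $A_0(\tanh(t/2))=\sinh t$; this is $\equiv 0 \bmod 2^{2n}$. For the inductive step each summand $e_s^{(n)}\,\co{2n-2s}{-2(k-1)}$ has $2$-adic valuation at least $2s+2(n-s)=2n$, combining $\mm{ord}_2(e_s^{(n)})\ge 2s$ with the induction hypothesis applied at index $2n-2s$; summing gives $\co{2n}{-2k}\equiv 0 \bmod 2^{2n}$.

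The routine parts are the iteration of Proposition \ref{recurcose} and the final induction. The single real obstacle is the combinatorial lemma: establishing the closed form for $e_s^{(n)}$, equivalently isolating the full factor $2^{2s}$. I expect the generating-function computation above to handle it cleanly, the odd-part identity being precisely what untangles the coupling of the two binomial factors; the main care needed is in tracking the shift terms $(1\pm w)$ and the parity of $p=2s+2$ when extracting coefficients.
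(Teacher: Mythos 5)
Your proof is correct, and its skeleton is the same as the paper's: apply Proposition \ref{recurcose} twice so as to return to an even upper index $-2(k-1)$, then induct on $k$. The genuine difference lies in how the crux is handled. The paper's proof simply asserts, as ``a direct calculation,'' the congruence
\[
\sum_{i=0}^{n-N}\binom{2n+1}{2i+1}\binom{2(n-i)+1}{2N}\equiv 0 \bmod{2^{2n-2N}},
\]
whose left-hand side is exactly your $e_{n-N}^{(n)}$; you instead prove this claim, and in sharpened form: your generating-function computation is valid (the two applications of the odd-part identity and the parity-split coefficient extraction check out), and the resulting closed form $e_s^{(n)}=2^{2s}\left(\binom{2n+1}{2s+1}+2\binom{2n+1}{2s+2}\right)$ is an exact evaluation, strictly stronger than the congruence the paper needs, and it makes the final $2$-adic bookkeeping trivial. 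The other divergence is the base case: the paper starts at $k=1$ with $\co{2n}{-2}=2^{2n}$ (stated to follow ``by induction on $n$,'' again without details), whereas you start at $k=0$ with $\co{2n}{0}=\delta_{n,0}$, which is immediate from $\mm{A}_0(\ta{})=\sinh{t}$, and you then recover $\co{2n}{-2}=2^{2n}$ as a byproduct, since only the term $s=n$ with $e_n^{(n)}=2^{2n}$ survives. In short, same route, but your version supplies complete proofs of precisely the two steps the paper leaves unproved, at the cost of nothing more than a short generating-function manipulation.
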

	
	\begin{proof}
		We prove this result by induction on $k$. For $k$=1, we can obtain that $\co{2n}{-2}=2^{2n}$ by induction on $n$. We assume that $\co{2n}{-2j}\equiv0\pmod{2^{2n}}$ holds for 
		$j\leq k-1$. Then we have
		\begin{align*}
			\co{2n}{-2k}&=\sum_{i=0}^{n}\binom{2n+1}{2i+1}\co{2(n-i)}{-2k+1}\\
			&=\sum_{i=0}^{n}\binom{2n+1}{2i+1}\sum_{j=0}^{n-i}\binom{2(n-i)+1}{2j+1}\co{2(n-i-j)}{-2(k-1)}\\
			&=\sum_{N=0}^{n}\sum_{i=0}^{n-N}\binom{2n+1}{2i+1}\binom{2(n-i)+1}{2N}\co{2N}{-2(k-1)}.
		\end{align*}
		Also, a direct calculation shows 
		\[
		\sum_{i=0}^{n-N}\binom{2n+1}{2i+1}\binom{2(n-i)+1}{2N}\equiv0 \bmod{2^{2n-2N}}.
		\]
		By combining these and the induction hypothesis, we obtain Proposition \ref{twoorder}.
	\end{proof}
	
	\begin{corollary}\label{cortwo}
		For $n\in\Z_{\geq1}$ and $k\in\Z_{\geq0}$, we have
		\[
		\be{2n}{-2k-1} \equiv0 \bmod{2^{2n-1}}.
		\]
	\end{corollary}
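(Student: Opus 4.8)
The plan is to reduce $\be{2n}{-2k-1}$ to a combination of even-superscript polycosecant numbers $\co{2j}{-2k}$, to which Proposition \ref{twoorder} applies, and then to extract the power of $2$ hidden in the combinatorial coefficients. First I would use the relation \eqref{cotacose} to write
\[
\be{2n}{-2k-1}=\sum_{i=0}^{n}\binom{2n}{2i}\co{2i}{-2k-1},
\]
and apply the recurrence of Proposition \ref{recurcose} (with superscript $-2k-1=(-2k)-1$) to each summand, obtaining $\co{2i}{-2k-1}=\sum_{m=0}^{i}\binom{2i+1}{2m+1}\co{2(i-m)}{-2k}$. Substituting and reindexing by $j=i-m$, using $\binom{2i+1}{2(i-j)+1}=\binom{2i+1}{2j}$, gives
\[
\be{2n}{-2k-1}=\sum_{j=0}^{n}A_j\,\co{2j}{-2k},\qquad A_j:=\sum_{i=j}^{n}\binom{2n}{2i}\binom{2i+1}{2j}.
\]

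The key step, which I expect to be the only nontrivial point, is to pin down the $2$-adic size of $A_j$. I would compute the generating function $\sum_{j\geq0}A_j y^{2j}$ by inserting $\sum_{j}\binom{2i+1}{2j}y^{2j}=\tfrac12\{(1+y)^{2i+1}+(1-y)^{2i+1}\}$ and then summing the even-index binomial series $\sum_{i}\binom{2n}{2i}u^{2i}=\tfrac12\{(1+u)^{2n}+(1-u)^{2n}\}$ at $u=1\pm y$. This collapses to $\tfrac14\{(1+y)(2+y)^{2n}+(1-y)(2-y)^{2n}+2y^{2n}\}$, whose coefficient of $y^{2j}$ for $j<n$ is
\[
A_j=2^{2(n-j)-1}\left\{\binom{2n}{2j}+2\binom{2n}{2j-1}\right\},
\]
while $A_n=2n+1$. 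In particular $A_j\equiv0\bmod 2^{2(n-j)-1}$ for $0\leq j<n$, so the required divisibility becomes transparent once this closed form is in hand.

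Finally I would combine the two divisibilities. For $k\geq1$, Proposition \ref{twoorder} gives $\co{2j}{-2k}\equiv0\bmod 2^{2j}$ (trivially for $j=0$, since $\co{0}{-2k}=1$), so the $j$-th term is divisible by $2^{2j}\cdot 2^{2(n-j)-1}=2^{2n-1}$; the $j=n$ term equals $(2n+1)\co{2n}{-2k}$ and is divisible by $2^{2n}$. Hence $\be{2n}{-2k-1}\equiv0\bmod 2^{2n-1}$. The case $k=0$ is not covered by Proposition \ref{twoorder}, but here $\co{2j}{0}=\delta_{j,0}$, so only $j=0$ survives and $\be{2n}{-1}=A_0=2^{2n-1}$, showing in addition that the exponent $2n-1$ is sharp. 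All of this parallels the structure of the proof of Proposition \ref{twoorder}, where the work likewise concentrates in a binomial divisibility; everything apart from the evaluation of $A_j$ is bookkeeping.
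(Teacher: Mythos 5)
Your proof is correct and follows essentially the same route as the paper's: expand via \eqref{cotacose} and Proposition \ref{recurcose}, establish a $2$-adic divisibility of the resulting binomial coefficients, and combine with Proposition \ref{twoorder}. The only difference is one of detail — where the paper merely asserts the coefficient divisibility as ``a direct calculation,'' you actually prove it by deriving the closed form $A_j=2^{2(n-j)-1}\bigl\{\binom{2n}{2j}+2\binom{2n}{2j-1}\bigr\}$ (and you additionally settle the $k=0$ case and the sharpness of the exponent, which the paper leaves implicit).
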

	
	\begin{proof}
		By using the formula \eqref{cotacose}, we have
		\begin{align*}
			\be{2n}{-2k-1}&=\sum_{i=0}^{n}\binom{2n}{2i}\co{2i}{-2k-1}\\
			&=\sum_{i=0}^{n}\sum_{j=0}^{i}\binom{2n}{2(n-i)}\binom{2i+1}{2(i-j)}\co{2(i-j)}{-2k}.
		\end{align*}	
		Also, a direct calculation shows
		\begin{align*}
			\sum_{j=0}^{i}\binom{2n}{2(n-i)}\binom{2i+1}{2(i-j)}&\equiv0 \bmod{2^{2(n-i+j)-1}}.
		\end{align*}
		By combining these and Proposition \ref{twoorder}, we obtain Corollary \ref{cortwo}.
	\end{proof}
	
	\begin{remark}
		When we put $N=1$, we can see that the polycosecant number and the polycotangent number have a period $p-1$ modulo $p$. In \cite[Theorem 4.3]{OS} and \cite[Theorem 6.5 and 6.6]{Sa}, Sakata showed the following proposition.
	\end{remark}
	
	\begin{proposition}\label{periodbc}
		Let $p$ be an odd prime number. For $k\in\Z_{\geq0}$, we have
		\begin{align}
			B_{p-1}^{(-k)} &\equiv \begin{cases}
				1 \bmod{p}& (k=0~\mm{or}~p-1 \nmid k), \\
				2 \bmod{p} & (k\neq0~\mm{and}~p-1 \mid k),
			\end{cases}\label{periodb}\\
			C_{p-2}^{(-k-1)} &\equiv \begin{cases}
				0 \bmod{p}& (p-1 \nmid k), \\
				1 \bmod{p} & (p-1 \mid k).
			\end{cases}\label{periodc}
		\end{align}
	\end{proposition}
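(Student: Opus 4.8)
The plan is to derive both congruences from explicit Stirling-number expressions for the poly-Bernoulli numbers, and then to reduce the relevant Stirling numbers modulo $p$ using Lemma \ref{bekistir} together with Fermat's little theorem. Throughout, the key mechanism is that modulo $p$ almost all Stirling contributions collapse, leaving a short expression whose value is governed by the classical dichotomy $(p-1)\mid k$ versus $(p-1)\nmid k$.

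For \eqref{periodb} I would start from the well-known formula $B_{p-1}^{(-k)}=\sum_{j=0}^{\min\{p-1,k\}}(j!)^2\stirs{p}{j+1}\stirs{k+1}{j+1}$ (see \cite{AIK}). By Lemma \ref{bekistir} and Fermat's little theorem, $j!\stirs{p}{j}\equiv j!\stirs{1}{j}\bmod p$, so that $\stirs{p}{m}\equiv0\bmod p$ for $2\le m\le p-1$ while $\stirs{p}{1}=\stirs{p}{p}=1$. Hence only the terms $j=0$ and $j=p-1$ survive; using Wilson's theorem $((p-1)!)^2\equiv1$ and $\stirs{k+1}{1}=1$, this leaves $B_{p-1}^{(-k)}\equiv 1+\stirs{k+1}{p}\bmod p$ (with the convention $\stirs{k+1}{p}=0$ when $k+1<p$). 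It then remains to evaluate $\stirs{k+1}{p}\bmod p$. Writing $\binom{p}{j}=\frac{p}{j}\binom{p-1}{j-1}$ in Lemma \ref{bekistir} and using $\binom{p-1}{j-1}\equiv(-1)^{j-1}$, one finds $\stirs{k+1}{p}\equiv-\sum_{l=1}^{p-1}l^{k}\bmod p$, and the classical power-sum congruence ($\sum_{l=1}^{p-1}l^k\equiv-1$ if $(p-1)\mid k$ and $\equiv0$ otherwise, for $k\ge1$) yields the three cases of \eqref{periodb}.

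For \eqref{periodc} I would first invoke the duality \eqref{dualc} to rewrite $C_{p-2}^{(-k-1)}=C_{k}^{(-(p-1))}$, which replaces the fixed lower index $p-2$ by the fixed \emph{parameter} $p-1$. Expanding $\mm{Li}_{-(p-1)}(1-e^{-t})/(e^t-1)$ via the standard identity $\mm{Li}_{-K}(u)=\sum_{j=1}^{K}j!\stirs{K}{j}u^j/(1-u)^{j+1}$, the substitution $u=1-e^{-t}$ (so $u^j/(1-u)^{j+1}=e^t(e^t-1)^j$), the splitting $e^t(e^t-1)^i=(e^t-1)^{i+1}+(e^t-1)^i$, and Lemma \ref{expstir}, gives $C_{k}^{(-(p-1))}=\sum_{j=1}^{p-1}j!\stirs{p-1}{j}\bigl(j!\stirs{k}{j}+(j-1)!\stirs{k}{j-1}\bigr)$. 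By Lemma \ref{bekistir} and Fermat's little theorem one has $j!\stirs{p-1}{j}\equiv(-1)^{j+1}\bmod p$ for $1\le j\le p-1$, so with $a_j:=j!\stirs{k}{j}$ the sum becomes $\sum_{j=1}^{p-1}(-1)^{j+1}(a_j+a_{j-1})$, which telescopes to $a_0-a_{p-1}\bmod p$. Finally $a_0=\stirs{k}{0}$ and, by Wilson, $a_{p-1}\equiv-\stirs{k}{p-1}$, and Lemma \ref{vonlemma} (case $a=1$) evaluates $\stirs{k}{p-1}\bmod p$; assembling these reproduces the two cases of \eqref{periodc}.

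The routine parts are the two Stirling reductions and the power-sum congruence. The main obstacle I expect is the $C$-computation: one must produce the correct explicit formula for $C_{k}^{(-(p-1))}$ and then recognize the telescoping structure $\sum(-1)^{j+1}(a_j+a_{j-1})=a_0-a_{p-1}$, since it is this cancellation — rather than any single Stirling evaluation — that isolates the boundary terms responsible for the dichotomy. Care is also needed with the small cases ($k=0$ versus $k\ge1$, and $k<p-1$ versus $k\ge p-1$) to confirm that the surviving boundary terms match each branch of \eqref{periodb} and \eqref{periodc}.
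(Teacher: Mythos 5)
The paper does not actually prove Proposition \ref{periodbc}: it imports the statement verbatim from Ohno--Sakata and Sakata (\cite{OS}, \cite{Sa}), so there is no in-paper argument to compare yours against, and your proposal has to be judged on its own merits. Judged so, it is correct. For \eqref{periodb}, your starting identity $B_{p-1}^{(-k)}=\sum_{j=0}^{\min\{p-1,k\}}(j!)^2\stirs{p}{j+1}\stirs{k+1}{j+1}$ is Kaneko's explicit formula (equivalently, the $n=0$ case of the paper's expansion of $\mathscr{B}_m^{(-l)}(n)$ together with \eqref{casesymber}), the collapse $\stirs{p}{m}\equiv0\bmod p$ for $2\le m\le p-1$ follows exactly as you say from Lemma \ref{bekistir} and Fermat, and the one genuinely delicate point --- that Lemma \ref{bekistir} with upper index $p$ involves division by $p!$, which is not a $p$-adic unit --- is handled correctly by first cancelling the factor $p$ of $\binom{p}{l}$ against $p!$, leaving only the unit $(p-1)!$ in the denominator; the resulting evaluation $\stirs{k+1}{p}\equiv-\sum_{l=1}^{p-1}l^k\bmod p$ (for $k\ge1$, with $\stirs{1}{p}=0$ covering $k=0$) plus the power-sum dichotomy gives all three cases. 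For \eqref{periodc}, the dualization $C_{p-2}^{(-k-1)}=C_k^{(-(p-1))}$ via \eqref{dualc}, the expansion $C_k^{(-(p-1))}=\sum_{j=1}^{p-1}j!\stirs{p-1}{j}\bigl(j!\stirs{k}{j}+(j-1)!\stirs{k}{j-1}\bigr)$, the congruence $j!\stirs{p-1}{j}\equiv(-1)^{j+1}\bmod p$ for $1\le j\le p-1$, and the telescoping $\sum_{j=1}^{p-1}(-1)^{j+1}(a_j+a_{j-1})=a_0-a_{p-1}$ (valid because $p-1$ is even) all check out, and the final assembly $C_k^{(-(p-1))}\equiv\stirs{k}{0}+\stirs{k}{p-1}\bmod p$ together with Lemma \ref{vonlemma} at $a=1$ reproduces both cases, with the $k=0$ case correctly absorbed into the branch $(p-1)\mid k$ via the term $\stirs{0}{0}=1$. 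A pleasant feature of your argument is that every ingredient (Lemmas \ref{expstir}, \ref{bekistir}, \ref{vonlemma}, duality \eqref{dualc}, Wilson and Fermat) is already available inside the paper, so it could serve as a self-contained replacement for the external citation.
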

	
	By duality formulas \eqref{dualb} and \eqref{dualc}, congruences in Proposition \ref{periodbc} can be written as 
	\begin{align*}
	    	B_{k}^{(-p+1)}\equiv \begin{cases}
	    		1 \bmod{p}& (k=0~\mm{or}~p-1 \nmid k), \\
	    		2 \bmod{p} & (k\neq0~\mm{and}~p-1 \mid k),
	    	\end{cases}~~~
	    	C_{k}^{(-p+1)}\equiv \begin{cases}
	    		0 \bmod{p}& (p-1 \nmid k), \\
	    		1 \bmod{p} & (p-1 \mid k).
	    	\end{cases}
	\end{align*}

    Also, Sakata and Pallewatta proved the following proposition in \cite{Sa} and in \cite{Pa}, respectively.
	
	\begin{proposition}\label{lemma}
		Let $p$ be an odd prime number. For $k\in\Z_{\geq0}$, we have
		\begin{align*}
			C_{p-1}^{(-k-1)}&\equiv 1 \bmod p,\\
			\co{p-1}{-2k-1}&\equiv 1 \bmod p. 
		\end{align*}
	\end{proposition}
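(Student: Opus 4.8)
The plan is to express each quantity as an explicit finite sum of products of Stirling numbers of the second kind, reduce modulo $p$, and observe that exactly one term survives. The decisive inputs are the residues of $\stirs{p}{j}$ and $\stirs{p-1}{j}$ mod $p$, together with the fact that the top term of each sum carries an explicit factor $p!$ and therefore dies. Since $p$ is odd, $2$ and every $j!$ with $j\le p-1$ are invertible mod $p$, so all these reductions are legitimate.

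For $\co{p-1}{-2k-1}$ I would apply Proposition \ref{sasaki} with $2n=p-1$ (so $n=(p-1)/2$) and Sasaki-index $2k+1$, giving
\[
\co{p-1}{-2k-1}=\sum_{i=1}^{\min\{p,\,2k+1\}}\frac{i!(i-1)!}{2^{i-1}}\stirs{2k+1}{i}\stirs{p}{i}.
\]
The key step is that $\stirs{p}{i}\equiv 0 \bmod p$ for $2\le i\le p-1$ while $\stirs{p}{1}=\stirs{p}{p}=1$; the former follows from Lemma \ref{congruencestir} with $N=1$, $n=p$, $m=1$ (note $p\equiv1\bmod\varphi(p)$) combined with $\stirs{1}{i}=\delta_{i,1}$, or directly from Lemma \ref{bekistir} and Fermat's little theorem. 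Only the $i=1$ and $i=p$ terms can then survive: the $i=p$ term contains $p!$ and vanishes mod $p$, while the $i=1$ term equals $\tfrac{1!\,0!}{2^{0}}\stirs{2k+1}{1}\cdot1=1$. This yields $\co{p-1}{-2k-1}\equiv1\bmod p$.

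For $C_{p-1}^{(-k-1)}$ I first need an explicit Stirling formula for $C_n^{(-k)}$, which I would derive from the two-variable generating function. Starting from $\sum_{n,k}B_n^{(-k)}\frac{x^n}{n!}\frac{y^k}{k!}=\frac{e^{x+y}}{e^x+e^y-e^{x+y}}$ and using $\frac{1}{e^x-1}=e^{-x}\frac{1}{1-e^{-x}}$ (the passage from $B$ to $C$), the generating function of $C_n^{(-k)}$ is $\frac{e^y}{e^x+e^y-e^{x+y}}$; setting $u=e^x-1$, $v=e^y-1$ turns this into $\frac{1+v}{1-uv}=\sum_j u^jv^j+\sum_j u^jv^{j+1}$, and Lemma \ref{expstir} gives
\[
C_n^{(-k)}=\sum_{j\ge0}(j!)^2\stirs{n}{j}\stirs{k}{j}+\sum_{j\ge0}j!(j+1)!\stirs{n}{j}\stirs{k}{j+1}.
\]
Specializing $n=p-1$ and replacing $k$ by $k+1$, the needed input is $\stirs{p-1}{j}$: from Lemma \ref{bekistir} and $l^{p-1}\equiv1\bmod p$ for $1\le l\le j\le p-1$ one gets $j!\stirs{p-1}{j}\equiv(-1)^{j+1}\bmod p$. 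Substituting this collapses the factorials; reindexing the second sum by $i=j+1$, the two sums telescope, all terms with $2\le i\le p-1$ cancel, the $i=p$ term is annihilated by $p!$, and only the $i=1$ term survives, contributing $1!\,\stirs{k+1}{1}=1$. Hence $C_{p-1}^{(-k-1)}\equiv1\bmod p$.

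The main obstacle is the $C$ case. One must first pin down the correct explicit formula — the naive guess with $\stirs{n+1}{j+1}$ and an alternating sign fails already for $C_1^{(-1)}$ — and then verify the exact telescoping cancellation between the two Stirling sums after the substitution $j!\stirs{p-1}{j}\equiv(-1)^{j+1}$. By contrast the cosecant case is essentially routine once $\stirs{p}{i}\bmod p$ is recorded, since Proposition \ref{sasaki} supplies the formula outright and the factor $p!$ removes the only other candidate term.
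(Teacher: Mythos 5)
Your proof is correct. Comparison with ``the paper's proof'' is somewhat moot, because the paper does not prove Proposition \ref{lemma} at all: it quotes the two congruences from the theses of Sakata \cite{Sa} and Pallewatta \cite{Pa}. Still, your argument sits squarely inside the paper's own toolkit. The cosecant half is essentially the same technique the paper itself uses later for $\co{p-1}{-p+1}$ in the proof of Proposition \ref{sakatatype}: apply Proposition \ref{sasaki} with $2n+1=p$, use $\stirs{p}{i}\equiv 0 \bmod p$ for $2\le i\le p-1$, and let the factor $p!$ kill the $i=p$ term; your reduction of this Stirling fact to Lemma \ref{congruencestir} (or Lemma \ref{bekistir} plus Fermat) is valid. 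The $C$ half genuinely adds content, since the paper never records an explicit Stirling formula for $C_n^{(-k)}$: your generating-function derivation of $\sum_{n,k}C_n^{(-k)}\frac{x^n}{n!}\frac{y^k}{k!}=\frac{e^y}{e^x+e^y-e^{x+y}}=\frac{1+v}{1-uv}$ with $u=e^x-1$, $v=e^y-1$ is correct (it mirrors the manipulations in Proposition \ref{stircota} and in \cite[\S 14]{AIK}), and the resulting formula $C_n^{(-k)}=\sum_{j}(j!)^2\stirs{n}{j}\stirs{k}{j}+\sum_{j}j!(j+1)!\stirs{n}{j}\stirs{k}{j+1}$ checks out on small cases. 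Two precisions you should make explicit in a write-up: the congruence $j!\stirs{p-1}{j}\equiv(-1)^{j+1}\bmod p$ holds only for $1\le j\le p-1$ and fails at $j=0$ (the left side is $0$ there), so the telescoping must be set up over $j\ge1$, which is legitimate precisely because $\stirs{p-1}{0}=0$ annihilates both $j=0$ terms; and the cancellation splits into the cases $k+1\le p-1$ (both reindexed sums end at $i=k+1$, no $i=p$ term exists) and $k+1\ge p$ (cancellation for $2\le i\le p-1$, with the $i=p$ term killed by $p!$), each leaving exactly the $i=1$ contribution $1!\,\stirs{k+1}{1}=1$. Neither point is a gap, only a sharpening of statements you left implicit.
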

	
	\begin{remark}
		By using Theorem \ref{kupoly}, we can see that $\co{p-1}{-k}\equiv 1\bmod p$ for $k\in\Z_{\geq1}$.
	\end{remark}
	
	By duality formulas \eqref{dualc} and \eqref{dualcose}, congruences in Proposition \ref{lemma} can be written as
	\begin{align*}
		C_{k}^{(-p)}\equiv 1 \bmod p,~~~
		\co{2k}{-p}\equiv 1 \bmod p. 
	\end{align*}
	We show that congruence formulas similar to \eqref{periodb} and \eqref{periodc} hold for $\co{n}{-k}$ and $\be{n}{-k}$.
	
	\begin{proposition}\label{sakatatype}
		Let $p$ be an odd prime number. For $n\in\Z_{\geq0}$, we have
		\begin{align*}
			\co{2n}{-p+1} &\equiv \begin{cases}
				0 \bmod{p}& (p-1 \nmid 2n), \\
				1 \bmod{p} & (p-1 \mid 2n),
			\end{cases}
			\\
			\be{2n}{-p+1} &\equiv \begin{cases}
				1 \bmod{p}& (2n=0~\mm{or}~ p-1 \nmid 2n), \\
				2 \bmod{p} & (2n\neq0~\mm{and}~p-1 \mid 2n).
			\end{cases}
		\end{align*}
	\end{proposition}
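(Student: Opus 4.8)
The plan is to derive both congruences from Sasaki's explicit formula (Proposition \ref{sasaki}), using the Kummer-type congruence (Theorem \ref{kupoly}) with $N=1$ to reduce the subscript $2n$ modulo $\varphi(p)=p-1$. The arithmetic input I would isolate first is the Stirling congruence
\[
i!\stirs{p-1}{i}\equiv(-1)^{i+1}\bmod p\qquad(1\le i\le p-1),
\]
which follows from Lemma \ref{bekistir} exactly as in the proof of Lemma \ref{congruencestir}: writing $i!\stirs{p-1}{i}=\sum_{l=0}^{i}(-1)^{i-l}\binom{i}{l}l^{p-1}$ and applying Fermat's little theorem to replace $l^{p-1}$ by $1$ for $1\le l\le i\le p-1$, the alternating binomial sum collapses to $(-1)^{i+1}$. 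The companion fact $i!\stirs{p}{i}\equiv\delta_{i,1}\bmod p$, coming from $l^{p}\equiv l$, will also be used.

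For the polycosecant part, Theorem \ref{kupoly} reduces $\co{2n}{-(p-1)}\bmod p$ (for $n\ge1$) to the residue $r:=2n\bmod(p-1)$, which is even. When $(p-1)\nmid2n$ we have $2\le r\le p-3$, and Sasaki's formula together with the Stirling congruence gives
\[
\co{r}{-(p-1)}\equiv\sum_{i=1}^{r+1}\frac{(-1)^{i+1}(i-1)!}{2^{i-1}}\stirs{r+1}{i}\bmod p.
\]
Here I would recognize the right-hand side, via Lemma \ref{expstir}, as $(r+1)!\,[t^{r+1}]\,g(t)$ with
\[
g(t)=2\sum_{i\ge1}\frac{(-1)^{i+1}}{i}\Bigl(\tfrac{e^{t}-1}{2}\Bigr)^{i}=2\log\frac{e^{t}+1}{2}=t+2\log\cosh\frac{t}{2}.
\]
Since $g(t)-t$ is an even function, its only odd-degree Taylor coefficient sits in degree $1$; as $r+1$ is odd and $\ge3$, the coefficient vanishes and $\co{2n}{-(p-1)}\equiv0$. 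When $(p-1)\mid2n$ with $n\ge1$, the reduction lands on $\co{p-1}{-(p-1)}$, and inserting $i!\stirs{p}{i}\equiv\delta_{i,1}$ into Sasaki's formula leaves only the $i=1$ term, giving $1$; the boundary value $\co{0}{-(p-1)}=1$ is immediate. This proves the first congruence.

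For the polycotangent part I would pass through \eqref{cotacose}: reducing modulo $p$ and substituting the values of $\co{2i}{-(p-1)}$ just found leaves
\[
\be{2n}{-(p-1)}\equiv\sum_{\substack{0\le i\le n\\(p-1)\mid2i}}\binom{2n}{2i}=\sum_{j\equiv0\,(p-1)}\binom{2n}{j}\bmod p,
\]
where the reindexing is legitimate because every multiple of $p-1$ is automatically even. I would evaluate this by a roots-of-unity filter carried out inside $\F_{p}$: since the $(p-1)$-th roots of unity are exactly $\F_{p}^{\times}$, one has $\sum_{j\equiv0\,(p-1)}\binom{2n}{j}\equiv-\sum_{a\in\F_{p}^{\times}}(1+a)^{2n}$, and the substitution $b=1+a$ together with the power-sum evaluation $\sum_{b\in\F_{p}^{\times}}b^{m}\equiv-[\,(p-1)\mid m\,]$ (valid for $m\ge1$) yields $2$ when $(p-1)\mid2n$ with $2n\ge2$, and $1$ otherwise. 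Combined with $\be{0}{-(p-1)}=1$, this is exactly the stated dichotomy.

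The main obstacle is the truncation in Sasaki's sum: for large $n$ the summation index only runs to $p-1$, so naively extending the sum to the generating function $g(t)$ would introduce spurious terms (for instance an $i=p$ contribution from $\stirs{p}{p}=1$, precisely where the Stirling congruence $i!\stirs{p-1}{i}\equiv(-1)^{i+1}$ no longer applies). Reducing the subscript modulo $p-1$ via Theorem \ref{kupoly} before invoking the generating-function identity is what keeps every index in the safe range $i\le r+1<p$, while the residue case $(p-1)\mid2n$ is dispatched separately through $i!\stirs{p}{i}\equiv\delta_{i,1}$. The genuine computational crux is spotting the closed form $g(t)=t+2\log\cosh(t/2)$, whose evenness beyond the linear term instantly annihilates all higher odd-degree coefficients.
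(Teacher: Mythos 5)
Your proof is correct, and its computational core is genuinely different from the paper's. The shared skeleton is the same: reduce the index modulo $p-1$ with Theorem \ref{kupoly}, use Sasaki's formula (Proposition \ref{sasaki}) when $(p-1)\mid 2n$ (where both you and the paper keep only the $i=1$ term via $\stirs{p}{m}\equiv 0\bmod p$ for $2\le m\le p-1$), and pass to $\be{2n}{-p+1}$ through \eqref{cotacose}. The differences are in the other two steps. For $(p-1)\nmid 2n$ the paper argues by induction on $n$ in the range $2n\le p-3$, solving the recurrence of Proposition \ref{recurcose}, $\co{2n}{-p}=\sum_{m}\binom{2n+1}{2m+1}\co{2n-2m}{-p+1}$, with the external input $\co{2n}{-p}\equiv 1\bmod p$ (Proposition \ref{lemma}, i.e.\ \cite[Theorem 3.13]{Pa}); this forces $p=3$ to be treated separately via $\co{2n}{-2}=4^n$. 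You instead feed the congruence $i!\stirs{p-1}{i}\equiv(-1)^{i+1}\bmod p$ into Sasaki's formula and identify the resulting sum as $(r+1)!$ times the $t^{r+1}$-coefficient of $2\log\frac{e^t+1}{2}=t+2\log\cosh\frac{t}{2}$, which vanishes by parity since $r+1$ is odd and at least $3$; this is self-contained (no appeal to Pallewatta's congruence, no special case for $p=3$), at the price of one extra generating-function identity. For the cotangent part, the paper's term-survival argument (only $\co{0}{-p+1}$, plus $\co{p-1}{-p+1}$ when $2n=p-1$, contribute) is written for $2n\le p-1$ and leaves larger $n$ to the periodicity noted before Proposition \ref{periodbc}; your roots-of-unity filter $\sum_{(p-1)\mid j}\binom{2n}{j}\equiv-\sum_{a\in\F_p^{\times}}(1+a)^{2n}\bmod p$, followed by the power-sum evaluation over $\F_p^{\times}$, handles all $n$ uniformly with no preliminary reduction. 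In short, your route buys independence from external congruences and uniformity in $n$; the paper's route keeps every step a one-line consequence of results it has already stated.
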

	
	\begin{proof}
		First, we show the case of $\co{2n}{-p+1}$. For $p=3$, we have $\co{2n}{-2}=4^n$, so the proposition holds. For $p\neq3$, we prove the proposition by induction on $n$. When $n=0$, we have $\co{0}{-p+1}=1$. When $n=1$, by using Proposition \ref{lemma}, we have
		\begin{align*}
			3\co{2}{-p+1}&=\co{2}{-p}-\co{0}{-p+1}\\
			&\equiv 1-1=0\bmod{p}
		\end{align*}
		(because $\co{2}{-p}\equiv 1 \bmod{p}$ was shown in \cite[Theorem 3.13]{Pa}), and we have $\co{2}{-p+1}\equiv 0\bmod p$. The same calculation shows that $\co{2n}{-p+1}\equiv 0\bmod p$ holds for $2n=2, 4, \ldots, p-3$. When $2n=p-1$, we have
		\begin{align*}
			\co{p-1}{-p+1}=\sum_{i=1}^{p-1}\frac{i!(i-1)!}{2^{i-1}}\stirs{p-1}{i}\stirs{p}{i}.
		\end{align*}
		Since $\stirs{p}{m}\equiv 0 \bmod p$ holds for $m=2,3,\ldots, p-1$, only the term $i=1$ remains and we have $\co{p-1}{-p+1}\equiv1 \bmod{p}$. For $\be{2n}{-p+1}$, since
		\begin{align*}
			\be{2n}{-p+1}=\sum_{i=0}^{n}\binom{2n}{2i}\co{2n-2i}{-p+1},
		\end{align*} 
		we have
		\begin{align*}
			\be{2n}{-p+1}&\equiv \co{0}{-p+1} =1\bmod p~~(2n\neq p-1),\\
			\be{p-1}{-p+1}&\equiv \co{0}{-p+1}+\co{p-1}{-p+1}\equiv 2\bmod p.
		\end{align*}
		Hence we obtain Proposition \ref{sakatatype}.
	\end{proof}

    In addition, $\co{n}{-k}$ and $\be{n}{-k}$ satisfy the congruence formula similar to \eqref{sum}.
    Here, we denote by $T_{2n+1}$ the tangent number defined by 
    \begin{align*}
    	\tan{t}=\sum_{n=0}^{\infty} T_{2n+1} \frac{t^{2n+1}}{(2n+1)!}
    \end{align*}
    (see \cite{No}), and 
    \[
    \widetilde{T}_{2n}=\begin{cases}
    	1&(n=0),\\
    	(-1)^{n-1}T_{2n+1}&(n\in\Z_{\geq1}).
    \end{cases}
    \]
    It is known that 
    \[
    T_{2n+1}=\begin{cases}
    	2^{2n}(2^{2n}-1)\frac{B_{2n}}{2n}&(n\in\Z_{\geq1}),\\
    	1&(n=0).
    \end{cases}
    \]
    Also, we have
    \[
    \sum_{n=0}^{\infty} \widetilde{T}_{2n} \frac{t^{2n}}{(2n)!}=1+\tanh^2{t}.
    \]
    
    \begin{proposition}
    	Let $p$ be an odd prime number. For $n \in\Z_{\geq 0}$ and $k, N\in\Z_{\geq 1}$ with $k\geq N$, we have
    	\begin{align}
    		2^{2n}\sum_{i=0}^{\varphi(p^N)-1} \co{2n}{-k-i}&\equiv (-1)^nT_{2n+1}\varphi(p^N) \bmod{p^N},\label{sumcose}\\
    		2^{2n}\sum_{i=0}^{\varphi(p^N)-1} \be{2n}{-k-i}&\equiv \widetilde{T}_{2n}\varphi(p^N) \bmod{p^N}.\label{sumcota}
    	\end{align}
    	Hence we have 
    	\begin{align*}
    		\sum_{i=0}^{\varphi(p^N)-1} \co{2n}{-k-i}&\equiv 0 \bmod{p^{N-1}},\\
    		\sum_{i=0}^{\varphi(p^N)-1} \be{2n}{-k-i}&\equiv 0 \bmod{p^{N-1}}.
    	\end{align*}
    \end{proposition}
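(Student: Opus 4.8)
The plan is to prove the polycosecant statement \eqref{sumcose} first, then deduce \eqref{sumcota} from it via \eqref{cotacose}, and finally read off the divisibility congruences. Throughout write $M=\varphi(p^N)=(p-1)p^{N-1}$, so $\ordp(M)=N-1$, and recall that $p$ is odd, so every power of $2$ is a unit in $\zetp$. First I would use Proposition \ref{sasaki} together with $\stirs{k+i}{j}=0$ for $j>k+i$ to write
\[
\sum_{i=0}^{M-1}\co{2n}{-k-i}=\sum_{j=1}^{2n+1}\frac{(j-1)!}{2^{j-1}}\stirs{2n+1}{j}\,\Bigl(j!\,A_j\Bigr),\qquad A_j:=\sum_{i=0}^{M-1}\stirs{k+i}{j}.
\]
By Lemma \ref{bekistir}, $j!\,A_j=\sum_{l=0}^{j}(-1)^{j-l}\binom{j}{l}\,G(l)$ with $G(l):=\sum_{i=0}^{M-1}l^{k+i}$, so everything reduces to evaluating these power sums modulo $p^N$.

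The heart of the argument, and the step I expect to be the main obstacle, is the claim that $G(l)\equiv M\bmod p^N$ when $l\equiv1\bmod p$ and $G(l)\equiv0\bmod p^N$ otherwise. When $p\mid l$ (including $l=0$) each term $l^{k+i}$ has $p$-valuation $\ge k\ge N$, so $G(l)\equiv0$; here the hypothesis $k\ge N$ is essential. When $p\nmid l$ and $l\not\equiv1\bmod p$, I factor $G(l)=l^{k}\frac{l^{M}-1}{l-1}$ and use Euler's theorem $l^{M}\equiv1\bmod p^N$ together with the invertibility of $l-1$. The delicate case is $l\equiv1\bmod p$: writing $l=1+pt$ and expanding $l^{i}=\sum_{r}\binom{i}{r}(pt)^{r}$, the hockey-stick identity $\sum_{i=0}^{M-1}\binom{i}{r}=\binom{M}{r+1}$ gives $\sum_{i=0}^{M-1}l^{i}\equiv\sum_{r}(pt)^{r}\binom{M}{r+1}$, and a valuation estimate (using $\ordp(M)=N-1$ and $r>\ordp(r+1)$ for $r\ge1$, where oddness of $p$ enters) kills every $r\ge1$, leaving $\binom{M}{1}=M$; finally $l^{k}\equiv1\bmod p$ upgrades $G(l)=l^{k}M$ to $M$ modulo $p^N$. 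Obtaining the \emph{sharp} modulus $p^{N}$ here, rather than the $p^{N-1}$ that a crude geometric-series bound would give, is the crux. This yields $j!\,A_j\equiv M\,c_j\bmod p^N$, where $c_j:=\sum_{0\le l\le j,\ l\equiv1\,(p)}(-1)^{j-l}\binom{j}{l}$.

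Since $\ordp(M)=N-1$, the class of $M c_j$ modulo $p^N$ depends only on $c_j$ modulo $p$, which I would compute by Lucas' theorem: writing $j=ap+b$ with $0\le b<p$ and $l=cp+1$, one gets $\binom{j}{l}\equiv b\binom{a}{c}$ and $(-1)^{j-l}=(-1)^{j-1}(-1)^{c}$, whence $c_j\equiv(-1)^{j-1}b\sum_{c=0}^{a}(-1)^{c}\binom{a}{c}=(-1)^{j-1}j\,[a=0]\bmod p$. Thus $c_j\equiv(-1)^{j-1}j\bmod p$ for $1\le j\le p-1$ and $c_j\equiv0\bmod p$ otherwise. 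Replacing each $c_j$ by its residue, and then re-extending the range to $j=2n+1$ (the added terms $j\ge p$ carry a factor $j!\equiv0\bmod p$, hence contribute $0$ after multiplication by $M$), I obtain
\[
\sum_{i=0}^{M-1}\co{2n}{-k-i}\equiv M\sum_{j=1}^{2n+1}(-1)^{j-1}\frac{j!}{2^{j-1}}\stirs{2n+1}{j}\bmod p^N.
\]
By Lemma \ref{tax2} with $m=1$, the coefficient of $t^{2n+1}/(2n+1)!$ in $\tanh(t/2)$ is $(-1)^{n}T_{2n+1}2^{-2n-1}$, so the inner sum equals $(-1)^{n}T_{2n+1}2^{-2n}$; multiplying through by $2^{2n}$ gives \eqref{sumcose}.

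For \eqref{sumcota} I would apply \eqref{cotacose} to write $2^{2n}\sum_{i}\be{2n}{-k-i}=\sum_{r=0}^{n}\binom{2n}{2r}2^{2n-2r}\bigl(2^{2r}\sum_{i}\co{2r}{-k-i}\bigr)$ and insert \eqref{sumcose}; it then remains to prove the exact identity $\sum_{r=0}^{n}\binom{2n}{2r}2^{2n-2r}(-1)^{r}T_{2r+1}=\widetilde{T}_{2n}$. This is a generating-function computation: the binomial convolution corresponds to the product $\cosh(2t)\cdot(1-\tanh^{2}t)=\cosh(2t)/\cosh^{2}t=1+\tanh^{2}t$, which is precisely the generating function of $\widetilde{T}_{2n}$. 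Finally, because $\ordp(\varphi(p^N))=N-1$ and $2^{2n}$ is a unit, both right-hand sides in \eqref{sumcose} and \eqref{sumcota} are $\equiv0\bmod p^{N-1}$, which gives the last two displayed congruences.
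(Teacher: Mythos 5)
Your proof is correct and takes essentially the same approach as the paper: reduce via Proposition \ref{sasaki} and Lemma \ref{bekistir} to the power sums $\sum_{i}l^{k+i}$, show these are congruent to $\varphi(p^N)$ or $0$ modulo $p^N$ according as $l\equiv 1\bmod p$ or not, identify the resulting Stirling-number sum with $(-1)^nT_{2n+1}$ through the generating function of $\tanh$, and deduce the cotangent case from \eqref{cotacose} together with $\cosh(2t)(1-\tanh^{2}t)=1+\tanh^{2}t$. The only real difference is presentational: you spell out (via the hockey-stick/valuation estimate and Lucas' theorem) the power-sum congruence that the paper asserts in a single displayed line, and you read the tangent-number identity off Lemma \ref{tax2} with $m=1$ instead of expanding $\tanh t$ as a geometric series in $(1-e^{2t})/2$.
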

    
    \begin{proof}
    	First, we show \eqref{sumcose}. By using Proposition \ref{sasaki} and Lemma \ref{bekistir}, we have
    	\begin{align*}
    		2^{2n}\sum_{i=0}^{\varphi(p^N)-1} \co{2n}{-k-i}&=\sum_{j=0}^{2n} 2^{2n-j}j!\stirs{2n+1}{i+1}\sum_{l=0}^{j+1}(-1)^{l+j+1}\binom{j+1}{l}l^{k+i}\\
    		&\equiv \varphi(p^N) \sum_{j=0}^{2n} (-1)^j2^{2n-j}(j+1)!\stirs{2n+1}{j+1} \bmod{p^N}.
    	\end{align*}
    	Also, a direct calculation shows that 
    	\begin{align*}
    		\sum_{n=0}^{\infty} (-1)^nT_{2n+1}\frac{t^{2n+1}}{(2n+1)!}&=\tanh{t}=\frac{(1-e^{2t})/2}{1-(1-e^{2t})/2}\\
    		&=\sum_{n=1}^{\infty} \left(\sum_{j=1}^{n} (-1)^{j-1}2^{n-j}j!\stirs{n}{j}\right)\frac{t^n}{n!}.
    	\end{align*}
    	Hence we have
    	\begin{align*}
    		(-1)^nT_{2n+1}=\sum_{j=0}^{2n} (-1)^{j}2^{2n-j}(j+1)!\stirs{2n+1}{j+1}.
    	\end{align*}
    	Therefore we obtain \eqref{sumcose}. For \eqref{sumcota}, we have
    	\begin{align*}
    		2^{2n}\sum_{i=0}^{\varphi(p^N)-1} \be{2n}{-k-i}
    		&=\sum_{j=0}^{n}2^{2n-2j}\binom{2n}{2j} 2^{2j}\sum_{i=0}^{\varphi(p^N)-1}\co{2j}{-k-i}\\
    		&\equiv \varphi(p^N)\sum_{j=0}^{n}2^{2n-2j}\binom{2n}{2j} (-1)^jT_{2j+1} \bmod{p^N}.
    	\end{align*}
    	Also, a direct calculation shows that
    	\begin{align*}
    		1+\tanh^2{t}&=\cosh{2t}\dv{}{t}\tanh{t}\\
    		&=\sum_{n=0}^{\infty} \left(\sum_{j=0}^{n} 2^{2n-2j}\binom{2n}{2j}(-1)^jT_{2j+1}\right)\frac{t^{2n}}{(2n)!}.
    	\end{align*}
    	Hence we have
    	\begin{align*}
    		\widetilde{T}_{2n}=\sum_{j=0}^{n} 2^{2n-2j}\binom{2n}{2j}(-1)^jT_{2j+1}.
    	\end{align*}
    	Therefore we obtain \eqref{sumcota}.
    \end{proof}
    
    \begin{remark}
    	The similar calculation shows that 
    	\begin{align*}
    		\sum_{i=0}^{\varphi(p^N)-1} C_n^{(-k-i)}&\equiv (-1)^n\varphi(p^N)\bmod{p^N}
    	\end{align*}
    	for an odd prime number $p$, $n \in\Z_{\geq 0}$ and $k, N\in\Z_{\geq 1}$ with $k\geq N$.
    \end{remark}

    \begin{example}
    	For $p=2, n=3, k=3$ and $N=2$, we have $\co{6}{-3}=1093, \co{6}{-4}=12160, \co{6}{-5}=111721, \co{6}{-6}=927424, \co{6}{-7}=7256173, \be{6}{-8}=54726400$ and $T_{7}=272$. Then we can see
    	\[
    	2^6\sum_{i=0}^{5}\co{6}{-3-i}\equiv 6\equiv -272\times6\bmod9.
    	\]
    	
    	Also, we have $\be{6}{-3}=3104, \be{6}{-4}=23801, \be{6}{-5}=174752, \be{6}{-6}=1257125, \be{6}{-7}=8948384, \be{6}{-8}=63318641$ and $\widetilde{T}_{7}=272$. Then we can see
    	\[
    	2^6\sum_{i=0}^{5}\be{6}{-3-i}\equiv 3\equiv 272\times6\bmod9.
    	\]
    \end{example}
	
	\subsection{Clausen von-Staudt type theorem}
	
	In this subsection, we prove the Clausen von-Staudt type theorem.
	
	\begin{proposition}\label{1von}
		Let $p$ be an odd prime number. For $n\in\Z_{\geq1}$, we have
		\[
		\ordp(d(2n))=\ordp(\widehat{\beta}(2n))=\ordp(b(2n)),
		\]
		where $b(2n)$, $d(2n)$ and $\widehat{\beta}(2n)$ denote the denominator of $B_{2n}$, $\co{2n}{1}$ and $\be{2n}{1}$ , respectively.
	\end{proposition}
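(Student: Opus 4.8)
The plan is to reduce everything to the ordinary Bernoulli number $B_{2n}$ via the two identities recorded in the preceding Remark, namely $\co{2n}{1}=(2-2^{2n})B_{2n}$ from \eqref{coseber} and $\be{2n}{1}=2^{2n}B_{2n}$, and then to track how the integer prefactors $2^{2n}$ and $2-2^{2n}$ affect the $p$-part of the denominator. The basic bookkeeping device is that for any nonzero rational $x$ one has $\ordp(\text{denominator of }x)=\max\{0,-\ordp(x)\}$, so it suffices throughout to compare $p$-adic valuations rather than denominators directly.

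The case of $\be{2n}{1}$ is immediate: since $p$ is odd, $2^{2n}$ is a $p$-adic unit, so $\ordp(\be{2n}{1})=\ordp(2^{2n})+\ordp(B_{2n})=\ordp(B_{2n})$, and hence $\ordp(\widehat{\beta}(2n))=\ordp(b(2n))$ with no further work. For $\co{2n}{1}$ I would write $\ordp(\co{2n}{1})=\ordp(2-2^{2n})+\ordp(B_{2n})$; because $2-2^{2n}\in\Z$ its valuation is nonnegative, so multiplication can only raise the valuation of the number and hence weakly lower the denominator valuation, giving $\ordp(d(2n))\leq\ordp(b(2n))$ for free.

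To upgrade this to an equality I would invoke the Clausen--von Staudt theorem (Theorem \ref{theoremvon}): the denominator of $B_{2n}$ is squarefree, and $p$ divides it exactly when $(p-1)\mid 2n$, so $\ordp(B_{2n})\in\{0,-1\}$ with $\ordp(B_{2n})=-1$ precisely when $(p-1)\mid 2n$. When $(p-1)\nmid 2n$ we have $\ordp(B_{2n})\geq 0$, hence $\ordp(\co{2n}{1})\geq 0$, and both denominators are $p$-adically trivial, so the claimed equality holds.

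The one genuinely substantive step is the remaining case $(p-1)\mid 2n$: here I must check that the prefactor $2-2^{2n}$ does not cancel the single factor of $p$ sitting in the denominator of $B_{2n}$, i.e.\ that $\ordp(2-2^{2n})=0$. This is where Fermat's little theorem enters: since $p$ is odd we have $\gcd(2,p)=1$, and $(p-1)\mid 2n$ forces $2^{2n}\equiv 1\bmod p$, so $2-2^{2n}\equiv 1\bmod p$ is a $p$-adic unit. Consequently $\ordp(\co{2n}{1})=\ordp(B_{2n})=-1$ and $\ordp(d(2n))=1=\ordp(b(2n))$, which finishes the equality in all cases. The main (and essentially only) obstacle is precisely this matching of the zero locus of $2-2^{2n}$ modulo $p$ with the poles of $B_{2n}$; everything else is routine manipulation of valuations.
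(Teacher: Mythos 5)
Your proof is correct, and on the cosecant half it coincides with the paper's argument: by \eqref{coseber} and Clausen--von Staudt (Theorem \ref{theoremvon}), only primes with $(p-1)\mid 2n$ can appear in the denominator of $\co{2n}{1}$, and for such $p$ Fermat's little theorem gives $2-2^{2n}\equiv 1 \bmod p$, so $\ordp(d(2n))=\ordp(b(2n))$. Where you genuinely diverge is the cotangent half. You use the identity $\be{2n}{1}=2^{2n}B_{2n}$ (recorded in the same Remark as \eqref{coseber}) and simply observe that $2^{2n}$ is a $p$-adic unit for odd $p$, so $\ordp(\be{2n}{1})=\ordp(B_{2n})$ with no case distinction whatsoever. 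The paper instead works with the difference: it derives $\be{2n}{1}-\co{2n}{1}=2(2^{2n}-1)B_{2n}$ from the generating functions and shows this difference lies in $\zetp$ --- which requires Clausen--von Staudt again together with the observation that the factor $2^{2n}-1$ vanishes mod $p$ precisely when $(p-1)\mid 2n$, cancelling the pole of $B_{2n}$ --- and then transfers the denominator from $\co{2n}{1}$ to $\be{2n}{1}$. (Incidentally, the paper states that congruence under the hypothesis $(p-1)\nmid 2n$, which is evidently a typo for $(p-1)\mid 2n$; your version has no analogous pitfall.) Your route is the more economical one: it settles $\widehat{\beta}(2n)$ in a single line, uses only the two identities already displayed in the Remark, and invokes Fermat's little theorem exactly once, in the one place it is truly needed. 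The paper's difference argument buys nothing extra here, though it illustrates a transfer principle (if two numbers differ by an element of $\zetp$, their denominators have the same $p$-order) that is reusable when no clean product formula like $\be{2n}{1}=2^{2n}B_{2n}$ is available.
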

	
	\begin{proof}
		By \eqref{coseber} and Theorem \ref{theoremvon}, we see that only a prime $p$ with $(p-1)\mid 2n$ possibly appears in the denominator of $\co{2n}{1}$. On the other hand, we have for this $p$,
		\[
		2-2^{2n}\equiv 1 \bmod p.
		\]
		Therefore we have $\ordp(d(2n))=\ordp(b(2n))$. For $\be{2n}{1}$, by considering the generating function of $\be{2n}{1}-\co{2n}{1}$, we have
		\[
		\be{2n}{1}-\co{2n}{1}=2(2^{2n}-1)B_{2n}.
		\]
		On the other hand, as stated above, if $(p-1) \nmid 2n$, then
		\[
		2^{2n}-1\equiv 0 \bmod p.
		\]
		Hence we have $\be{2n}{1}-\co{2n}{1}\in\zetp$ and obtain Proposition \ref{1von}.
	\end{proof}
	
	\begin{theorem}\label{vonco}
		Let $p$ be an odd prime number and $k\geq 2$ be an integer satisfying $k+2\leq p\leq 2n+1$.
		\begin{enumerate}
			\item When $(p-1) \mid 2n$, $p^k\co{2n}{k}$and $p^k\be{2n}{k}\in\zetp$. More explicitly,
			\[
			p^k\co{2n}{k}\equiv p^k\be{2n}{k}\equiv-1 \bmod p
			\]
			holds.
			\item When $(p-1) \nmid 2n$, $p^{k-1}\co{2n}{k}$and $p^{k-1}\be{2n}{k}\in\zetp$. More explicitly, 
			\begin{align*}
				p^{k-1}\co{2n}{k}&\equiv -\frac{1}{p}\stirs{2n}{p-1}+\sum_{\substack{l=p\\l\equiv1\bmod{p-1}}}^{2n}\binom{2n+1}{l}\sum_{j=0}^{\alpha-1}\frac{(-1)^jj!}{2^{j+1}}\stirs{\alpha}{j+1} \bmod{p}, \\
				p^{k-1}\be{2n}{k}&\equiv -\frac{1}{p}\stirs{2n}{p-1}+\sum_{\substack{l=p\\l\equiv1\bmod{p-1}}}^{2n}\binom{2n+1}{l}\sum_{j=0}^{\alpha-1}\frac{(-1)^jj!}{2^{j+1}}\stirs{\alpha}{j+1}\\
				&~~~+\sum_{j=p}^{\gamma} \frac{(-1)^j}{2^j}\frac{(j+2)!}{p}\stirs{2n}{j+2}\binom{j+1}{p} \bmod p
			\end{align*}
			hold, where $\alpha\in\Z$ is the remainder of $2n$ modulo $p-1$ and $\gamma=\min{\left\{2n, 2p-3\right\}}$.
		\end{enumerate}
	\end{theorem}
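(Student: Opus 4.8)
The plan is to follow the template of Theorem \ref{clausenpb}, working from the explicit formula of Proposition \ref{explicose} for $\co{2n}{k}$ and handling $\be{2n}{k}$ through the decomposition $\be{2n}{k}=\co{2n}{k}+R_{2n}^{(k)}$, where $R_{2n}^{(k)}:=\be{2n}{k}-\co{2n}{k}$ is the correction term computed in the proof of Proposition \ref{explicota} (a double sum over $\binom{j-1}{2i+1}\stirs{2n}{j}$). First I would locate the non-integral part of $\co{2n}{k}$: in Proposition \ref{explicose} the only denominator is the factor $(2i+1)^{-(k+1)}$, so $p$ enters only through indices with $p\mid(2i+1)$, namely $2i+1\in\{p,3p,5p,\dots\}$. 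For such an index the inner sum begins at $j=2i+1$, whence $\ordp(j!)\ge\ordp((2i+1)!)$, and the hypothesis $k+2\le p$ (so $k+1\le p-1$) makes a valuation count effective: every index with $2i+1$ coprime to $p$, and every index with $2i+1\ge 3p$, becomes divisible by $p$ after multiplication by $p^{k-1}$. Hence $p^{k}\co{2n}{k}\in\zetp$, and the residue of $p^{k}\co{2n}{k}$ (respectively of $p^{k-1}\co{2n}{k}$) modulo $p$ is carried entirely by the single index $2i+1=p$; the analogous reduction for $\be{2n}{k}$ follows by applying the same count to $R_{2n}^{(k)}$.

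In Case 1, where $(p-1)\mid 2n$, I would evaluate the $2i+1=p$ term of $p^k\co{2n}{k}$ modulo $p$. Reducing $\binom{j-1}{p-1}$ by Lucas' theorem shows it vanishes modulo $p$ unless $p\mid j$, and a valuation count then leaves only $j=p$ contributing modulo $p$. Using Wilson's congruence $(p-1)!\equiv-1$, Fermat's congruence $2^{p-1}\equiv1$, and the recursion $\stirs{2n+1}{p}\equiv\stirs{2n}{p-1}\bmod p$, the surviving contribution equals $-\stirs{2n}{p-1}\bmod p$; by Lemma \ref{vonlemma} with $a=1$ we have $\stirs{2n}{p-1}\equiv1$ precisely when $(p-1)\mid 2n$, so $p^k\co{2n}{k}\equiv-1$. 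Since each term of $R_{2n}^{(k)}$ carries one more factor from $\ordp(j!)$ than the matching term of $\co{2n}{k}$, one checks $p^kR_{2n}^{(k)}\equiv0\bmod p$, and therefore $p^k\be{2n}{k}\equiv p^k\co{2n}{k}\equiv-1$.

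The substance of the theorem is Case 2, where $(p-1)\nmid 2n$ forces $\stirs{2n}{p-1}\equiv0\bmod p$ by Lemma \ref{vonlemma}, so the leading contribution of Case 1 vanishes, $p^{k-1}\co{2n}{k}\in\zetp$, and one must expand the $2i+1=p$ term to one higher $p$-adic order. Concretely I would push the expansions of $j!/p^{k+1}$ and of $\binom{j-1}{p-1}$ one $p$-adic digit further (the relevant second-order values of $\binom{j-1}{p-1}$ for $p\le j<2p$ and for $j=ap$ are elementary), and reduce the resulting Stirling numbers $\stirs{2n+1}{j}$ modulo $p$ by Lemma \ref{vonlemma}. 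To reach the stated closed form I would then rewrite $j!\stirs{2n+1}{j}$ through Lemma \ref{bekistir} as an alternating sum of powers $l^{2n+1}$ and apply Fermat's congruence $l^{2n+1}\equiv l^{\alpha+1}\bmod p$ for $p\nmid l$, where $\alpha$ is the residue of $2n$ modulo $p-1$; the residues $l\equiv0\bmod p$ separate out, and after reindexing the surviving data collapse, modulo $p$, to $-\tfrac1p\stirs{2n}{p-1}+\sum_{l}\binom{2n+1}{l}\sum_{j}\frac{(-1)^jj!}{2^{j+1}}\stirs{\alpha}{j+1}$ with $l\ge p$ and $l\equiv1\bmod(p-1)$, as claimed. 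For $\be{2n}{k}$ the identical reduction applied to $R_{2n}^{(k)}$ yields the extra sum $\sum_{j=p}^{\gamma}\frac{(-1)^j}{2^j}\frac{(j+2)!}{p}\stirs{2n}{j+2}\binom{j+1}{p}$; the cutoff $\gamma=\min\{2n,2p-3\}$ arises because $\binom{j+1}{p}$ forces $j+1\ge p$ while the valuation bound kills the indices with $j+2\ge 2p$.

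The main obstacle is this second-order computation in Case 2. Once the leading term cancels, extracting the residue modulo $p$ means reading off a single coefficient from a sum whose terms have mixed $p$-adic valuations, so the delicate points are (i) deciding exactly which pairs $(i,j)$ still contribute after dividing out the extra factor of $p$, and (ii) executing the Fermat reduction and the reindexing that turns the surviving Stirling data into the binomial shape $\sum_l\binom{2n+1}{l}(\cdots)$. I expect the valuation bookkeeping — in particular confirming that no index $2i+1=3p,5p,\dots$ and no higher $j$-block sneaks back in at this finer order — to require the most care, while the structural steps run parallel to the poly-Bernoulli computation behind Theorem \ref{clausenpb}.
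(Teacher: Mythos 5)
Your skeleton is the paper's: the explicit formula of Proposition \ref{explicose} plus the correction term from Proposition \ref{explicota} for $\be{2n}{k}$, a valuation analysis organized by the power of $p$ dividing $2i+1$, isolation of the block $2i+1=p$, and Wilson/Lucas/Lemma \ref{vonlemma} for the main term; your Case 1 runs essentially as the paper's does. But two of your steps fail, and they are exactly where the content of the theorem lies. First, your claim that a valuation count alone shows every index with $2i+1\geq 3p$ dies after multiplication by $p^{k-1}$ is false at the boundary $p=k+2$, which the hypothesis $k+2\leq p$ permits: for $2i+1=p^2$ and $j=p^2$ one has $\ordp\bigl(j!/(2i+1)^{k+1}\bigr)=(p+1)-2(k+1)=-(k-1)$, so $p^{k-1}$ times this term is a $p$-adic unit times $\stirs{2n+1}{p^2}$ and valuation gives nothing. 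The paper kills this term not by valuation but by proving $\stirs{2n+1}{p^2}\equiv\stirs{2n}{p^2-1}\equiv 0 \bmod p$ via Lemma \ref{vonlemma} \emph{and the Case 2 hypothesis} $(p-1)\nmid 2n$; this Stirling-vanishing argument, not finer bookkeeping, is what your proposal would have to supply.

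Second, and more seriously, your route to the Case 2 closed form cannot work as described. After multiplying by $p^{k-1}$, the surviving $2i+1=p$ terms carry a factor $1/p^2$, and for $j\geq p$ both sides of Lemma \ref{bekistir} are divisible by $p$; hence extracting these terms modulo $p$ requires the alternating sums $\sum_l(-1)^l\binom{j}{l}l^{2n+1}$ modulo $p^2$ (for the $j=p$ term, modulo $p^3$). Fermat's congruence controls $l^{2n+1}$ only modulo $p$, while $l^{2n+1}\bmod p^2$ depends on $2n$ modulo $p(p-1)$, not on $\alpha$ alone. The loss is visibly fatal in the statement itself: the answer contains $-\frac1p\stirs{2n}{p-1}$, which is \emph{not} a function of $\alpha$, so no reduction that replaces every exponent by $\alpha+1$ can reach it. The paper's key tool, absent from your plan, is the exact convolution identity from \cite{GKP},
\[
\stirs{2n+1}{j+1}\binom{j+1}{p}=\sum_{l}\binom{2n+1}{l}\stirs{l}{p}\stirs{2n+1-l}{j+1-p},
\]
applied \emph{before} any reduction modulo $p$: the term $j=2i=p-1$ is kept symbolic and becomes $-\frac1p\stirs{2n}{p-1}$ via the Stirling recursion and Lemma \ref{vonlemma}; for $j\geq p$ the identity replaces the large-index Stirling numbers by products of ones with small enough arguments that only mod-$p$ congruences ($\stirs{l}{p}\equiv 1$ or $0$ according to $l\equiv 1\bmod{p-1}$, and the Kummer-type congruence of Lemma \ref{congruencestir} to pass to $\stirs{\alpha}{j+1}$) are ever needed; and it is the sole source of the $\binom{2n+1}{l}$ structure of the answer, which your expansion — producing only $\binom{j}{l}$'s — has no mechanism to generate. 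The same gap then propagates to your treatment of the extra cotangent sum over $R_{2n}^{(k)}$.
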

	
	\begin{proof}
		First, we see Theorem \ref{vonco} for $\co{n}{k}$. For simplicity, we put
		\[
		\dnkij=\frac{(-1)^j}{2^j}\frac{j!}{(2i+1)^k}\stirs{2n+1}{j+1}\binom{j+1}{2i+1}~~(~0\leq j\leq 2n, 0\leq i\leq\lfloor j/2\rfloor~).
		\] 
		By using this, we can write
		\begin{align}\label{codnkij}
		\co{n}{k}=\sum_{j=0}^{2n}\sum_{i=0}^{\lfloor j/2\rfloor} \dnkij.
		\end{align}
		Furthermore, we write $2i+1$ as $ap^e~(\gcd(a,p)=1, e\geq0)$. Note that 
		\[
		\ordp(\dnkij)\geq\ordp\left(\frac{j!}{(2i+1)^k}\right).
		\] 
		First, we assume $e=0$. Then $\dnkij\in\zetp$ holds and $p^{k-1}\dnkij\equiv0 \bmod p$ by the assumption $k\geq 2$.
		
		Next, we assume $e\geq 2$. By the same argument in \cite[$\S$14]{AIK}, we have
		\begin{align*}
			\ordp\left(\frac{j!}{(2i+1)^k}\right)&\geq-k+1,
		\end{align*}
		so we have $p^{k-1}\dnkij\in\zetp$ and $p^k\dnkij\equiv0\bmod p$. We see that $p^{k-1}\dnkij\not\equiv 0\bmod p$ holds for only the case $e=2, 2i+1=p^2, p=k+2$, and $\ordp(j!)=\ordp((2i)!)$. Namely, the condition $2i+1=p^2$ and $\ordp(j!)=\ordp((2i)!)$ holds only if $j=2i$ holds. On the other hand, the assumption $(p-1)\nmid 2n$ and Lemma \ref{vonlemma} imply 
		\[
		\stirs{2n}{j}=\stirs{2n}{p^2-1}\equiv0 \bmod p~~(~\mm{changing}~n~\mm{by}~2n~\mm{and}~a=p~),
		\]
		so we have
		\[
		\stirs{2n+1}{j+1}\equiv 0 \bmod p,
		\]
		which implies that $p^{k-1}\dnkij\equiv0 \bmod p$ holds.
		
		Now, we suppose $e=1~($namely $2i+1=ap)$. If $a\geq 3$, $\ordp\left(\frac{j!}{(2i+1)^k}\right)>-k+1$ since $p^2 \mid (ap-1)!$. The case $a=2$ never happens since $p$ is an odd prime. If $a=1~($namely $2i+1=p)$, then we have
		\[
		\dnkij=\frac{(-1)^jj!}{2^jp^k}\stirs{2n+1}{j+1}\binom{j+1}{p}.
		\]  
		If $j>2i$ holds, we have $\ordp(\dnkij)\geq -k+1$. If $j=2i$ holds, then we have
		\[
		\dnkij=\frac{(p-1)!}{2^{p-1}p^k}\stirs{2n+1}{p}.
		\]
		By using Lemma \ref{vonlemma} and the formula $\stirs{2n+1}{p}=\stirs{2n}{p-1}+p\stirs{2n}{p}$, we get the following results.
		\begin{enumerate}
			\item If $(p-1) \nmid 2n$, then we have $\stirs{2n+1}{p}\equiv 0 \bmod p$ and
			\[
			p^{k-1}\dnkij\equiv -\frac{1}{p}\stirs{2n+1}{p} \bmod p.
			\] 
			\item If $(p-1) \mid 2n$, then we have $\stirs{2n}{p-1}\equiv 1 \bmod p$ and
			\[
			p^k\dnkij\equiv -1 \bmod p.
			\]
		\end{enumerate}
		Hence by \eqref{codnkij}, we have the following.
		\begin{enumerate}
			\item If $(p-1) \mid 2n$, then we have
			\[
			p^{k}\co{2n}{k}\equiv -1 \bmod p.
			\] 
			\item If $(p-1) \nmid 2n$, then we have
			\[
			p^{k-1}\co{2n}{k}\equiv -\frac{1}{p}\stirs{2n+1}{p}+\sum_{j=p}^{2n} \frac{(-1)^j}{2^j}\frac{j!}{p}\stirs{2n+1}{j+1}\binom{j+1}{p} \bmod p.
			\]
		\end{enumerate}
		For the former term, we have
		\begin{align*}
			-\frac{1}{p}\stirs{2n+1}{p}&=-\frac{1}{p}\stirs{2n}{p-1}-\stirs{2n-1}{p-1}-p\stirs{2n-1}{p}\\
			&\equiv -\frac{1}{p}\stirs{2n}{p-1}\bmod p
		\end{align*} 
		by Lemma \ref{vonlemma}. For the latter sum, by using the formula 
		\[
		\stirs{n}{j+1}\binom{j+1}{p}=\sum_{k}^{}\stirs{k}{p}\stirs{n-k}{j+1-p}\binom{n}{k}
		\]
		in \cite{GKP}, we have
		\begin{align*}
			\sum_{j=p}^{2n} \frac{(-1)^jj!}{2^jp}\stirs{2n+1}{j+1}\binom{j+1}{p}&=\sum_{j=p}^{2n} \frac{(-1)^jj!}{2^jp}\sum_{l=j}^{2n}\stirs{l}{p}\stirs{2n+1-l}{j+1-p}\binom{2n+1}{l}.
	    \end{align*}
        Also, we have
        \begin{align*}
        	\stirs{l}{p}=\stirs{l-1}{p-1}+p\stirs{l-1}{p}&\equiv \begin{cases}
        		1\bmod{p}~(l-1\equiv0 \bmod{p-1}),\\
        		0\bmod{p}~(otherwise),
        	\end{cases}\\
            \stirs{2n+1-l}{j+1-p}&\equiv\stirs{2n+1-l}{j}\bmod{p}.
        \end{align*}
        Hence we have
        \begin{align*}
			\sum_{j=p}^{2n} \frac{(-1)^jj!}{2^jp}\sum_{l=j}^{2n}\stirs{l}{p}\stirs{2n+1-l}{j+1-p}\binom{2n+1}{l}&\equiv \sum_{\substack{l=p\\l\equiv1\bmod{p-1}}}^{2n}\binom{2n+1}{l}\sum_{j=0}^{l-p}\frac{(-1)^jj!}{2^{j+1}}\stirs{2n+1-l}{j+1}\\
			&\equiv \sum_{\substack{l=p\\l\equiv1\bmod{p-1}}}^{2n}\binom{2n+1}{l}\sum_{j=0}^{\alpha-1}\frac{(-1)^jj!}{2^{j+1}}\stirs{\alpha}{j+1} \bmod p.
		\end{align*}
		Hence we obtain the desired result for $\co{n}{k}$.
		For $\be{n}{k}$, it is sufficient to consider the case $e=1, a=1$ and $j=2i=p-1$. In this case, we have
		\[
		p^k\frac{(-1)^{p-1}j!}{2^{p-1}p^k}\frac{p(p+1)}{2}\stirs{2n}{p+1}\equiv0 \bmod p,
		\]
		so 
		\[
		p^k\be{2n}{k}\equiv p^k\co{2n}{k}\bmod p
		\]
		holds, and we obtain Theorem \ref{vonco}.
	\end{proof}
	
	\begin{remark}
		By using Proposition \ref{explicose} and Proposition \ref{explicota}, we have
		\[
		\ordp(\co{2n}{k}), ~~\ordp(\be{2n}{k}) \geq 0
		\]
		for an odd prime number $p$ with $2n+2\leq p$.
		Also, by using Proposition \ref{recurcose}, the formula \eqref{cotacose} and the fact that $\co{0}{k}=1$ for all $k\in\Z$, we have
		\[
		{\mm{ord}}_2(\co{2n}{k}), ~~{\mm{ord}}_2(\be{2n}{k}) \geq 0.
		\]
	\end{remark} 	
 
    \section{On symmetrized polycosecant numbers.}
    
    In \cite{KST}, symmetrized poly-Bernoulli numbers are defined by Kaneko, Sakurai and Tsumura as follows:
    
    \begin{definition}[{\cite[Section2]{KST}}]
    	For $l, m$ and $n\in\Z_{\geq0}$, symmetrized poly-Bernoulli numbers $\{\mathscr{B}_m^{(-l)}(n)\}$ are defined by 
    	\[
    	\mathscr{B}_m^{(-l)}(n)=\sum_{j=0}^{\infty} \stirf{n}{j}B_{m}^{(-l-j)}(n),
    	\]
    	where $B_{m}^{(k)}(x)$ is the poly-Bernoulli polynomial defined by
    	\[
    	e^{-xt}\frac{\mm{Li}_k(1-e^{-t})}{1-e^{-t}}=\sum_{n=0}^{\infty}B_{n}^{(k)}(x)\frac{t^n}{n!}~(k\in\Z).
    	\]
    \end{definition}

    It can be checked that
    \begin{align}
    \mathscr{B}_m^{(-l)}(0)=B_m^{(-l)},~~~\mathscr{B}_m^{(-l)}(1)=C_m^{(-l-1)}. \label{casesymber}
    \end{align}
    In particular, $\mathscr{B}_m^{(-l)}(n)$ satisfies the following duality formula.
    
    \begin{theorem}[{\cite[Corollary 2.2]{KST}\label{symber}}]
    	For $l, m$ and $n\in\Z_{\geq0}$, we have
    	\[
    	\mathscr{B}_m^{(-l)}(n)=\mathscr{B}_l^{(-m)}(n).
    	\]
    \end{theorem}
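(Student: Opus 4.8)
The plan is to prove the duality by showing that the two-variable generating function
\[
F(x,y)=\sum_{l,m=0}^{\infty}\mathscr{B}_m^{(-l)}(n)\,\frac{x^m}{m!}\frac{y^l}{l!}
\]
is a symmetric function of $x$ and $y$; comparing coefficients of $\frac{x^m}{m!}\frac{y^l}{l!}$ then yields $\mathscr{B}_m^{(-l)}(n)=\mathscr{B}_l^{(-m)}(n)$. Throughout I treat the series as formal (equivalently, work with $|x|,|y|$ small) so that the rearrangements below are legitimate.

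First I would insert the definition of $\mathscr{B}_m^{(-l)}(n)$ and perform the summations one index at a time. Summing over $m$ first, the defining generating function of the poly-Bernoulli polynomials gives
\[
\sum_{m=0}^{\infty}B_m^{(-l-j)}(n)\,\frac{x^m}{m!}=e^{-nx}\,\frac{\mm{Li}_{-l-j}(1-e^{-x})}{1-e^{-x}}.
\]
Next, writing $\mm{Li}_{-s}(z)=\sum_{k\geq1}k^{s}z^{k}$ and interchanging the sums, I would establish the shift identity
\[
\sum_{l=0}^{\infty}\mm{Li}_{-l-j}(z)\,\frac{y^l}{l!}=\sum_{k\geq1}k^{j}z^{k}e^{ky}=\mm{Li}_{-j}\!\left(ze^{y}\right),
\]
which disposes of the entire $l$-summation at once and replaces the argument $z=1-e^{-x}$ by $w:=(1-e^{-x})e^{y}$.

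The crux is to collapse the remaining sum over $j$. Since $\stirf{n}{j}=0$ for $j>n$, this is a finite sum, and using the standard generating-function identity $\sum_{j}\stirf{n}{j}x^{j}=x(x+1)\cdots(x+n-1)=n!\binom{x+n-1}{n}$ for the (unsigned) Stirling numbers of the first kind, I would compute
\[
\sum_{j=0}^{\infty}\stirf{n}{j}\mm{Li}_{-j}(w)=\sum_{k\geq1}\Bigl(n!\binom{k+n-1}{n}\Bigr)w^{k}=\frac{n!\,w}{(1-w)^{n+1}},
\]
the last step being the negative binomial series. Substituting this back and simplifying $\frac{e^{-nx}}{1-e^{-x}}\cdot\frac{n!\,w}{(1-w)^{n+1}}$ — using $w=(1-e^{-x})e^{y}$ and, with $u=e^{x}$, $v=e^{y}$, the identity $1-w=(u+v-uv)/u$ — I expect to reach the closed form
\[
F(x,y)=\frac{n!\,e^{x}e^{y}}{\bigl(e^{x}+e^{y}-e^{x}e^{y}\bigr)^{n+1}},
\]
which is manifestly invariant under the exchange $x\leftrightarrow y$. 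This gives $F(x,y)=F(y,x)$ and hence the theorem. As a consistency check, $n=0$ recovers the classical symmetric generating function $e^{x+y}/(e^{x}+e^{y}-e^{x+y})$ of the poly-Bernoulli numbers $B_m^{(-l)}$, in agreement with \eqref{casesymber}.

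The main obstacle is essentially bookkeeping: correctly handling the triple rearrangement of sums and the nested index shift in $\mm{Li}_{-l-j}$, together with justifying the formal interchanges. Once the shift identity and the rising-factorial evaluation are in place, the symmetry of the resulting expression is immediate and requires no further input.
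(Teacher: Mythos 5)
Your proposal is correct and takes essentially the same route as the paper: both deduce the duality from the symmetry under $x\leftrightarrow y$ of the two-variable generating function $\sum_{l,m\geq0}\mathscr{B}_m^{(-l)}(n)\frac{x^m}{m!}\frac{y^l}{l!}=\frac{n!\,e^{x+y}}{(e^x+e^y-e^{x+y})^{n+1}}$. The only difference is that the paper simply cites this closed form from \cite{KST}, whereas you derive it yourself --- and your derivation (poly-Bernoulli polynomial generating function, the shift identity $\sum_{l\geq0}\mm{Li}_{-l-j}(z)\frac{y^l}{l!}=\mm{Li}_{-j}(ze^y)$, and the rising-factorial evaluation $\sum_{j}\stirf{n}{j}k^{j}=k(k+1)\cdots(k+n-1)$) is sound and reproduces exactly the cited formula.
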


    Theorem \ref{symber} can be regarded as a generalization of duality formulas for $B_m^{(-l)}$ and $C_m^{(-l-1)}$ because of \eqref{casesymber}. This theorem can be shown by considering the following generating function in two variables.
    
    \begin{theorem}[{\cite[Theorem 2.1]{KST}}]
    	For $n\in\Z_{\geq0}$, we have
    	\[
    	\sum_{l=0}^{\infty}\sum_{m=0}^{\infty} \mathscr{B}_m^{(-l)}(n)\frac{x^l}{l!}\frac{y^m}{m!}=\frac{n!e^{x+y}}{(e^x+e^y-e^{x+y})^{n+1}}.
    	\]
    \end{theorem}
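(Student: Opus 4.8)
The plan is to compute the double exponential generating function on the left directly from the definitions and recognize the closed form. First I would substitute the definition $\mathscr{B}_m^{(-l)}(n)=\sum_{j\geq0}\stirf{n}{j}B_m^{(-l-j)}(n)$ into the left-hand side and carry out the sum over $m$ using the defining generating function of the poly-Bernoulli polynomials evaluated at the argument $n$, namely
\[
\sum_{m=0}^{\infty}B_m^{(-l-j)}(n)\frac{y^m}{m!}=e^{-ny}\frac{\mm{Li}_{-l-j}(1-e^{-y})}{1-e^{-y}}.
\]
Writing $z=1-e^{-y}$, this turns the target into $\dfrac{e^{-ny}}{z}\sum_{l\geq0}\sum_{j\geq0}\stirf{n}{j}\mm{Li}_{-l-j}(z)\dfrac{x^l}{l!}$, so the problem reduces to evaluating this double sum in closed form.

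The key step is the evaluation of that double sum. I would expand the polylogarithm through its defining series $\mm{Li}_{-p}(z)=\sum_{r\geq1}r^p z^r$ (valid for $|z|<1$, and in any event as an identity of formal power series in $x,y$). After interchanging the order of summation the factor $r^{l+j}$ splits, so the $l$-sum collapses to $\sum_{l\geq0}\frac{(xr)^l}{l!}=e^{xr}$ while the $j$-sum collapses to the rising factorial $\sum_{j=0}^{n}\stirf{n}{j}r^j=r(r+1)\cdots(r+n-1)$, by the standard property of the Stirling numbers of the first kind. Setting $w=ze^x$, the double sum becomes $\sum_{r\geq1}r(r+1)\cdots(r+n-1)\,w^r$.

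Next I would sum this series explicitly. Since $r(r+1)\cdots(r+n-1)=n!\binom{r+n-1}{n}$, a shift $r\mapsto r-1$ in the standard identity $\sum_{r\geq0}\binom{r+n}{n}w^r=(1-w)^{-(n+1)}$ gives $\sum_{r\geq1}r(r+1)\cdots(r+n-1)\,w^r=\dfrac{n!\,w}{(1-w)^{n+1}}$. Substituting back $w=ze^x$ and restoring the prefactor $e^{-ny}/z$, the double generating function becomes $\dfrac{n!\,e^{-ny}e^x}{(1-ze^x)^{n+1}}$. Finally, using $z=1-e^{-y}$ one has $1-ze^x=1-e^x+e^{x-y}$; multiplying numerator and denominator by $e^{(n+1)y}$ turns the denominator into $(e^x+e^y-e^{x+y})^{n+1}$ and the numerator into $n!\,e^{x+y}$, which is exactly the asserted formula.

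I expect the only delicate points to be the justification of the triple interchange of summation — best handled at the level of formal power series, or by absolute convergence for $x,y$ near $0$ — together with the bookkeeping in the rising-factorial collapse; everything else is routine algebra. As a bonus, the resulting expression is manifestly symmetric in $x$ and $y$, which is precisely the source of the duality recorded in Theorem \ref{symber}.
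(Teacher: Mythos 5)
Your proof is correct. Each step checks out: the $m$-sum is the defining generating function of $B_m^{(k)}(x)$ evaluated at $x=n$, $t=y$; the $j$-sum collapses via $\sum_{j=0}^{n}\stirf{n}{j}r^{j}=r(r+1)\cdots(r+n-1)$, which is exactly the property of the Stirling numbers of the first kind that the paper itself invokes to prove \eqref{cose}; the resummation $\sum_{r\geq1}r(r+1)\cdots(r+n-1)w^{r}=n!\,w/(1-w)^{n+1}$ and the back-substitution $w=ze^{x}$, $z=1-e^{-y}$ are routine; and the interchanges of summation are legitimate formal-power-series manipulations, since $z$ has no constant term in $y$ and hence each coefficient of $y^{M}$ involves only the finitely many terms with $r\leq M$.

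Be aware, though, that the paper contains no proof of this statement --- it is quoted from \cite{KST} --- so the meaningful comparison is with the paper's proof of its level two analogue, Theorem \ref{cosym}, and there the route is genuinely different from yours. In the paper, the index shift $l\mapsto l+j$ produced by the Stirling-weighted sum is traded for $y$-derivatives of the $n=0$ generating function (proof of Lemma \ref{gencosym}, which uses the closed form \eqref{genli} rather than the expanded series $\sum_{r\geq1}r^{p}z^{r}$), and the resulting operator sum $\sum_{j}\stirf{n}{j}\pdv{j}{y}f_{1,0}(t,y)$ is evaluated through an identity proved by induction on $n$ (Lemma \ref{fncosym}). Your argument eliminates both the differential operators and the induction by applying the rising-factorial identity pointwise in $r$ inside the Dirichlet-type series of the polylogarithm, followed by a single binomial resummation; this is more elementary and self-contained, it makes the $x\leftrightarrow y$ symmetry (hence the duality of Theorem \ref{symber}) visible at once, and it transfers verbatim to the polycosecant setting by treating the two summands $f_{1,n}$ and $f_{2,n}$ separately, so it would also yield an independent proof of Theorem \ref{cosym} and, by expanding the same geometric-type series in powers of $(e^t-1)(e^y-1)$, the explicit formula of Proposition \ref{expsym}. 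What the paper's method buys in exchange is a reusable structural statement: Lemma \ref{fncosym} expresses $f_{1,n}$ itself as a Stirling-weighted sum of derivatives of $f_{1,0}$, which is precisely the shape of the definition of the symmetrized numbers and mirrors how \cite{KST} organizes the original proof.
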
 

    The following proposition says that $\mathscr{B}_m^{(-l)}(n)\in\Z_{\geq1}$.
    
    \begin{proposition}[{\cite{KST}}]
    	For $l, m$ and $n\in\Z_{\geq0}$, we have
    	\[
    	\mathscr{B}_m^{(-l)}(n)=\sum_{j=0}^{\min{(l, m)}} n!(j!)^2\binom{j+n}{n}\stirs{l+1}{j+1}\stirs{m+1}{j+1}. 
    	\]
    \end{proposition}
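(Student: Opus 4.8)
The plan is to read off the coefficients directly from the generating function
\[
\sum_{l=0}^{\infty}\sum_{m=0}^{\infty}\mathscr{B}_m^{(-l)}(n)\frac{x^l}{l!}\frac{y^m}{m!}=\frac{n!\,e^{x+y}}{(e^x+e^y-e^{x+y})^{n+1}}
\]
established just above. The first step is to symmetrize the denominator: since $(e^x-1)(e^y-1)=e^{x+y}-e^x-e^y+1$, we have $e^x+e^y-e^{x+y}=1-(e^x-1)(e^y-1)$, so the right-hand side equals $n!\,e^x e^y\bigl(1-(e^x-1)(e^y-1)\bigr)^{-(n+1)}$.

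I would then expand the denominator with the generalized binomial series $\frac{1}{(1-u)^{n+1}}=\sum_{j\geq0}\binom{j+n}{n}u^j$, taking $u=(e^x-1)(e^y-1)$. Because $u$ factors into an $x$-part and a $y$-part, this separates the two variables:
\[
\frac{n!\,e^x e^y}{(1-u)^{n+1}}=n!\sum_{j=0}^{\infty}\binom{j+n}{n}\bigl(e^x(e^x-1)^j\bigr)\bigl(e^y(e^y-1)^j\bigr).
\]

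The key step is to expand each factor $e^x(e^x-1)^j$ into a Taylor series with Stirling-number coefficients. For this I would use the identity $e^x(e^x-1)^j=\frac{1}{j+1}\frac{d}{dx}(e^x-1)^{j+1}$ together with Lemma \ref{expstir}, which gives $(e^x-1)^{j+1}=(j+1)!\sum_l\stirs{l}{j+1}\frac{x^l}{l!}$; differentiating termwise and shifting the index yields
\[
e^x(e^x-1)^j=j!\sum_{l=0}^{\infty}\stirs{l+1}{j+1}\frac{x^l}{l!},
\]
and symmetrically for the $y$-factor. Substituting both expansions and extracting the coefficient of $\frac{x^l}{l!}\frac{y^m}{m!}$ gives
\[
\mathscr{B}_m^{(-l)}(n)=n!\sum_{j=0}^{\infty}\binom{j+n}{n}(j!)^2\stirs{l+1}{j+1}\stirs{m+1}{j+1}.
\]
Finally, since $\stirs{l+1}{j+1}=0$ whenever $j>l$ and $\stirs{m+1}{j+1}=0$ whenever $j>m$, the sum terminates at $j=\min(l,m)$, which is precisely the claimed formula. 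I expect no genuine obstacle here; the only step requiring a small trick is the differentiate-and-reindex identity for $e^x(e^x-1)^j$, while the remainder is bookkeeping in a coefficient comparison.
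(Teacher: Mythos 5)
Your proof is correct; the paper states this proposition without proof (citing \cite{KST}), but your argument---rewriting the denominator as $1-(e^x-1)(e^y-1)$, expanding by the binomial series $\frac{1}{(1-u)^{n+1}}=\sum_{j\geq0}\binom{j+n}{n}u^j$, and using $e^x(e^x-1)^j=j!\sum_{l}\stirs{l+1}{j+1}\frac{x^l}{l!}$---is precisely the method the paper itself uses to prove the level-two analogue, Proposition \ref{expsym}. The differentiate-and-reindex identity and the truncation of the sum at $j=\min(l,m)$ are both valid, so there is nothing to fix.
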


    Also, there are some studies on combinatorial properties of $\mathscr{B}_m^{(-l)}(n)$ (see \cite{BM}). In this section, we define symmetrized polycosecant numbers $\{\cosym{m}{-k}{n}\}$ and prove duality formulas for them.
    
    \begin{definition}
    	For $l, m$ and $n\in\Z_{\geq0}$, we define ~$\copoly{}{m}{-l}{n}$ by
    	\begin{align}
    		\frac{1}{2}(e^t+1)^{1-n}\frac{{\mm{Li}}_{-l}(\ta{})}{\sinh{t}}+\frac{1}{2}(e^{-t}+1)^{1-n}\frac{{\mm{Li}}_{-l}(-\ta{})}{\sinh{(-t)}}&=\sum_{m=0}^{\infty}\copoly{}{m}{-l}{n}\frac{t^m}{m!}
    	\end{align}
    	and symmetrized polycosecant numbers $\cosym{m}{-l}{n}$ by
    	\[
    	\cosym{m}{-l}{n}=\sum_{j=0}^{n}\stirf{n}{j}\copoly{}{m}{-l-j}{n}.
    	\]
    \end{definition}

    	Since the generating function is an even function, we have $\cosym{2m+1}{-l}{n}=0$ for $l, m$ and $n\in\Z_{\geq0}$. Also, when $n=1$, $\cosym{m}{-l}{1}=\frac{1}{2}\co{m}{-l-1}$. Hence we can regard $\cosym{m}{-l}{n}$ as a generalization of $\co{m}{-l}$. The duality formula for them is as follows.
    	
    \begin{theorem}\label{cosym}
    	For $l, m$ and $n\in\Z_{\geq0}$, we have
    	\[
    	\cosym{2m}{-2l}{n}=\cosym{2l}{-2m}{n}.
    	\]
    \end{theorem}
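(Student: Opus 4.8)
The plan is to mimic the proof of Theorem \ref{symber}: I will compute the two-variable generating function
\[
G_n(t,x)=\sum_{l=0}^{\infty}\sum_{m=0}^{\infty}\cosym{m}{-l}{n}\frac{t^m}{m!}\frac{x^l}{l!}
\]
in closed form and read the duality off its symmetry. Indeed $\cosym{2m}{-2l}{n}$ is the coefficient of $\frac{t^{2m}}{(2m)!}\frac{x^{2l}}{(2l)!}$ and $\cosym{2l}{-2m}{n}$ the coefficient of $\frac{t^{2l}}{(2l)!}\frac{x^{2m}}{(2m)!}$, so the theorem is equivalent to the assertion that the part of $G_n(t,x)$ that is even in both $t$ and $x$ is invariant under $t\leftrightarrow x$. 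The first ingredient is the analogue of the key computation in \cite{KST}: since ${\mm{Li}}_{-l}(z)=\sum_{r\geq1}r^lz^r$, summing against the Stirling numbers of the first kind and then over $l$ gives
\[
\sum_{l=0}^{\infty}\sum_{j=0}^{n}\stirf{n}{j}{\mm{Li}}_{-l-j}(z)\frac{x^l}{l!}=\sum_{r\geq1}\Bigl(\sum_{j}\stirf{n}{j}r^j\Bigr)(ze^x)^r=\sum_{r\geq1}r(r+1)\cdots(r+n-1)(ze^x)^r=\frac{n!\,ze^x}{(1-ze^x)^{n+1}},
\]
using the identity $\sum_j\stirf{n}{j}r^j=r(r+1)\cdots(r+n-1)$ and $\sum_{r\geq1}r(r+1)\cdots(r+n-1)w^r=\frac{n!w}{(1-w)^{n+1}}$.

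Next I would substitute this into the definition. Writing $u=\ta{}$ and using $\sinh(-t)=-\sinh t$, the defining generating function of $\copoly{}{m}{-l-j}{n}$ depends on the level only through ${\mm{Li}}_{-l-j}(\pm u)$, so applying the identity above with $z=u$ and $z=-u$ and carrying the $l$-independent factors $(e^{\pm t}+1)^{1-n}$ through the sums yields
\[
G_n(t,x)=\frac{n!}{2\sinh t}\left[(e^t+1)^{1-n}\frac{ue^x}{(1-ue^x)^{n+1}}+(e^{-t}+1)^{1-n}\frac{ue^x}{(1+ue^x)^{n+1}}\right].
\]
The simplification is where the real computation lies. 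Using $u=\frac{e^t-1}{e^t+1}$, the identities $\frac{u}{\sinh t}=\frac{2e^t}{(e^t+1)^2}$, $1\mp ue^x=\frac{1+e^t\pm e^x\mp e^{t+x}}{e^t+1}$, and $(e^{-t}+1)^{1-n}=(e^t+1)^{1-n}e^{(n-1)t}$, all powers of $(e^t+1)$ cancel and I expect to reach
\[
G_n(t,x)=n!\left[\frac{e^{t+x}}{(1+e^t+e^x-e^{t+x})^{n+1}}+\frac{e^{nt}e^x}{(1+e^t-e^x+e^{t+x})^{n+1}}\right].
\]

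The decisive observation is that, setting $A(t,x)=\frac{e^{t+x}}{(1+e^t+e^x-e^{t+x})^{n+1}}$, the second summand is exactly $A(-t,x)$ (clear $e^{(n+1)t}$ from numerator and denominator). Hence $G_n(t,x)=n!\bigl(A(t,x)+A(-t,x)\bigr)$, which makes the evenness in $t$ manifest, and, crucially, $A$ is symmetric: $A(t,x)=A(x,t)$. Since $G_n$ is already even in $t$, its even--even part is
\[
\tfrac12\bigl(G_n(t,x)+G_n(t,-x)\bigr)=\tfrac{n!}{2}\bigl(A(t,x)+A(-t,x)+A(t,-x)+A(-t,-x)\bigr),
\]
and the symmetry $A(t,x)=A(x,t)$ merely permutes these four terms among themselves under $t\leftrightarrow x$, so the expression is invariant; comparing coefficients gives $\cosym{2m}{-2l}{n}=\cosym{2l}{-2m}{n}$. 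I expect the main obstacle to be purely computational: deriving the closed form for $G_n$, in particular tracking the $(e^{\pm t}+1)^{1-n}$ factors and verifying the cancellation of all powers of $e^t+1$, together with confirming that the Stirling-number convention makes $\sum_j\stirf{n}{j}r^j$ the rising factorial. Once the identity that the second summand equals $A(-t,x)$ is in hand, the symmetry argument is immediate.
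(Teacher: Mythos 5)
Your proposal is correct, and its skeleton coincides with the paper's: both arguments compute the two-variable generating function of $\cosym{m}{-l}{n}$ in closed form --- your $G_n(t,x)=n!\bigl(A(t,x)+A(-t,x)\bigr)$ is precisely the paper's Lemma \ref{gencosym}, since $f_{1,n}(t,y)=n!A(t,y)$ and $f_{2,n}(t,y)=n!A(-t,y)$ --- and then conclude by taking the part even in the second variable and observing that the four terms $A(\pm t,\pm x)$ are permuted by $t\leftrightarrow x$ because $A$ is symmetric in its two arguments. Where you genuinely differ is in how the closed form is derived. The paper first proves Lemma \ref{fncosym} by induction on $n$, expressing $f_{i,n}$ through $y$-derivatives of $f_{i,0}$ weighted by $\stirf{n}{j}$, and then, in the proof of Lemma \ref{gencosym}, trades the sum $\sum_j\stirf{n}{j}{\mm{Li}}_{-l-j}$ for those derivative operators. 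You instead sum directly, using ${\mm{Li}}_{-l-j}(z)=\sum_{r\geq1}r^{l+j}z^r$ together with $\sum_j\stirf{n}{j}r^j=r(r+1)\cdots(r+n-1)$ and $\sum_{r\geq1}r(r+1)\cdots(r+n-1)w^r=\frac{n!\,w}{(1-w)^{n+1}}$; this collapses the paper's two lemmas into a single computation and avoids the induction entirely. Your worry about the Stirling convention is resolved affirmatively: in the convention of \cite{AIK} and \cite{KST} used here, $\stirf{n}{j}$ are exactly the coefficients of the rising factorial (this same fact is what makes Lemma \ref{fncosym} true), so your identity is the intended one. The costs of your route are the interchange of the sums over $l$, $j$, $r$ (harmless, and the same formal manipulation as the paper's \eqref{genli}) and the loss of the intermediate derivative formula of Lemma \ref{fncosym}, which has some independent interest; what it buys is a shorter, self-contained derivation, and your intermediate identities (the factor $\frac{u}{\sinh t}=\frac{2e^t}{(e^t+1)^2}$, the cancellation of all powers of $e^t+1$, and $f_{2,n}(t,x)=n!A(-t,x)$ after clearing $e^{(n+1)t}$) all check out.
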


    To prove this theorem, we need the following lemmas. For simplicity, we put
\begin{align*}
	f_{1,n}(t,y)&=\frac{n!e^{t+y}}{(1+e^t+e^y-e^{t+y})^{n+1}},\\
	f_{2,n}(t,y)&=\frac{n!e^{-t+y}}{(1+e^{-t}+e^y-e^{-t+y})^{n+1}}.\\
\end{align*}

\begin{lemma}\label{fncosym}
	For $n\in\Z_{\geq0}$, we have
	\begin{align*}
		f_{1,n}(t,y)&=(e^t+1)^{-n}\sum_{j=0}^{n}\stirf{n}{j}\pdv{j}{y}f_{1,0}(t,y),\\
		f_{2,n}(t,y)&=(e^{-t}+1)^{-n}\sum_{j=0}^{n}\stirf{n}{j}\pdv{j}{y}f_{2,0}(t,y).\\
	\end{align*}
\end{lemma}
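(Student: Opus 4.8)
The plan is to prove both identities by induction on $n$, after first isolating a first-order ``ladder'' relation that raises the index $n$ by one. Since $t$ enters only through $e^{\pm t}$ and the variable $y$ only through $u := e^y$ (with $\partial_y = u\,\partial_u$), I would fix $t$ and regard $f_{1,n}$ as a function of $u$, writing the common denominator as $D = A + Bu$ with $A = 1+e^t$ and $B = 1-e^t$, so that $f_{1,n} = n!\,e^t u\,(A+Bu)^{-(n+1)}$. The key structural observation is that $A = e^t+1$ is independent of $y$, hence multiplication by any power of $A$ commutes with $\partial_y$; this is exactly what will let the prefactor $(e^t+1)^{-n}$ pass through the derivatives appearing in the statement.

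The heart of the argument is the ladder relation
\[
(e^t+1)\,f_{1,n} = \left(\pdv{}{y} + (n-1)\right) f_{1,n-1}.
\]
To obtain it I would compute $\partial_y f_{1,n-1} = u\,\partial_u f_{1,n-1}$ directly: differentiating $(n-1)!\,e^t u\,(A+Bu)^{-n}$ produces a factor $A - (n-1)Bu$, which I rewrite using $Bu = D - A$ as $nA - (n-1)D$. Splitting along this decomposition, the $nA$-part reassembles into $A f_{1,n}$, while the $(n-1)D$-part collapses (since $D\cdot(A+Bu)^{-(n+1)} = (A+Bu)^{-n}$) into $(n-1)f_{1,n-1}$, giving precisely the displayed relation.

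With the ladder relation in hand the induction is routine. The base case $n=0$ is the tautology $f_{1,0} = f_{1,0}$. For the inductive step I would substitute the hypothesis $f_{1,n-1} = (e^t+1)^{-(n-1)}\sum_{j=0}^{n-1}\stirf{n-1}{j}\partial_y^j f_{1,0}$ into the ladder relation, commute the $y$-independent factor $(e^t+1)^{-(n-1)}$ past $\partial_y$, and divide by $e^t+1$. Collecting the coefficient of $\partial_y^k f_{1,0}$ then yields $\stirf{n-1}{k-1} + (n-1)\stirf{n-1}{k}$, which equals $\stirf{n}{k}$ by the recurrence $\stirf{n}{k} = \stirf{n-1}{k-1}+(n-1)\stirf{n-1}{k}$ for the Stirling numbers of the first kind. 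This establishes the first identity, and the second (for $f_{2,n}$) follows verbatim upon replacing $t$ by $-t$ throughout, so that $A$ becomes $e^{-t}+1$.

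I expect the only real obstacle to be the clean derivation of the ladder relation, specifically the rearrangement $A-(n-1)Bu = nA-(n-1)D$ that forces the derivative of $f_{1,n-1}$ to split into exactly a multiple of $f_{1,n}$ plus a multiple of $f_{1,n-1}$. Everything afterward—the induction and the Stirling-number bookkeeping—is mechanical.
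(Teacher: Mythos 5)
Your proof is correct and takes essentially the same route as the paper: both arguments hinge on the first-order ladder relation $(e^t+1)f_{1,n+1}(t,y)=\left(\pdv{}{y}+n\right)f_{1,n}(t,y)$ (yours is the identical identity with the index shifted by one) followed by induction on $n$, commuting the $y$-independent factor $(e^t+1)^{-n}$ past $\pdv{}{y}$ and absorbing the two terms via the recurrence $\stirf{n}{k}=\stirf{n-1}{k-1}+(n-1)\stirf{n-1}{k}$ for Stirling numbers of the first kind. If anything, your write-up is more explicit than the paper's, which asserts the ladder relation as ``a direct calculation'' and leaves the Stirling bookkeeping implicit.
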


\begin{lemma}\label{gencosym}
	For $n\in\Z_{\geq0}$, we have
	\begin{align*}
		f_{1,n}(t,y)+f_{2,n}(t,y)=\sum_{m=0}^{\infty}\sum_{l=0}^{\infty}\copasym{}{m}{-l}{n}\frac{t^m}{m!}\frac{y^l}{l!}.
	\end{align*}
\end{lemma}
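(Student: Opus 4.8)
The plan is to pass to the two-variable exponential generating function $H_n(t,y):=\sum_{m,l\ge0}\cosym{m}{-l}{n}\frac{t^m}{m!}\frac{y^l}{l!}$ of the symmetrized numbers and to identify it with $f_{1,n}+f_{2,n}$ by means of Lemma \ref{fncosym}. First I would record the generating function of the \emph{unsymmetrized} numbers, $G_n(t,y):=\sum_{m,l\ge0}\copoly{}{m}{-l}{n}\frac{t^m}{m!}\frac{y^l}{l!}$. Because an exponential generating function in $y$ converts the index shift $l\mapsto l+j$ into the derivative $\pdv{j}{y}$, the defining relation $\cosym{m}{-l}{n}=\sum_{j=0}^{n}\stirf{n}{j}\copoly{}{m}{-l-j}{n}$ translates directly into
\[
H_n(t,y)=\sum_{j=0}^{n}\stirf{n}{j}\,\pdv{j}{y}G_n(t,y).
\]

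Next I would put $G_n$ in closed form. Starting from the defining series of $\copoly{}{m}{-l}{n}$, I multiply by $y^l/l!$ and sum over $l\ge0$ using the elementary identity $\sum_{l\ge0}\mm{Li}_{-l}(z)\frac{y^l}{l!}=\sum_{r\ge1}z^re^{ry}=\frac{ze^y}{1-ze^y}$, evaluated at $z=\ta{}$ and at $z=-\ta{}$. With $\ta{}=\frac{e^t-1}{e^t+1}$ and $-\ta{}=\tanh(-t/2)=\frac{e^{-t}-1}{e^{-t}+1}$ this gives
\[
G_n(t,y)=\frac{(e^t+1)^{1-n}}{2\sinh t}\cdot\frac{(e^t-1)e^y}{1+e^t+e^y-e^{t+y}}-\frac{(e^{-t}+1)^{1-n}}{2\sinh t}\cdot\frac{(e^{-t}-1)e^y}{1+e^{-t}+e^y-e^{-t+y}}.
\]
Now I use the factorizations $2\sinh t=(e^t-1)(e^{-t}+1)=-(e^{-t}-1)(e^t+1)$, so that $\frac{e^t-1}{2\sinh t}=\frac{1}{e^{-t}+1}$ and $\frac{e^{-t}-1}{2\sinh t}=\frac{-1}{e^t+1}$; together with $\frac{1}{e^{-t}+1}=\frac{e^t}{e^t+1}$ and $\frac{1}{e^t+1}=\frac{e^{-t}}{e^{-t}+1}$ the two quotients collapse and I expect to reach
\[
G_n(t,y)=(e^t+1)^{-n}f_{1,0}(t,y)+(e^{-t}+1)^{-n}f_{2,0}(t,y).
\]

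Finally, the weights $(e^t+1)^{-n}$ and $(e^{-t}+1)^{-n}$ do not involve $y$, so they commute with every $\pdv{j}{y}$. Applying $\sum_{j=0}^{n}\stirf{n}{j}\pdv{j}{y}$ to the last display therefore gives
\[
H_n(t,y)=(e^t+1)^{-n}\sum_{j=0}^{n}\stirf{n}{j}\pdv{j}{y}f_{1,0}(t,y)+(e^{-t}+1)^{-n}\sum_{j=0}^{n}\stirf{n}{j}\pdv{j}{y}f_{2,0}(t,y),
\]
and Lemma \ref{fncosym} identifies the two sums as $f_{1,n}$ and $f_{2,n}$, which is the assertion of Lemma \ref{gencosym}.

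The routine parts are the exponential generating function of $\mm{Li}_{-l}$ and the final commutation of the $y$-free weights past $\pdv{j}{y}$; the one step that needs care is the middle simplification of $G_n$, where one must keep track of the two $\mm{Li}_{-l}(\pm\ta{})$ series and check that the $2\sinh t$ in the denominator cancels exactly against the factors $e^{\pm t}-1$ so as to reproduce $f_{1,0}$ and $f_{2,0}$ with the correct powers $(e^{\pm t}+1)^{-n}$. I expect this bookkeeping, rather than any conceptual difficulty, to be the main obstacle.
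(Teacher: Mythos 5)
Your proof is correct, and it rests on the same two pillars as the paper's: the identity $\sum_{l\geq0}\mm{Li}_{-l}(z)\frac{y^l}{l!}=\frac{ze^y}{1-ze^y}$ and Lemma \ref{fncosym}, applied separately to the $f_{1,n}$ and $f_{2,n}$ halves. The genuine difference is in how the index shift $l\mapsto l+j$ and the factor $1/\sinh t$ are processed, and your route is cleaner. The paper realizes the shift as $\pdv{j-1}{y}$ composed with one $t$-derivative, using the chain-rule identity $\pdv{}{t}\mm{Li}_{-k}(\ta{})=\frac{1}{\sinh t}\mm{Li}_{-k-1}(\ta{})$; this forces a separate treatment of $n=0$ (where no $j\geq1$ term exists) and, after inserting the closed form, requires the further step of recognizing $(e^t+1)^{1-n}\frac{e^{t+y}}{(1+e^t+e^y-e^{t+y})^2}$ as $\pdv{}{y}\bigl[(e^t+1)^{-n}f_{1,0}(t,y)\bigr]$ before Lemma \ref{fncosym} can be invoked. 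You instead compute the unsymmetrized generating function $G_n$ in closed form once and for all --- your intermediate identity $G_n=(e^t+1)^{-n}f_{1,0}+(e^{-t}+1)^{-n}f_{2,0}$ checks out, with $1/\sinh t$ cancelled via the factorizations $2\sinh t=(e^t-1)(e^{-t}+1)=-(e^{-t}-1)(e^t+1)$ --- and only then apply the full operator $\sum_{j=0}^{n}\stirf{n}{j}\pdv{j}{y}$, which passes through the $y$-free weights. This buys an argument that is uniform in $n$ (no $n=0$ versus $n\geq1$ case split) and replaces the paper's differential devices by elementary rational-function algebra, at the mild cost of introducing and simplifying one auxiliary object that the paper never names.
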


From Lemmas \ref{fncosym} and \ref{gencosym}, and by $\cosym{2m+1}{-l}{n}=0$, we have

\begin{align*}
	2\sum_{m=0}^{\infty}\sum_{l=0}^{\infty}\cosym{2m}{-2l}{n}\frac{t^{2m}}{(2m)!}\frac{y^{2l}}{(2l)!}&=f_{1,n}(t,y)+f_{2,n}(t,y)+f_{1,n}(t,-y)+f_{2,n}(t,-y)\\
	&=\frac{n!e^{t+y}}{(1+e^t+e^y-e^{t+y})^{n+1}}+\frac{n!e^{-t+y}}{(1+e^{-t}+e^y-e^{-t+y})^{n+1}}\\
	&\five+\frac{n!e^{t-y}}{(1+e^t+e^{-y}-e^{t-y})^{n+1}}+\frac{n!e^{-t-y}}{(1+e^{-t}+e^{-y}-e^{-t-y})^{n+1}}.
\end{align*}

Hence we obtain Theorem \ref{cosym}.

\begin{proof}[Proof of Lemma \ref{fncosym}]
	We prove the result $f_{1,n}(t,y)$ by induction on $n$. The case $n=0$ is straightforward.
	
	For $n\geq1$, a direct calculation shows that
	\begin{align*}
		\pdv{}{y}f_{1,n}(t,y)&=(e^t+1)f_{1,n+1}(t,y)-nf_{1,n}(t,y).
	\end{align*}
    By the induction hypothesis, we have
	\begin{align*}
		f_{1,n+1}(t,y)&=(e^t+1)^{-1}n!e^{-ny}e^t\pdv{}{y}\frac{1}{(1+e^{-y}+e^{t-y}-e^t)^{n+1}}\\
		&=\frac{(n+1)!e^{t+y}}{(1+e^t+e^y-e^{t+y})^{n+1}}.
	\end{align*}
    By the similar argument, we obtain the result for $f_{2,n}(t,y)$.
\end{proof}

\begin{proof}[Proof of Lemma \ref{gencosym}]
	Since
	\begin{align*}
		\sum_{m=0}^{\infty}\sum_{l=0}^{\infty}\sum_{j=0}^{n}&\stirf{n}{j}\copoly{}{m}{-l-j}{n}\frac{t^m}{m!}\frac{y^l}{l!}\\
		=&\frac{1}{2}\sum_{j=0}^{n}\stirf{n}{j}\sum_{l=0}^{\infty}(e^t+1)^{1-n}\frac{{\mm{Li}}_{-l-j}(\ta{})}{\sinh{t}}\frac{y^l}{l!}\\
		&+\frac{1}{2}\sum_{j=0}^{n}\stirf{n}{j}\sum_{l=0}^{\infty}(e^{-t}+1)^{1-n}\frac{{\mm{Li}}_{-l-j}(-\ta{})}{\sinh{(-t)}}\frac{y^l}{l!},
	\end{align*}
	we aim to prove that $f_{1,n}(t,y)$ coincides with the former sum and $f_{2,n}(t,y)$ coincides with the latter sum. In particular, since the argument for $f_{2,n}(t,y)$ is the same as that for $f_{1,n}(t,y)$, we show the result for $f_{1,n}(t,y)$. When $n=0$, we have
	\begin{align}
	    \sum_{l=0}^{\infty}{\mm{Li}}_{-l}(z)\frac{y^l}{l!}=\sum_{l=0}^{\infty}\sum_{m=0}^{\infty}m^lz^m\frac{y^l}{l!}=\frac{e^yz}{1-e^yz}~(|z|<1).\label{genli}
	\end{align}
    Hence we have
	\begin{align*}
		\sum_{l=0}^{\infty}\frac{1}{2}(e^t+1)\frac{\mm{Li}_{-l}(\ta{})}{\sinh{t}}\frac{y^l}{l!}&=\frac{1}{2}(e^t+1)\frac{1}{\sinh{t}}\frac{e^y\ta{}}{1-e^y\ta{}}\\
		&=\frac{e^{t+y}}{1+e^t+e^y-e^{t+y}}\\
		&=f_{1,0}(t,y)
	\end{align*}
	
	For $n\geq1$, by using \eqref{genli}, we have
	\begin{align*}
		&\frac{1}{2}\sum_{j=0}^{n}\stirf{n}{j}\sum_{l=0}^{\infty}(e^t+1)^{1-n}\frac{{\mm{Li}}_{-l-j}(\ta{})}{\sinh{t}}\frac{y^l}{l!}\\
		&=\frac{1}{2}\sum_{j=1}^{n}\stirf{n}{j}\pdv{j-1}{y}\sum_{k=0}^{\infty}(e^t+1)^{1-n}\pdv{}{t}{\mm{Li}}_{-k}(\ta{})\frac{y^k}{k!}\\
		&=\frac{1}{2}\sum_{j=1}^{n}\stirf{n}{j}\pdv{j-1}{y}(e^t+1)^{1-n}\times\pdv{}{t}\frac{e^y\ta{}}{1-e^y\ta{}}\\
		&=\sum_{j=1}^{n}\stirf{n}{j}\pdv{j-1}{y}(e^t+1)^{1-n}\frac{e^{t+y}}{(1+e^t+e^y-e^{t+y})^2}\\
		&=\sum_{j=1}^{n}\stirf{n}{j}\pdv{j}{y}(e^t+1)^{-n}\frac{e^{t+y}}{1+e^t+e^y-e^{t+y}}\\
		&=f_{1,n}(t,y).
	\end{align*}
\end{proof}

From Lemma \ref{gencosym}, we obtain an explicit formula of $\cosym{m}{-l}{n}$.

    \begin{proposition}\label{expsym}
	For $l, m$ and $n\in\Z_{\geq0}$, we have
	\[
	\cosym{2m}{-l}{n}=\frac{n!}{2^{n+1}}\sum_{j=0}^{\min{\{2m, l\}}} \frac{(j!)^2}{2^{j-1}} \binom{j+n}{n}\stirs{2m+1}{j+1}\stirs{l+1}{j+1}.
	\]
	Hence we obtain $\frac{2^{n+1}}{n!}\cosym{2m}{-l}{n}\in\Z_{\geq0}$.
\end{proposition}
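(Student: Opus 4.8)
The plan is to read the desired coefficient directly off the two–variable generating function of Lemma \ref{gencosym}, imitating the symmetrized poly-Bernoulli computation of \cite{KST} (itself patterned on \cite[Theorem 14.14]{AIK}). By Lemma \ref{gencosym}, $\cosym{2m}{-l}{n}$ is the coefficient of $\frac{t^{2m}}{(2m)!}\frac{y^l}{l!}$ in $f_{1,n}(t,y)+f_{2,n}(t,y)$. From the definitions one checks $f_{2,n}(t,y)=f_{1,n}(-t,y)$, so this coefficient equals $2a_{2m,l}$, where $a_{r,l}$ denotes the coefficient of $\frac{t^r}{r!}\frac{y^l}{l!}$ in $f_{1,n}(t,y)$ alone. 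The whole task thus reduces to computing $a_{r,l}$ and then specializing $r=2m$.

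The next step is the level-two substitution $u=e^t-1$, $v=e^y-1$. A short computation gives $1+e^t+e^y-e^{t+y}=2-uv$ and $e^{t+y}=(1+u)(1+v)$, whence
\[
f_{1,n}(t,y)=\frac{n!\,(1+u)(1+v)}{2^{n+1}\left(1-\tfrac{uv}{2}\right)^{n+1}}=\frac{n!}{2^{n+1}}(1+u+v+uv)\sum_{j=0}^{\infty}\binom{j+n}{n}\frac{(uv)^j}{2^j}.
\]
Expanding $(1+u+v+uv)(uv)^j=u^jv^j+u^{j+1}v^j+u^jv^{j+1}+u^{j+1}v^{j+1}$ and reading off coefficients via Lemma \ref{expstir} (the coefficient of $\frac{t^r}{r!}$ in $(e^t-1)^p$ is $p!\stirs{r}{p}$) presents $a_{r,l}$ as $\frac{n!}{2^{n+1}}$ times a sum over $j$ of four Stirling products weighted by $\binom{j+n}{n}2^{-j}$.

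The key step, and the only real obstacle, is to collapse those four terms into a single product. Applying the Stirling recurrence $\stirs{r+1}{j+1}=\stirs{r}{j}+(j+1)\stirs{r}{j+1}$ in each variable and multiplying out shows
\begin{align*}
(j!)^2\stirs{r+1}{j+1}\stirs{l+1}{j+1}
&=(j!)^2\stirs{r}{j}\stirs{l}{j}+j!(j+1)!\stirs{r}{j+1}\stirs{l}{j}\\
&\quad+j!(j+1)!\stirs{r}{j}\stirs{l}{j+1}+\bigl((j+1)!\bigr)^2\stirs{r}{j+1}\stirs{l}{j+1},
\end{align*}
which is exactly the bracketed expression above; everything else is bookkeeping. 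Hence $a_{r,l}=\frac{n!}{2^{n+1}}\sum_{j}\binom{j+n}{n}\frac{(j!)^2}{2^j}\stirs{r+1}{j+1}\stirs{l+1}{j+1}$, and setting $r=2m$ together with $\cosym{2m}{-l}{n}=2a_{2m,l}$ yields the stated formula, the range $0\le j\le\min\{2m,l\}$ arising from the vanishing of $\stirs{2m+1}{j+1}$ for $j>2m$ and of $\stirs{l+1}{j+1}$ for $j>l$. For the integrality claim I would note that in $\frac{2^{n+1}}{n!}\cosym{2m}{-l}{n}=\sum_j\binom{j+n}{n}\frac{(j!)^2}{2^{j-1}}\stirs{2m+1}{j+1}\stirs{l+1}{j+1}$ every factor is a non-negative integer once one verifies $\frac{(j!)^2}{2^{j-1}}\in\Z$, i.e. $2\,{\mm{ord}}_2(j!)\ge j-1$, which holds for all $j\ge0$; so the sum is a non-negative integer.
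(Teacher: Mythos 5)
Your proposal is correct and takes essentially the same route as the paper: both rewrite $f_{1,n}(t,y)=\frac{n!}{2^{n+1}}e^{t+y}\left(1-\tfrac{1}{2}(e^t-1)(e^y-1)\right)^{-(n+1)}$, expand by the binomial series $\left(\frac{1}{1-z}\right)^{n+1}=\sum_{j\ge0}\binom{j+n}{n}z^j$, read off coefficients through Lemma \ref{expstir}, and conclude via Lemma \ref{gencosym}. The only differences are bookkeeping ones that the paper leaves implicit: you spell out the factor $2$ coming from $f_{2,n}(t,y)=f_{1,n}(-t,y)$ at even $t$-degree, you re-derive the coefficient identity $j!\stirs{r+1}{j+1}=j!\stirs{r}{j}+(j+1)!\stirs{r}{j+1}$ (i.e.\ the expansion of $e^t(e^t-1)^j$) from the Stirling recurrence rather than quoting it, and you verify the $2$-adic integrality of $\frac{(j!)^2}{2^{j-1}}$ needed for the final claim.
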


    \begin{proof}
	Since we have
	\begin{align*}
		\left(\frac{1}{1-z}\right)^{n+1}=\sum_{j=0}^{\infty} \binom{j+n}{n}z^j,
	\end{align*}
    a direct calculation shows that
	\begin{align*}
		\frac{n!e^{t+y}}{(1+e^t+e^y-e^{t+y})^{n+1}}
		&=\frac{n!}{2^{n+1}}e^{t+y}\left(\frac{1}{1-\frac{1}{2}(e^t-1)(e^y-1)}\right)^{n+1}\\
		&=\frac{n!}{2^{n+1}}e^{t+y}\sum_{j=0}^{\infty} \binom{j+n}{n}\frac{1}{2^j}(e^t-1)^j(e^y-1)^j\\
		&=\frac{n!}{2^{n+1}}\sum_{m=j}^{\infty}\sum_{l=j}^{\infty}\left(\sum_{j=0}^{\min{\{m, l\}}} \frac{(j!)^2}{2^{j}}\binom{j+n}{n}\stirs{m+1}{j+1}\stirs{l+1}{j+1}\right)\frac{t^m}{m!}\frac{y^l}{l!}.
	\end{align*}
    By Lemma \ref{gencosym}, we obtain Proposition \ref{expsym}.
\end{proof}

\begin{example}
	When $n=1$, Theorem \ref{cosym} implies
	\[
	\co{2m}{-2l-1}=\co{2l}{-2m-1}~( l, m\in\Z_{\geq0} ).
	\]
	\end{example}

\begin{example}
	
	When $n=0$, from Propositions \ref{sasaki}, \ref{stircota} and \ref{expsym}, we have
	\begin{align*}
	1+\frac{e^{t+y}}{1+e^t+e^y-e^{t+y}}&+\frac{e^{-t+y}}{1+e^{-t}+e^y-e^{-t+y}}+\frac{e^{t-y}}{1+e^t+e^{-y}-e^{t-y}}+\frac{e^{-t-y}}{1+e^{-t}+e^{-y}-e^{-t-y}}\\
	&=\sum_{m=0}^{\infty}\sum_{l=0}^{\infty} \left(\be{2m}{-2l}+\co{2m}{-2l}+\co{2l}{-2m}\right)\frac{t^{2m}}{(2m)!}\frac{y^{2l}}{(2l)!}.
	\end{align*}
    Hence Theorem \ref{cosym} implies
    \begin{align*}
    	\be{2m}{-2l}+\co{2m}{-2l}+\co{2l}{-2m}&=\be{2l}{-2m}+\co{2l}{-2m}+\co{2m}{-2l}~(l,m\in\Z_{\geq 0}).
    \end{align*}
    Therefore we have
    \[
    \be{2m}{-2l}=\be{2l}{-2m}.
    \]
    \end{example}

\begin{example}
    
    When $n=2$, Theorem \ref{cosym} implies
    \[
    \sum_{j=0}^{2m} \binom{2m}{j}E_{j}(0)\left(\widetilde{D}_{2m-j}^{(-2l-1)}+\widetilde{D}_{2m-j}^{(-2l-2)}\right)=\sum_{j=0}^{2l} \binom{2l}{j}E_{j}(0)\left(\widetilde{D}_{2l-j}^{(-2m-1)}+\widetilde{D}_{2l-j}^{(-2m-2)}\right),
    \]
    where
    \[
    \frac{2e^{xt}}{e^t+1}=\sum_{m=0}^{\infty} E_m(x)\frac{t^m}{m!},\five
    \frac{{\mm{Li}}_{-k}(\ta{})}{\sinh{t}}=\sum_{m=0}^{\infty} \widetilde{D}_m^{(-k)}\frac{t^m}{m!}.
    \]
\end{example}

	\section*{Acknowledgments}
	The author would like to thank my supervisor Professor Hirofumi Tsumura for his kind advice and helpful comments. The author also thanks Professor Masanobu Kaneko for his important suggestions. This work was supported by JST, the establishment of university fellowships towards the creation of science technology innovation, Grant Number JPMJFS2139.

	\begin{bibdiv}
		\begin{biblist}
			\bib{AIK}{book}{
				author={T. Arakawa},author={T. Ibukiyama},author={M. Kaneko. with appendix by Don Zagier},
				title={Bernoulli numbers and Zeta Functions},
				publisher={Springer Monographs in Mathematics, Springer, Tokyo},
				date={2014}
			}
			\bib{AK1}{article}{
				author={T. Arakawa},author={M. Kaneko},
				title={Multiple zeta values, poly-Bernoulli numbers, and related zeta functions},
				journal={Nagoya Math. J.},
				volume={153},
				date={1999},
				number={},
				pages={189--209},
				issn={},
			}
			\bib{BM}{article}{
				author={B. B\'enyi},author={T. Matsusaka},
				title={On the combinatorics of symmetrized poly-Bernoulli numbers},
				journal={Electron. J. Combin},
				volume={28},
				date={2021},
				number={1},
				pages={20pp},
				issn={},
			}
			\bib{GKP}{book}{
				author={R. L. Graham},author={D. E. Knuth},author={O. Patashnik},
				title={Concrete mathematics},
				publisher={Addison-Wesley},
				date={1994},
			}
			\bib{IKT}{article}{
				author={K. Imatomi},author={M. Kaneko},author={E. Takeda},
				title={Multi-poly-Bernoulli numbers and finite multiple zeta values},
				journal={J. Integer Seq.},
				volume={17},
				date={2014},
				number={4},
				pages={12pp},
				issn={},
			}
			\bib{K1}{article}{
				author={M. Kaneko},
				title={Poly-Bernoulli numbers},
				journal={J. Th\'eor. Nombres Bordeaux},
				volume={9},
				date={1997},
				number={1},
				pages={199--206},
				issn={},
			}
           \bib{KKT}{article}{
           author={M. Kaneko},author={Y. Komori},author={H. Tsumura},
           title={On Arakawa-Kaneko zeta-functions associated with $GL_2(\C)$ and their functional relations II},
           journal={in preparation},
           volume={},
           date={},
           number={},
           pages={},
           issn={},
           }
			\bib{KPT}{article}{
				author={M. Kaneko},author={M. Pallewatta},author= {H. Tsumura},
				title={On polycosecant numbers},
				journal={J. Integer Seq.},
				volume={23},
				date={2020},
				number={6},
				pages={17pp},
				issn={},
			}
		    \bib{KST}{article}{
		    	author={M. Kaneko},author={F. Sakurai},author={H. Tsumura},
		    	title={On a duality formula for certain sums of values of poly-Bernoulli polynomials and its application},
		    	journal={J. Th\'eor. Nombres Bordeaux},
		    	volume={30},
		    	date={2018},
		    	number={1},
		    	pages={203--218},
		    	issn={},
		    }
			\bib{Ka}{article}{
				author={Y. Katagiri},
				title={Kummer-type congruences for multi-poly-Bernoulli numbers},
				journal={Commentarii mathematici Universitatis Sancti Pauli},
				volume={69},
				date={2021},
				number={},
				pages={75--85},
				issn={},
			}
			\bib{Ki}{thesis}{
				author={R. Kitahara},
				title={On Kummer-type congruences for poly-Bernoulli numbers~(in Japanese)},
				language={},
				organization={Tohoku University, master thesis~(2012)},
				volume={},
				date={},
				number={},
				pages={},
				issn={},
			}
			\bib{No}{book}{
				author={N. E. N$\mm{\ddot{o}}$rlund},
				title={Vorlesungen $\ddot{u}$ber Differenzenrechnung},
				publisher={Springer Verlag},
				date={1924},
			}
			\bib{Pa}{thesis}{
				author={M. Pallewatta},
				title={On polycosecant numbers and level Two generalization of Arakawa-Kaneko zeta functions},
				language={},
				organization={Kyushu University, doctor thesis~(2020)},
				volume={},
				date={},
				number={},
				pages={52pp},
				issn={},
			}
		    \bib{OS}{article}{
		    	author={Y. Ohno},author={M. Sakata},
		    	title={On certain properties of poly-Bernoulli numbers with negative index},
		    	journal={J. School sci. Eng.},
		    	volume={49},
		    	date={2013},
		    	number={},
		    	pages={5--7},
		    	issn={},
		    }
			\bib{OS2}{article}{
				author={Y. Ohno},author={Y. Sasaki},
				title={Recursion formulas for poly-Bernoulli numbers and their applications},
				journal={Int. J. Number Theory},
				volume={17},
				date={2021},
				number={1},
				pages={175--189},
				issn={},
			}
			\bib{Sa}{thesis}{
				author={M. Sakata},
				title={On p-adic properties of poly-Bernoulli numbers~(in Japanese)},
				language={},
				organization={Kindai University, master thesis~(2012)},
				volume={},
				date={},
				number={},
				pages={},
				issn={},
			}
			\bib{Sasaki}{article}{
				author={Y. Sasaki},
				title={On generalized poly-Bernoulli numbers and related L-functions},
				journal={Journal of Number Theory},
				volume={132},
				date={2012},
				number={},
				pages={156--170},
				issn={},
			}
		\end{biblist}
	\end{bibdiv}
	
\end{document}